\newcommand{\real}{{\mathbb{R}}}
\newcommand{\CC}{\mathcal{C}}
\newcommand{\GG}{\mathcal{G}}
\newcommand{\LL}{\mathcal{L}}
\newcommand{\MM}{\mathcal{M}}
\newcommand{\NN}{\mathcal{N}}
\newcommand{\PP}{\mathcal{P}}
\newcommand{\TT}{\mathcal{T}}
\newcommand{\UU}{\mathcal{U}}
\newcommand{\subj}{\text{subj. to}}
\newcommand{\qxi}{\phi} 
\newcommand{\xb}{\textbf{x}} 
\newcommand{\feqmfd}{\ell} 
\newcommand{\xeqmfd}{\eta} 
\newcommand{\xrm}{\textrm{x}} 
\newcommand{\urm}{\textrm{u}}
\newtheorem{theorem}{Theorem}[section]
\newtheorem{proposition}[theorem]{Proposition}
\newtheorem{lemma}[theorem]{Lemma}
\newtheorem{remark}[theorem]{Remark}
\newcommand\oprocendsymbol{\hbox{$\square$}}
\newcommand\oprocend{\relax\ifmmode\else\unskip\hfill\fi\oprocendsymbol}
\begin{document}

\title{Optimal control based dynamics exploration\\ of a rigid car with
  longitudinal load transfer\thanks{An early short version of this work appeared
    as~\cite{AR-GN-JH:10}: the current article includes a much improved
    comprehensive treatment, new results on the proposed model, revised complete
    proofs for all statements, and a new experimental computation scenario.}}

\author{Alessandro Rucco, Giuseppe Notarstefano, and John Hauser
    \thanks{
    A. Rucco and G. Notarstefano are with
    the Department of Engineering, University of Lecce (Universit\`a del Salento), 
    Via per Monteroni, 73100 Lecce, Italy, \texttt{\{alessandro.rucco, giuseppe.notarstefano\}@unisalento.it}}
    \thanks{ J. Hauser is with the Department of Electrical and
    Computer Engineering, University of Colorado, Boulder, CO 80309-0425, USA,
    \texttt{hauser@colorado.edu} }
    }

\maketitle

\begin{abstract}
  In this paper we provide optimal control based strategies to explore the dynamic
  capabilities of a single-track car model which includes tire models and
  longitudinal load transfer.
  Using an explicit formulation of the holonomic constraints imposed on the
  unconstrained rigid car, we design a car model which includes load transfer
  without adding suspension models.
  With this model in hand, we perform an analysis of the equilibrium manifold of
  the vehicle. That is, we design a continuation and predictor-corrector
  numerical strategy to compute cornering equilibria on the entire range of
  operation of the tires.
  Finally, as main contribution of the paper, we explore the system dynamics by
  use of nonlinear optimal control techniques. Specifically, we propose a
  combined optimal control and continuation strategy to compute aggressive car
  trajectories and study how the vehicle behaves depending on its parameters.
  To show the effectiveness of the proposed strategy, we compute aggressive
  maneuvers of the vehicle inspired to testing maneuvers from virtual and real
  prototyping.
\end{abstract}



\section{Introduction}

A new emerging concept in vehicle design and development is the use of
\emph{virtual vehicles}, i.e., software tools that reproduce the behavior of the
real vehicle with high fidelity \cite{TD:95}, \cite{RF-AB:06}. They allow car
designers to perform dynamic tests before developing the real prototype, thus
reducing costs and time to market. This engineering area is called \emph{virtual
  prototyping}.

%
In order to explore the dynamic capabilities of a car vehicle or to design
control strategies to drive it, it is important to develop dynamic models that
capture interesting dynamic behaviors and, at the same time, can be described by
ordinary differential equations of reasonable complexity.
Many models have been introduced in the literature to describe the motion of a
car vehicle both for simulation and control \cite{WFI:95,TG:92,JYW:01,UK-LN:05,RR:06}.
%
The \emph{bicycle model} is a planar rigid model that approximates the vehicle as a
rigid body with two wheels. It is widely used in the literature
since it captures many interesting phenomena concisely.  However, this model
does not capture some important dynamic effects. One of them is load
transfer. The most natural way to model load transfer would be to add suspension
models. 
Using an idea independently developed in \cite{EV-PT-JL:08}, see also
\cite{EV-EF-PT:10}, we will model tire normal loads by means of the reaction
forces generated at the vehicle contact points by the ground. This allows us to
model load transfer without adding suspension models, thus with a reasonable
increase in the model complexity.

Car dynamics analysis at maximum performance has been widely investigated in the
literature. We provide an overview of the relevant literature for our
work. First, an analysis of the equilibrium manifold for race vehicles is
performed in \cite{EO-SH-HT-SD:98} and \cite{EV-EF-PT:10}. In particular,
existence and stability of ``cornering equilibria'', i.e. steady-state
aggressive turning maneuvers, and bifurcation phenomena are investigated. 
In \cite{MA:06} the physical parameters affecting (drifting) steady-state
cornering maneuvers are examined both in simulation and experiments.
 Aggressive non-steady state cornering maneuvers for rally vehicles were
 proposed in \cite{EV-PT-JL:08} (see also \cite{EF:08}), and
 \cite{YI-LI-LU-LIU:11}. In \cite{EV-PT-JL:08} trajectories comparable with real
 testing driver maneuvers were obtained by solving a suitable minimum-time
 optimal control problem, whereas in \cite{YI-LI-LU-LIU:11} stability and
 agility of these maneuvers were studied.  In \cite{DC-RS-PS:00a} and
 \cite{FK-JF-CK-SS:11} minimum-time trajectories of formula one cars were
 designed by means of numerical techniques based on Sequential Quadratic
 Programming and Direct Multiple Shooting, respectively.
In \cite{DC-RS-PS:00b,DC-RS-PS:02} the influence of the vehicle mass and center
of mass on minimum-time trajectories was studied. Recently, \cite{And-Iagn:10},
a constrained optimal control approach was pursued for optimal trajectory
planning in a constrained corridor. A Model Predictive Control approach is used
to control the vehicle along the planned trajectory. Model Predictive Control
for car vehicles has been widely investigated, see, e.g.,
\cite{PF-FB-JA-HET-DH:07a,PF-FB-JA-HET-DH:07b}.  It is worth noting that the
optimal control strategy proposed in the paper for trajectory exploration can be
also used in a Model Predictive Control scheme to track a desired curve.

The contributions of the paper are as follows.
First, we develop a single-track model of rigid car that extends the
capabilities of the well known bicycle model and generalizes the one introduced
in \cite{EV-PT-JL:08}. We call this model LT-CAR, where ``LT'' stands for load
transfer. Our LT-CAR model differs from the one in \cite{EV-PT-JL:08} for an
additional term in the normal forces that depends on the square of the
yaw-rate. As an ``educational'' contribution, we provide a rigorous
derivation of the proposed model by use of a Lagrangian approach.
%
This novel derivation can be extended to a wide class of
mechanical systems subject to a special set of external forces, whose dependence
on internal variables can be modeled by suitable reaction forces (as, e.g.,
motorcycles \cite{PM-JH:09,AS-JH-AB:12}).
%
%
%

%

Second, with this model in hand, we perform an analysis of the equilibrium
manifold of the vehicle. Namely, we study the set of cornering equilibria,
i.e. trajectories of the system that can be performed by use of constant inputs.
%
%
We design a numerical strategy based on zero finding techniques combined with
predictor-corrector continuation methods \cite{ELA-KG:90} to compute the
equilibrium manifold \emph{on the entire range of operation of the tires}.
%
At the best of our knowledge this is the first strategy to systematically
explore the equilibrium manifold on the entire tire range. For example, in
\cite{EO-SH-HT-SD:98,EV-EF-PT:10,AS-JH-AB:12}, only some
snapshots of the equilibrium manifold are computed and analyzed.
To show the effectiveness of the proposed method, we show slices of the
equilibrium manifold using the parameters of a sports car with rear-wheel drive
transmissions given in \cite{GG:00}. Moreover, we investigate the structure of
the equilibrium manifold with respect to variations in the horizontal position
of the center of mass. Moving the center of mass from the rear to the front
causes a significant change in the structure of the equilibrium manifold giving
rise to interesting bifurcations.

Third and final, we develop a trajectory exploration strategy, based on
nonlinear optimal control techniques introduced in
\cite{JH:02}, 
to explore aggressive vehicle trajectories at the limits of its dynamic
capabilities.
%
Clearly, given a vehicle model one could just pose a nonlinear optimal control
problem and apply standard machinery to solve it. The strategy that we propose
goes beyond this straightforward machinery. Indeed, optimal control problems are
infinite dimensional optimization problems that, therefore, can lead to local
minima with significantly different structures. This is crucial in vehicle
dynamics exploration, because the local minimum could be a trajectory that is
not representative of the actual vehicle behavior.

The main idea of the proposed strategy is the following. Given a desired
path-velocity profile, we design a full (state-input) desired curve and look for
a vehicle trajectory minimizing a weighted $L_2$ distance from the desired
curve. In order to solve this optimal control problem, we design an initial
``nonaggressive'' desired curve and morph it to the actual one. For each
temporary desired curve, we solve the optimal control problem by initializing
the numerical method with the optimal trajectory at the previous step. This
continuation idea resembles the learning process of a test-driver when testing
the capabilities of a real vehicle.

%

We show the strategy effectiveness in understanding complex car trajectories on two testing maneuvers.
%
%
In the first test, we perform an aggressive maneuver by using a multi-body
software, Adams, to generate the desired curve. The objective of this choice is
twofold: (i) we show the effectiveness of the exploration strategy in finding an
LT-CAR trajectory close to the desired curve, and (ii) we validate the LT-CAR
model by showing that the desired curve, which is a trajectory of the full Adams
model, is in fact ``almost'' a trajectory of the LT-CAR model. In the second
test, we perform a constant speed maneuver on a real testing track (a
typical maneuver for real vehicle testing). We show how to design a full
state-input desired curve (from the assigned path and speed) by use of a
quasi-static approximation and compute an optimal trajectory that shows a
typical driver behavior in shaping the path to keep the speed constant.

The rest of the paper is organized as follows. In Section~\ref{sec:model} we
introduce and develop the LT-CAR model. In Section~\ref{sec:eq_manif} we
characterize the equilibrium manifold 
and provide a comparison with the standard bicycle model. Finally, in
Section~\ref{sec:traj_explor} we describe the strategy for trajectory
exploration and provide numerical computations performed on virtual and real
testing tracks.




\section{LT-CAR model development}
\label{sec:model}
In this section we introduce the car model with load transfer (LT-CAR) studied
in the paper. This model is an extension of the one proposed in
\cite{EV-PT-JL:08,EV-EF-PT:10}.
%
We model the car as a single planar rigid body with five degrees of freedom
(three displacements and two rotations) and then constrain it to move in a plane
(three degrees of freedom) interacting with the road at two body-fixed contact
points. The center of mass and the two contact points all lie within a plane
with the center of mass located at distance $b$ from the rear contact point and
$a$ from the front one, respectively. Each contact-point/road-plane interaction
is modeled using a suitable tire model as, e.g., the \emph{Pacejka} model
\cite{HBP:02}. A planar view of the rigid car model is shown in
Figure~\ref{fig:rigid_car}.

\begin{figure}[htpb]
\vspace{-2ex}
  \begin{center}
    \includegraphics[scale=.18]{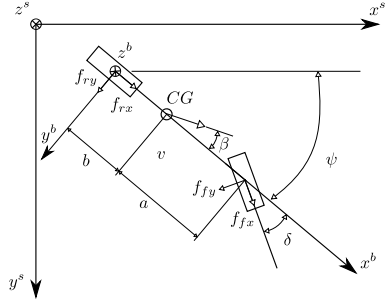}
    \caption{\small LT-CAR model. The figure show the quantities used to describe the
      model.}
    \label{fig:rigid_car}
  \end{center}
\vspace{-4ex}
\end{figure}

The body-frame of the car is attached at the rear contact point with $x$-$y$-$z$
axes oriented in a forward-right-down fashion. We let $\xb=[x, y,
z]^T\in\real^3$ and $R\in SO(3)$ denote the position and orientation of the
frame with respect to a fixed spatial-frame with $x$-$y$-$z$ axes oriented in a
north-east-down fashion. $R$ maps vectors in the body frame to vectors in the
spatial frame so that, for instance, the spatial angular velocity $\omega^s$ and
the body angular velocity $\omega^b$ are related by $\omega^s = R\omega^b$ and
$\omega^b=R^T\omega^s$. Similarly, $\xb^s = \xb + R\,\xb^b$ gives the spatial
coordinates of a point on the body with body coordinates $\xb^b \in \real^3$.
The orientation $R$ of the (unconstrained) rigid car model can be parameterized
(using Roll-Pitch-Yaw parametrization) as follows
\[
\small{
  R=R(\psi, \theta)=R_z(\psi)R_y(\theta) =
  \left[
    \begin{array}{ccc}
      c_{\psi}c_{\theta} & -s_{\psi} & c_{\psi}s_{\theta} \\
      s_{\psi}c_{\theta} & c_{\psi} & s_{\psi}s_{\theta} \\
      -s_{\theta} & 0 & c_{\theta} \\
    \end{array}
  \right],
}
\]
where $\theta$ and $\psi$ are respectively the pitch and yaw angles (we use the
notation $c_{\psi}=\cos(\psi)$, etc.).
In the rest of the paper, for brevity, we use the notation
$\qxi = [\psi,\theta]^T$.
%
The vector
\[
  q = [x,y,\psi,z,\theta]^T = [q_r,q_c]^T
\]
provides a valid set of generalized coordinates for dynamics calculations.
The coordinates
$q_r = [x, y, \psi]^T$ are the \emph{reduced} unconstrained coordinates,
while
$q_c = [z,\theta]^T$ are the \emph{constrained} ones. 
%

\subsection{Tire models} 
\label{sec:tires_gen_forces}
We model the tire forces by using a suitable version of the Pacejka's Magic
Formula \cite{HBP:02}. 
Before, we clarify our notation. We use a subscript ``$f$'' (``$r$'') for
quantities of the front (rear) tire. When we want to give a generic expression
that holds both for the front and the rear tire we just suppress the
subscript. Thus, for example, we denote the generic normal tire force $f_{z}$,
meaning that we are referring to $f_{fz}$ for the front tire and $f_{rz}$ for
the rear one.

The rear and front forces tangent to the road plane, $f_{x}$ and $f_{y}$, depend
on the normal force and on the longitudinal and lateral slips.
The longitudinal slip $\kappa$ is the normalized difference between the angular
velocity of the driven wheel $\omega_w$ and the angular velocity of the
free-rolling $\omega_0 = v_{cx}/r_w$, with $v_{cx}$ the contact point
longitudinal velocity,
\[
\kappa = \frac{\omega_w - \omega_0}{\omega_0} = -\frac{v_{cx} - r_w
  \omega_w}{v_{cx}}.
\]
The lateral slip (or sideslip) $\beta$ is defined as
$\tan{\beta} = v_{cy} / v_{cx}$, 
%
with $v_{cy}$ the lateral velocity.  We assume that the rear and front forces
tangent to the road plane, $f_{x}$ and $f_{y}$, depend linearly on the normal
forces.
%
Thus, the combined slip forces are
\begin{equation*}
  \begin{split}
    f_x=-f_zf_{x0}(\kappa)g_{x\beta}(k,\beta)=-f_z\mu_x(\kappa,\beta)\\
    f_y=-f_zf_{y0}(\beta)g_{yk}(k,\beta)=-f_z\mu_y(\kappa,\beta),
  \end{split}
\end{equation*}
where the pure longitudinal slip $f_{x0}(\kappa)$, the pure lateral slip
$f_{y0}(\beta)$ and the loss functions for combined slip
$g_{x\beta}(\kappa,\beta)$ and $g_{yk}(\kappa,\beta)$ are defined in
Appendix~\ref{APP:params} together with the values of the parameters used in the paper.


The front forces expressed in the body frame, $f_{fx}^b$ and
$f_{fy}^b$, are obtained by rotating the forces in the tire frame according to
the steer angle $\delta$, so that, e.g.,
$f_{fx}^b = f_{fx} c_\delta - f_{fy} s_\delta$.
Substituting the above expressions for $f_{fx}$ and $f_{fy}$, we get
\[
\begin{split}
  f_{fx}^b &= -f_{fz}\big(\mu_{fx}(\kappa_f,\beta_f) c_\delta - \mu_{fy}(\kappa_f,\beta_f)s_\delta\big)
  := -f_{fz} \tilde{\mu}_{fx}(\kappa_f,\beta_f,\delta).
\end{split}
\]
In the rest of the paper, abusing notation, we will suppress the `tilde' and use
$\mu_{fx}(\kappa_f,\beta_f,\delta)$ to denote
$\tilde{\mu}_{fx}(\kappa_f,\beta_f,\delta)$.

We assume to control the longitudinal slips $\kappa_r$ and $\kappa_f$. We want
to point out that, depending on the analysis one can control the two slips
independently or a combination of the two. For example, in the equilibrium
manifold analysis and in the second trajectory exploration scenario we will set
$\kappa_f=0$ and use only $\kappa_r$ as control input (rear-wheel drive).
%
%
 %
Thus, the \emph{control inputs} of
the car turn to be:
\begin{itemize}
\item $\kappa_r$ and $\kappa_f$, the rear and front longitudinal slips, and
\item $\delta$, the front wheel steer angle.
\end{itemize}

\begin{remark}[Longitudinal slip as control input]
  The use of the longitudinal slip as control input is present in the
  literature, e.g., \cite{PF-FB-JA-HET-DH:07b} and \cite{PM-JH:09}. This choice
  does not limit the applicability of our analysis. Indeed, wheel torques can be
  easily computed once a trajectory is computed. \oprocend
\end{remark}


Next, we introduce a simplified tire model that will be used to design
approximate trajectories (trajectories of a simplified car model) characterized
by contact forces that can not be generated by the Pacejka's model.
This simplified tire model, \cite{WFI:95,GG:00,RF-AB-GN:05}, relies on
the following assumptions: (i) the longitudinal force is directly controlled,
(ii) the relationship between the lateral force $f_y$ and the sideslip $\beta$
is linear, and (iii) the longitudinal and lateral forces, $f_x$ and $f_y$, are
decoupled.  We call the simplified car model obtained by using this tire
approximation the Linear Tire LT-CAR (LT$^2$-CAR).
%
Figure~\ref{fig:pacejka} shows the plots of the longitudinal and lateral forces
$f_x$ and $f_y$ for the Pacejka's and linear tire models.


\begin{figure}[htbp] %
\begin{center}
\subfloat[]{\includegraphics[scale=.27]{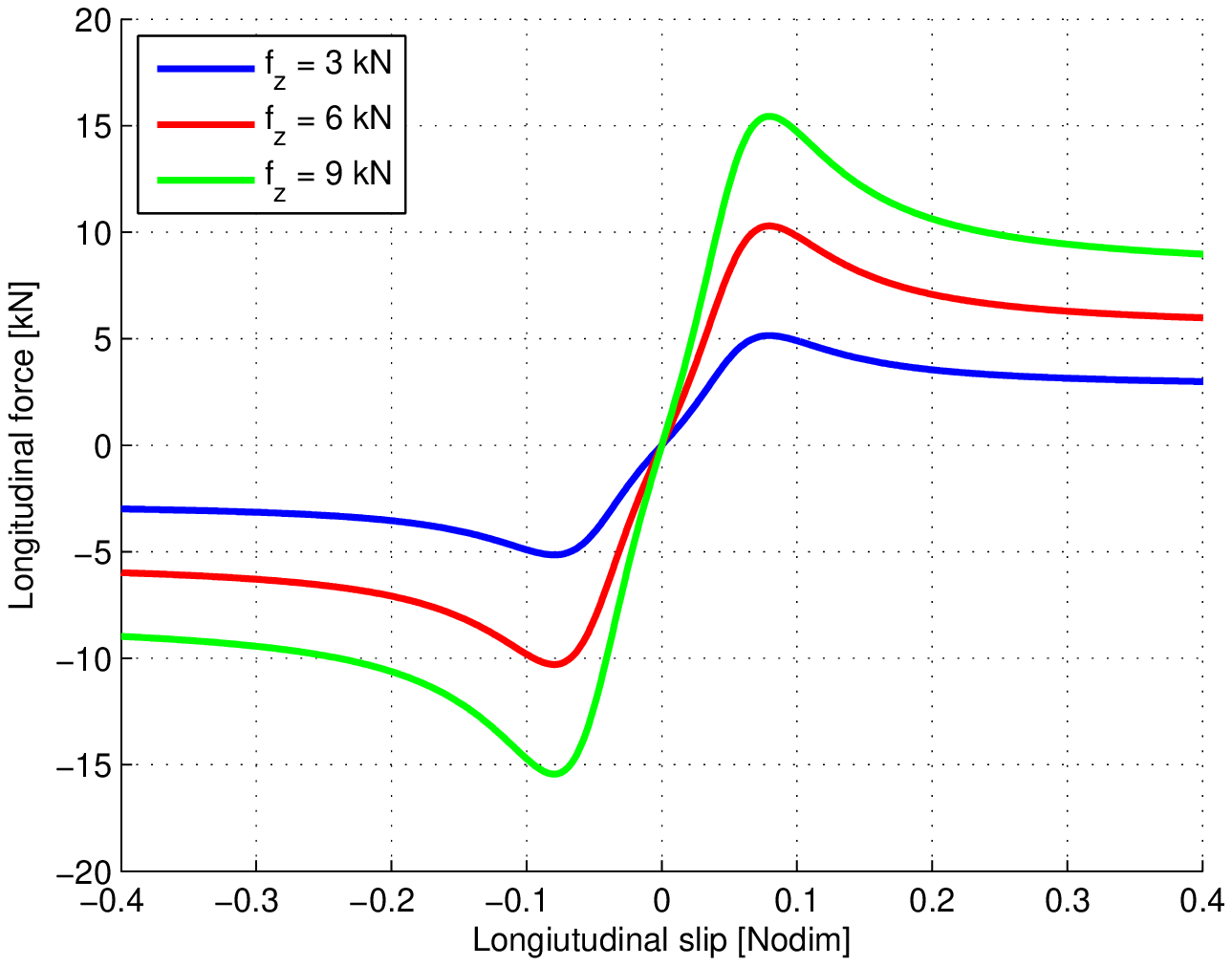} \label{fig:pacejka_long}}
\subfloat[]{\includegraphics[scale=.27]{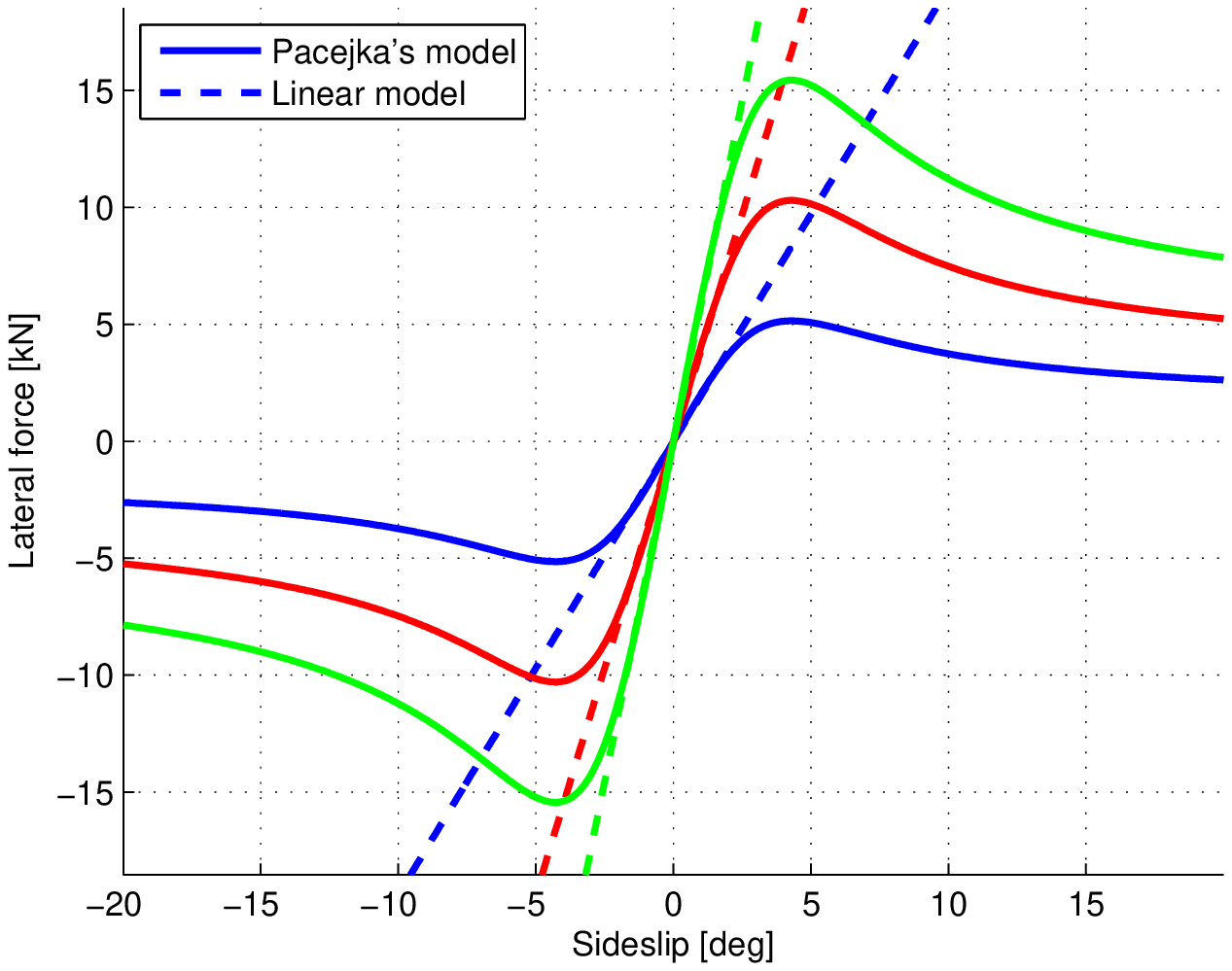} \label{fig:pacejka_lat}}
\subfloat[]{\includegraphics[scale=.27]{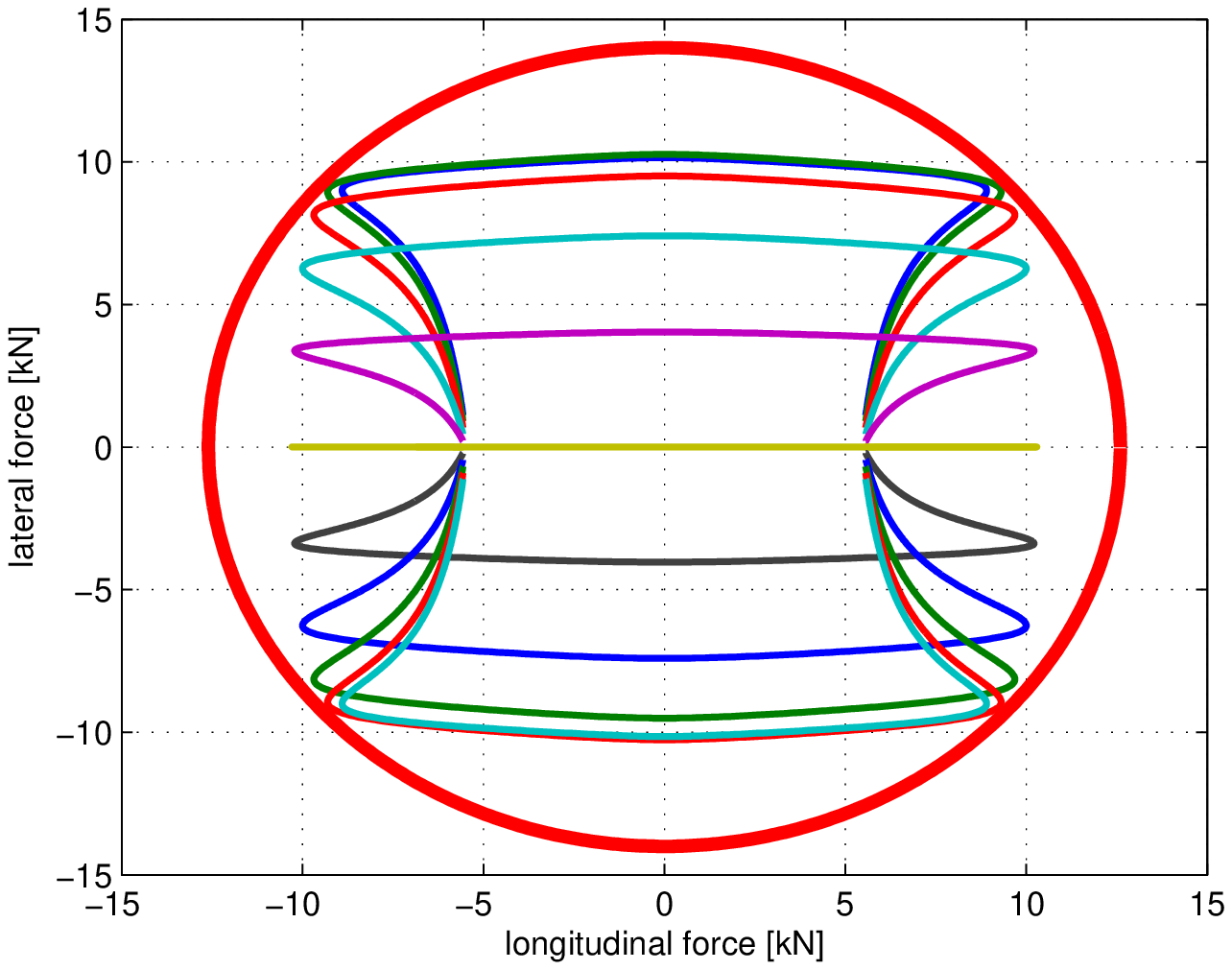} \label{fig:pacejka_ell}}
\caption{\small Pure longitudinal (a) and
    lateral (b) forces are plotted as function of respectively longitudinal and
    lateral slip for three values of the normal force. In (b) the simplified
    tire model (dashed line) is also shown. The longitudinal versus lateral
    force is plotted as function of the longitudinal slip for different values
    of the sideslip (c). The ellipse of maximum tire forces is shown in solid
    red.}\label{fig:pacejka}
\end{center}
\end{figure}

\vspace{-4ex}

\begin{remark}[Other tire models]
  Tires are one of the key components of the vehicle and have an important
  impact on the performance. To capture the complex behavior of the tires
  several models have been developed in the literature \cite{HBP:02},
  \cite{CC:95}, \cite{CC:03}. We highlight that the LT-CAR model can be
  developed with any tire model (not necessarily the Pacejka's one). \oprocend
\end{remark}

\subsection{Constrained Lagrangian dynamics}
\label{subsec:constr_model}
Next, we develop the constrained planar model of the rigid car and include load
transfer.
To describe the motion in the plane, we derive the equations of motion of the
unconstrained system and explicitly incorporate the constraints (rather than
choosing a subset of generalized coordinates). This allows us to have an
explicit expression for the normal (constraint) forces.

We derive the dynamics of the unconstrained system via the Euler-Lagrange
equations. To do this, we define the Lagrangian $\mathcal{L}$ as the difference
between the kinetic and potential energies
$\mathcal{L}(q,\dot{q})=T(q,\dot{q})-V(q)$.
The equations of motion for the unconstrained system are given
by the Euler-Lagrange equations
\begin{equation}
  \label{eq:lagrange}
  \frac{d}{dt}\frac{\partial \mathcal{L}}{\partial\dot{q}}^T - \frac{\partial \mathcal{L}}{\partial q}^T = U
\end{equation}
where $U$ is the set of generalized forces.  Exploiting the Euler-Lagrange
equations, we get
\begin{equation}\label{eq:xypsiztheta}
  M(q)\ddot{q} + C(q,\dot{q})+G(q)=U
\end{equation}
 with $M(q)$, $C(q,\dot{q})$ and $G(q)$ respectively the
  mass matrix, and the Coriolis and gravity vectors.

  The longitudinal and lateral forces arising from the tire-road interactions at
  the front and rear contact points, $f = [f_{fx},f_{fy},f_{rx},f_{ry}]^T$, are
 converted into the generalized forces $U$ by using the \emph{principle of
    virtual work},
$<\!f,v_{cp}^b\!> = <U, \dot{q}>$, 
where $v_{cp}^b = [v_{fx}^b, v_{fy}^b, v_{rx}^b, v_{ry}^b]^T$ are the
longitudinal and lateral velocities at the front and rear contact
points. Computing the Jacobian matrix $J_f(\phi)$ mapping $\dot{q}$ to the front
and rear contact point velocities expressed in the body frame, $v_{cp}^b=
J_f(\phi)\dot{q}$, we get
$<J_f(\phi)^T f, \dot{q}> = <U, \dot{q}>$,
so that
\[
U = J_f^T(\phi) f.
\]
The front and rear contact points coordinates expressed in the body frame are
$\xb_f^b=[a+b,0,0]^T$ and $\xb_r^b=[0,0,0]^T$.  The coordinates in the spatial
frame, respectively $\xb_r^s = [x_r^s, y_r^s, z_r^s]^T$ and $\xb_f^s = [x_f^s,
y_f^s, z_f^s]^T$, are $\xb_r^s=\xb$ and $\xb_f^s=\xb+R\,\xb_f^b$, so that the
velocities in the spatial frame are 
\[
v_r^s = \dot{\xb} \qquad \text{and} \qquad
  v_f^s = \dot{\xb}+R\,\omega^b \times
  \xb_f^b
  = \dot{\xb}-R\,\xb_f^b \times \omega^b
  = \dot{\xb}-R\,\hat{\xb}_f^bJ_{\omega^b}(\qxi)\dot{\qxi},
\]
where $\hat{\xb}_f^b$ is the skew-symmetric matrix associated to the vector $\xb_f^b$,
while the velocities expressed in the body frame are
\[
  v_r^b = R^T \dot{\xb} = J_{v_r^b}(\qxi) \dot{q}\, \qquad \text{and} \qquad
  v_f^b = R^T \dot{\xb}-\hat{\xb}_f^bJ_{\omega^b}(\qxi)\dot{\qxi}
  = J_{v_f^b}(\qxi) \dot{q}\,.
\]
Thus, the Jacobian $J_f$ turns to be
\begin{align*}
  J_f(\qxi) &=
  \small{ \left[
      \begin{array}{c}
        J_{v_{fx}^b} \\
        J_{v_{fy}^b} \\
        J_{v_{rx}^b} \\
        J_{v_{ry}^b}
      \end{array}
    \right] } = \small{ \left[
      \begin{array}{ccccc}
        c_{\psi}c_{\theta} & s_{\psi}c_{\theta} & 0 & -s_{\theta} & 0 \\
        -s_{\psi} & c_{\psi} & (a+b)c_\theta & 0 & 0 \\
        c_{\psi}c_{\theta} & s_{\psi}c_{\theta} & 0 & -s_{\theta} & 0\\
        -s_{\psi} & c_{\psi} & 0 & 0 & 0\\
      \end{array}
    \right]} \,.
\end{align*}

Next, we constrain the contact points to the road plane in order to compute the
normal tire forces as reaction forces.
We impose the constraint that the rear and front contact points have zero
velocity along the $z$ axis. The velocity constraints are given by
$\dot{z}_r = e_3^TR^T\dot{\xb} = J_{v^b_{rz}}(\qxi)\dot{q} = 0$, and
$\dot{z}_f = e_3^T(R^T \dot{\xb}-\hat{\xb}_f^bJ_{\omega^b}(\qxi)\dot{\qxi}) = J_{v^b_{fz}}(\qxi)\dot{q} = 0$, where $e_3=[0, 0, 1]^T$, and $z_r$ ($z_f$) is the position of the rear (front) contact point
expressed in the body frame.
The front and rear constraints may be written in the form $A(q)\dot{q}=0$, where
\begin{equation}\label{eq:constr_matrix}
  \begin{split}
    A(q)  \!=\!
    \begin{bmatrix}
      J_{v^b_{fz}}(\qxi) \\
      J_{v^b_{rz}}(\qxi) \\
    \end{bmatrix}
    =\!
    \begin{bmatrix}
      c_{\psi}s_{\theta} & s_{\psi}s_{\theta} & 0 &  c_{\theta} & -(a+b) \\
      c_{\psi}s_{\theta} & s_{\psi}s_{\theta} & 0 & c_{\theta} & 0 \\
    \end{bmatrix}\!.
  \end{split}
\end{equation}
From the principle of virtual work, 
we get the vector of constraint generalized forces, $U_c$, in terms of the front
and rear normal contact point forces, $\lambda = [-f_{fz},~-f_{rz}]^T \in
\real^2$, as $U_c = -A^T(q)\lambda$.


In the next proposition we show that, under the linear dependence of the contact
point forces on the normal ones, the constrained system can be explicitly
written as an unconstrained ordinary differential equation.
Since the proof follows classical arguments from mechanics, we reported it in
Appendix~\ref{APP:proof_model} as a tutorial contribution.

\begin{proposition} 
\label{prop:car_model}
  Given the unconstrained car model with structure as in \eqref{eq:xypsiztheta}
  and constraints in \eqref{eq:constr_matrix}, the following holds true:
  \begin{enumerate}
  \item the dynamics of the constrained system can be written in terms of the
    unconstrained coordinates $q_r=[x,y,\psi]^T$ and the reaction forces $\lambda
    = [-f_{fz},~-f_{rz}]^T$ as
    \begin{equation}
        \tilde{\MM}(q_r)
        \left[
          \begin{array}{c}
            \ddot{q}_r \\
            \lambda \\
          \end{array}
        \right]
        +\CC(q_r,\dot{q}_r)+\GG(q_r)= \UU
        ,  
      \label{th:lagr_const_matrix}
    \end{equation}
    where
    \begin{align*}
      &\tilde{\MM}(q_r) = \left[
        \begin{array}{cc}
          \MM_{11}(q_r) & 0 \\
          \MM_{21}(q_r) & \MM_{22}(q_r) \\
        \end{array}
      \right]
      = \small{ \left[
          \begin{array}{ccc|cc}
            m & 0 & -mbs_{\psi} & 0 & 0 \\
            0 & m & mbc_{\psi} & 0 & 0 \\
            -mbs_{\psi} & mbc_{\psi} & I_{zz}+mb^2 & 0 & 0 \\
            \hline
            0 & 0 & 0 & 1 & 1 \\
            -mhc_{\psi} & -mhs_{\psi} & 0 & -(a+b) & 0
          \end{array}
        \right]} ,
    \end{align*}
    \begin{equation}
      \CC(q_r,\dot{q}_r)=
      \left[
        \begin{array}{c}
          \CC_{1}(q_r,\dot{q}_r) \\
          \CC_{2}(q_r,\dot{q}_r) \\
        \end{array}
      \right]=
      \small{
        \left[
          \begin{array}{c}
            -mbc_{\psi}\dot{\psi}^2 \\
            -mbs_{\psi}\dot{\psi}^2 \\
            0 \\
            \hline
            0 \\
            (I_{xz}+mhb)\dot{\psi}^2
          \end{array}
        \right]}, \;
      \GG(q_r) =
      \left[
        \begin{array}{c}
          \GG_{1}(q_r) \\
          \GG_{2}(q_r) \\
        \end{array}
      \right]=
      \small{
        \left[
          \begin{array}{c}
            0 \\
            0 \\
            0 \\
            \hline
            -mg \\
            mgb
          \end{array}
        \right]},
      \label{th:lagr_const_G}
    \end{equation}
    \begin{equation}
      \small{
        \UU =
        \left[
          \begin{array}{c}
            \UU_1 \\ \hline
            0
          \end{array}
        \right] =
        \left[
          \begin{array}{cccccc}
            c_\psi   & -s_\psi   & c_\psi   &  -s_\psi \\
            s_\psi   & c_\psi   & s_\psi   &   c_\psi \\
            0  &        a+b  &       0  &        0  \\ \hline
            0  &        0  &       0  &        0  \\
            0  &        0  &       0  &        0  \\
          \end{array}
        \right]
        \left[
          \begin{array}{c}
            f_{fx}\\
            f_{fy}\\
            f_{rx}\\
            f_{ry}
          \end{array}
        \right];}
      \label{th:lagr_const_U}
    \end{equation}
  \item the subsystem
    \begin{equation} \label{eq:xypsi}
      \MM_{11}(q_r)\ddot{q}_r+\CC_1(q_r,\dot{q}_r)+\GG_1(q_r)=\UU_1
    \end{equation}
    is a Lagrangian system obtained from a suitable \emph{reduced Lagrangian}
    $\LL_r(q_r)$, with constraint forces $\lambda$ determined by
    \[
    \MM_{21}(q_r)\ddot{q}_r+\MM_{22}(q_r)\lambda+\CC_2(q_r,\dot{q}_r)+\GG_2(q_r)=0;
    \]
  \item under the assumption that the forces $f$ depend linearly on the reaction forces, i.e. $f = F\lambda$, the car dynamics turn to be
    \begin{equation}
         \MM(q_r, \mu)
        \left[
          \begin{array}{c}
            \ddot{q}_r \\
            \lambda \\
          \end{array}
        \right]
        +\CC(q_r,\dot{q}_r)+\GG(q_r)=
        0
       \label{th:dynamic_eq}
    \end{equation}
    with
    \begin{align*}
      &\MM(q_r,\mu) = \left[
        \begin{array}{cc}
          \MM_{11}(q_r) & \MM_{12}(q_r,\mu) \\
          \MM_{21}(q_r) & \MM_{22}(q_r) \\
        \end{array}
      \right].
    \end{align*}
  \end{enumerate}
\end{proposition}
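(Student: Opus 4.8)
The plan is to read \eqref{eq:xypsiztheta} as the \emph{unconstrained} Euler--Lagrange system and to append the road reactions through ideal constraint forces, then restrict the whole thing to the constraint submanifold. Since the generalized constraint force is $U_c = -A^T(q)\lambda$, the constrained motion obeys $M(q)\ddot q + C(q,\dot q) + G(q) = U - A^T(q)\lambda$ together with $A(q)\dot q = 0$. For part~(i) I would first read off the integrability of \eqref{eq:constr_matrix}: subtracting the two rows gives $(a+b)\dot\theta = 0$, so $\theta$ is constant, and the surviving row then forces $\dot z = 0$. Because the car starts flat on the road, with both contact points grounded, this pins the constrained coordinates to $q_c = [z,\theta]^T \equiv 0$, whence $\dot q_c \equiv 0$ and $\ddot q_c \equiv 0$ throughout the motion; this is the holonomic reduction that makes the planar description exact.

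The delicate bookkeeping point is the order of operations: $M$, $C$, $G$ and the Jacobian $J_f$ must be obtained \emph{symbolically} as functions of the full $q$ (and $\dot q$) by applying \eqref{eq:lagrange} to the Lagrangian $\LL = T-V$ of the five-degree-of-freedom spatial rigid body, and only \emph{afterwards} evaluated at $z=\theta=0$, $\dot z = \dot\theta = 0$, $\ddot z = \ddot\theta = 0$. Once this restriction is carried out, the multiplier term collapses: the first three rows of $A^T(q)$ vanish at $\theta=0$, so $-A^T\lambda$ feeds only the $z$ and $\theta$ equations and reproduces exactly the block $\MM_{22}\lambda$; likewise the last two components of $\UU=J_f^T f$ drop out, because the columns of $J_f$ conjugate to $z$ and $\theta$ vanish at $\theta=0$. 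Collecting the five scalar equations in the five unknowns $(\ddot q_r,\lambda)$ and matching coefficients then gives the block-lower-triangular $\tilde{\MM}(q_r)$ and the vectors $\CC$, $\GG$, $\UU$ asserted in \eqref{th:lagr_const_matrix}--\eqref{th:lagr_const_U}.

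Part~(ii) follows from the structure just obtained: the top-right block of $\tilde{\MM}$ is zero, so the first three equations \eqref{eq:xypsi} contain neither $\lambda$ nor $q_c$. Setting $\LL_r(q_r,\dot q_r) := \LL(q_r,0,\dot q_r,0)$, I would invoke the classical reduction theorem for ideal holonomic constraints --- the reactions do no virtual work, since $U_c^T\dot q = -\lambda^T A(q)\dot q = 0$ --- to conclude that the Euler--Lagrange equations of $\LL_r$ coincide with \eqref{eq:xypsi}; the two eliminated rows, with $\UU_2=0$, rearrange to $\MM_{21}\ddot q_r + \MM_{22}\lambda + \CC_2 + \GG_2 = 0$, which fixes $\lambda$. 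Part~(iii) is then pure algebra: substituting $f = F\lambda$ makes $\UU_1$ linear in $\lambda$, and transposing this term to the left-hand side of \eqref{th:lagr_const_matrix} converts the previously zero upper-right block into $\MM_{12}(q_r,\mu)$ while zeroing the right-hand side, yielding \eqref{th:dynamic_eq}.

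I expect the main obstacle to be the first half of part~(i), namely the patient derivation of the full unconstrained mass, Coriolis and gravity terms for the spatial body and the disciplined ``differentiate first, restrict second'' evaluation at $z=\theta=0$. A useful consistency check that the algebra is correct is that the center-of-mass height $h$ surfaces only in the coupling block $\MM_{21}$, through the entries $-mhc_\psi$ and $-mhs_\psi$: this is precisely the term that converts longitudinal acceleration into the front/rear normal-force split, i.e.\ the load transfer that the model is designed to capture.
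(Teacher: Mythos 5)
Your proposal is correct and follows essentially the same route as the paper's proof in Appendix~\ref{APP:proof_model}: append the reactions as multiplier forces $-A^T(q)\lambda$, restrict the full five-coordinate Euler--Lagrange equations to $q_c=\dot q_c=\ddot q_c=0$, and regroup the five scalar equations in the extended unknowns $[\ddot q_r,\lambda]^T$ to obtain $\tilde{\MM}=[M_1(q_r)\,|\,A^T]$, with (ii) handled via the reduced Lagrangian and (iii) by moving the now-$\lambda$-linear forcing term to the left-hand side. Your added observations on the integrability of $A(q)\dot q=0$ (yielding $\dot\theta=0$, then $\dot z=0$) and on the ``differentiate first, restrict second'' order of evaluation are consistent with, and slightly more explicit than, what the paper states.
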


\begin{remark}
 Equation \eqref{th:dynamic_eq} can be exploited as
  \[
  \begin{split}
    \ddot{q}_r &=\small{-(\MM_{11}+\MM_{12}\MM_{22}^{-1}\MM_{21})^{-1}}  
[\CC_1+\GG_1+\MM_{12}\MM_{22}^{-1}(\CC_2+\GG_2)]\\
    \lambda &= -\MM^{-1}_{22}(\CC_2+\GG_2+\MM_{21}\ddot{q}_r) .
  \end{split}
  \]
  From this expression it is clear that we have a dynamic model explicitly
  depending on the unconstrained coordinates $x$, $y$ and $\psi$ and an explicit
  expression for the reaction forces that can be used to calculate the normal
  loads for the tire forces.  \oprocend
\end{remark}

\subsection{Model well-posedness and load transfer analysis}
An important aspect to investigate is the well-posedness of the constrained
model \eqref{th:dynamic_eq}.  Differently from the standard unconstrained
equations of motion, as in \eqref{eq:xypsiztheta}, for which the mass matrix is
always positive definite (and thus invertible), the invertibility of the matrix
$\MM(q_r, \mu)$ depends on the model and tire parameters.
\begin{proposition}
\label{prop:RoV}
  The LT-CAR model is well-posed if the following
  inequalities are satisfied
  \begin{equation}
  \label{eq:RoV}
\begin{split}
\mu_{rx}<\frac{I_{xz}\dot{\psi}^2}{mgh}+\frac{b}{h} \;\; \text{and} \;\;
\mu_{fx}>\frac{I_{xz}\dot{\psi}^2}{mgh}-\frac{a}{h}.
\end{split}
\end{equation}
\end{proposition}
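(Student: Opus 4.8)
The plan is to show that the inequalities in \eqref{eq:RoV} guarantee that $\MM(q_r,\mu)$ is invertible, so that \eqref{th:dynamic_eq} defines a genuine ODE (as in the Remark), and moreover that the reaction forces $f_{fz},f_{rz}$ so obtained are strictly positive. I would first reduce invertibility to a scalar condition. Since $\MM_{22}$ is constant with $\det\MM_{22}=a+b>0$, the Schur-complement factorization gives $\det\MM=(a+b)\det\big(\MM_{11}-\MM_{12}\MM_{22}^{-1}\MM_{21}\big)$, so it suffices to control the determinant of the Schur complement $S=\MM_{11}-\MM_{12}\MM_{22}^{-1}\MM_{21}$.

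The key simplification is that the coupling term has rank one. A direct computation shows that both rows of $\MM_{22}^{-1}\MM_{21}$ are proportional to $[\,c_\psi,\ s_\psi,\ 0\,]$ with opposite signs, so that $\MM_{12}\MM_{22}^{-1}\MM_{21}=k\,w\,\tilde v^{T}$ is an outer product, with $\tilde v=[c_\psi,s_\psi,0]^{T}$, $k=\tfrac{mh}{a+b}$, and $w$ the difference of the two columns of $\MM_{12}$. I would then apply the matrix determinant lemma, $\det S=\det\MM_{11}\,\big(1-k\,\tilde v^{T}\MM_{11}^{-1}w\big)$. Two facts make this explicit: $\det\MM_{11}=m^{2}I_{zz}$, and $\tilde v$ is an eigenvector of $\MM_{11}$ with eigenvalue $m$, whence $\MM_{11}^{-1}\tilde v=\tilde v/m$. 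Computing $\tilde v^{T}w$ one finds that the lateral coefficients $\mu_{fy},\mu_{ry}$ cancel (lateral forces generate no pitch/load-transfer moment in a single-track model) and $\tilde v^{T}w=\mu_{rx}-\mu_{fx}$. This yields $\det\MM=m^{2}I_{zz}\big[(a+b)+h(\mu_{fx}-\mu_{rx})\big]$.

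To connect the determinant to the data of \eqref{eq:RoV}, I would then solve the two constraint-force equations $\MM_{21}\ddot q_r+\MM_{22}\lambda+\CC_2+\GG_2=0$ for the normal loads, eliminating $\ddot q_r$ through the body-longitudinal projection $c_\psi\ddot x+s_\psi\ddot y$ of the $q_r$-equations (the $\ddot\psi$ and lateral terms drop out of this projection, leaving the net longitudinal balance). This gives the closed forms $f_{fz}=\frac{mgb+I_{xz}\dot\psi^{2}-mgh\,\mu_{rx}}{(a+b)+h(\mu_{fx}-\mu_{rx})}$ and $f_{rz}=\frac{mga-I_{xz}\dot\psi^{2}+mgh\,\mu_{fx}}{(a+b)+h(\mu_{fx}-\mu_{rx})}$, whose common denominator is exactly $\det\MM/(m^{2}I_{zz})$.

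The final step is the logical crux. Each numerator is strictly positive precisely when one of the inequalities in \eqref{eq:RoV} holds: dividing the numerator of $f_{fz}$ by $mgh$ reproduces $\mu_{rx}<\tfrac{I_{xz}\dot\psi^{2}}{mgh}+\tfrac{b}{h}$, and similarly for $f_{rz}$. Moreover the two numerators obey the identity $\big(mgb+I_{xz}\dot\psi^{2}-mgh\mu_{rx}\big)+\big(mga-I_{xz}\dot\psi^{2}+mgh\mu_{fx}\big)=mg\big[(a+b)+h(\mu_{fx}-\mu_{rx})\big]$, i.e. their sum equals $mg$ times the common denominator. Hence, whenever both inequalities hold, the sum of the (positive) numerators is positive, which forces the denominator $(a+b)+h(\mu_{fx}-\mu_{rx})$ to be positive; consequently $\det\MM\neq0$ and both reaction forces are positive, establishing well-posedness. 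The main obstacle is the explicit determinant/force computation; the rank-one observation (equivalently, the body-longitudinal projection that decouples the pitch balance from the lateral dynamics) is what keeps it tractable, and the sum-of-numerators identity is the nonobvious step that converts the two positivity inequalities into the required invertibility.
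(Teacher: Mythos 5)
Your proposal is correct and follows essentially the same route as the paper's proof: compute the normal loads in closed form, observe that each inequality in \eqref{eq:RoV} is precisely positivity of one numerator, and obtain the sign of the common denominator (equivalently, invertibility of $\MM$) by combining the two conditions; your Schur-complement/determinant computation simply makes explicit the ``simple operations on the $\MM$ matrix'' that the paper glosses over. The only discrepancy is a global sign: in the paper's down-positive body frame the conditions force $f_{fz}$ and $f_{rz}$ to be \emph{negative} (i.e.\ $-f_{fz}>0$ and $-f_{rz}>0$, consistent with $f_{fz}+f_{rz}=-mg$ from the first constraint row), whereas your closed forms carry the opposite denominator sign and you assert positivity --- a convention slip that does not affect the validity of the argument.
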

\begin{proof}
  By means of simple operations on the $\MM$ matrix, we can 
  compute the normal forces
    \begin{equation} \label{eq:load_transfer}
      f_{fz} = \frac{mgb - mgh \mu_{rx} + I_{xz}\dot{\psi}^2}{h\left(\mu_{rx}-\mu_{fx}\right)-(a+b)} \, , \qquad
      f_{rz} = \frac{mga + mgh \mu_{fx} - I_{xz}\dot{\psi}^2}{h(\mu_{rx}-\mu_{fx})-(a+b)} \, .
  \end{equation}
  In order for the model to be valid, both the two reaction forces,
  $f_{fz}$ and $f_{rz}$, need to be negative. Indeed, the ground is a unilateral
  constraint and, therefore, cannot generate a positive reaction force. Clearly,
  if the two conditions in \eqref{eq:RoV} are satisfied, the denominator of the
  two reaction forces is negative and both the two nominators are positive, thus
  concluding the proof.
\end{proof}


\begin{remark}
  From the combined slip Pacejka's formulas, $\mu_{rx}$ and
  $\mu_{fx}$ are bounded by
  \begin{align*}
    |\mu_{rx}| &= |f_{rx0}(\kappa_r)g_{rx\beta}(\kappa_r,\beta_r)| \leq d_x^r \\
    |\mu_{fx}| &= |c_{\delta} f_{fx0}(\kappa_f)g_{fx\beta}(\kappa_f,\beta_f)-s_{\delta}f_{fy0}(\beta_f)g_{fx\beta}(\kappa_f, \beta_f)| \leq (d_x^f+d_y^f).
  \end{align*}
  Thus, for the data provided in Appendix~\ref{APP:params}, the conditions of
  Proposition~\ref{prop:RoV} are always satisfied for ``reasonable'' values of
  $\dot{\psi}$ (e.g., for $|\dot\psi|<2\pi$ rad/s). \oprocend
\end{remark}

Equations \eqref{eq:load_transfer} 
show the influence of the front and rear longitudinal force coefficients on the load transfer.
A sharp acceleration, due to a high (positive) value of $\mu_{rx}$, increases
the load on the rear wheel while reducing the load on the front.
Similarly, a hard braking, mainly due to a high (negative) value of $\mu_{fx}$,
increases the front normal load while reducing the normal load on the rear.
%
In particular, if $\mu_{rx}=\frac{I_{xz}\dot{\psi}^2+mgb}{mgh}$, then the front
wheel leaves the ground, thereby producing a ``wheelie''; if
$\mu_{fx}=\frac{I_{xz}\dot{\psi}^2-mga}{mgh}$, then the rear wheel leaves the
ground, thereby producing a ``stoppie''.  In Figure~\ref{fig:RoV}, we provide a
graphical representation of the model validity region.
%
\begin{figure}[htbp]
  \begin{center}
    \includegraphics[scale=.4]{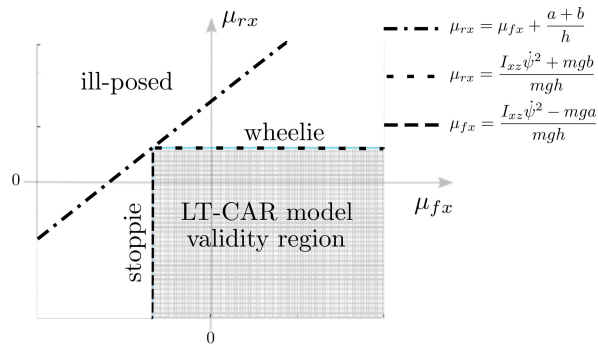}
    \caption{\small Well-posedness interpretation of the
      model. LT-CAR model validity region: the wheelie and stoppie
      conditions are avoided, i.e. $-f_{fz}>0$ and $-f_{rz}>0$.}
    \label{fig:RoV}
  \end{center}
\end{figure}

\subsection{Dynamics in the body frame}
\label{sec:dyn_body}
We provide the dynamics in the body frame with two different set of
coordinates. These dynamics will be helpful in the characterization of the
equilibrium manifold and in the exploration strategy. Indeed, expressing the
dynamics in the body frame, allows us to decouple them from the kinematics and,
thus, write a reduced model which includes only velocities and accelerations.

Since the dynamics do not depend on the positions $x$ and $y$, and the
orientation $\psi$, we can work directly with the longitudinal velocity $v_x$
and the lateral velocity $v_y$. To do this, note that
\begin{equation} \label{eq:ddxddy_dvxdvy}
\left[
  \begin{array}{c}
    \ddot{x} \\
    \ddot{y} \\
  \end{array}
\right] = R_z(\psi) \left[
  \begin{array}{c}
    \dot{v}_x-v_y\dot{\psi} \\
    \dot{v}_y+v_x\dot{\psi} \\
  \end{array}
\right].
\end{equation}
Thus, we get the equations in \eqref{sys:v}.

One more version of the dynamics is obtained by choosing as states the vehicle
speed $v$ and the vehicle sideslip angle $\beta$, where $\tan \beta =
v_y/v_x$. This change of coordinates is helpful to calculate the equilibrium
manifold in the next section.  In this case,
denoting $\chi=\psi+\beta$ the orientation of the velocity with respect to the
spatial frame, we have
\[
\left[
  \begin{array}{c}
    \ddot{x} \\
    \ddot{y} \\
  \end{array}
\right] = R_z(\chi) \left[
  \begin{array}{c}
    \dot{v} \\
    v\dot{\chi} \\
  \end{array}
\right] = R_z(\psi)R_z(\beta) \left[
  \begin{array}{c}
    \dot{v} \\
    v\dot{\chi} \\
  \end{array}
\right],
\]
where $\dot{v}$ and $v\dot{\chi}$ are the longitudinal and lateral
accelerations, respectively. Finally, considering the relation~\eqref{eq:ddxddy_dvxdvy}
we have
\[
\left[
  \begin{array}{c}
    \dot{v} \\
    v\dot{\beta} \\
  \end{array}
\right] = R_z(\beta)^T \left[
  \begin{array}{c}
    \dot{v}_x-v_y\dot{\psi} \\
    \dot{v}_y+v_x\dot{\psi} \\
  \end{array}
\right] - \left[
  \begin{array}{c}
    0 \\
    v\dot{\psi} \\
  \end{array}
\right],
\]
so that the equations of motion are the one given in \eqref{sys:beta}.

We have a family of car models, 
\eqref{sys:v} and \eqref{sys:beta},
that provide different insights depending on the
features to investigate. The model \eqref{sys:v} is used to explore the
dynamics of the car vehicle; the models \eqref{sys:v} and
\eqref{sys:beta} are used to solve the equilibrium manifold (under usual
driving conditions, it is natural to specify $v$ and $\beta$).

\begin{figure*}[!t]
  \normalsize
    \begin{equation} \label{sys:v} 
    \footnotesize{
        \begin{bmatrix}
          m & 0 & 0 & \mu_{fx} & \mu_{rx} \\
          0 & m & mb & \mu_{fy} & \mu_{ry} \\
          0 & mb & (I_{zz}+mb^2) & (a+b)\mu_{fy} & 0 \\
          0 & 0 & 0 & -1 & -1 \\
          -mh & 0 & 0 & a+b & 0 \\
        \end{bmatrix}
        \left[
          \begin{array}{c}
            \dot{v}_x \\
            \dot{v}_y \\
            \ddot{\psi} \\
            f_{fz} \\
            f_{rz} \\
          \end{array}
        \right]
        +
        \begin{bmatrix}
          -mb\dot{\psi}^2-mv_y\dot{\psi} \\
          mv_x\dot{\psi} \\
          mbv_x\dot{\psi} \\
          0 \\
          (I_{xz}+mhb)\dot{\psi}^2+mhv_y\dot{\psi} \\
        \end{bmatrix}
        +
        \begin{bmatrix}
          0 \\
          0 \\
          0 \\
          -mg \\
          mgb \\
        \end{bmatrix}
        =
        \begin{bmatrix}
          0 \\
          0 \\
          0 \\
          0 \\
          0 \\
        \end{bmatrix}
      }
    \end{equation}
    \begin{equation} \label{sys:beta} 
    \footnotesize{
        \begin{bmatrix}
          mc_{\beta} & -mvs_{\beta} & 0 & \mu_{fx} & \mu_{rx} \\
          ms_{\beta} & mvc_{\beta} & mb & \mu_{fy} & \mu_{ry} \\
          mbs_{\beta} & mbvc_{\beta} & (I_{zz}+mb^2) & (a+b)\mu_{fy} & 0 \\
          0 & 0 & 0 & -1 & -1 \\
          -mhc_{\beta} & mhvs_{\beta} & 0 & a+b & 0 \\
        \end{bmatrix}
        \!
        \left[
          \begin{array}{c}
            \dot{v} \\
            \dot{\beta} \\
            \ddot{\psi} \\
            f_{fz} \\
            f_{rz} \\
          \end{array}
        \right]
        \! + \!
        \begin{bmatrix}
          -mv\dot{\psi}s_{\beta}-mb\dot{\psi}^2\\
          -mv\dot{\psi}c_{\beta} \\
          mbv\dot{\psi}c_{\beta} \\
          0 \\
          (I_{xz}+mhb)\dot{\psi}^2+mhv\dot{\psi}s_{\beta} \\
        \end{bmatrix}
        \! + \!
        \begin{bmatrix}
          0\\
          0 \\
          0 \\
          -mg \\
          mgb \\
        \end{bmatrix}
        \! = \!
        \begin{bmatrix}
          0\\
          0 \\
          0 \\
          0 \\
          0 \\
        \end{bmatrix}
      }
    \end{equation}
\end{figure*}

\begin{remark}[Model development and existing literature]
  The proposed LT-CAR model development differs form the one proposed in
  \cite{EV-PT-JL:08} as follows. First, we provide a detailed derivation of the
  model based on the Lagrangian approach. This derivation allows us to exploit
  an interesting structure of the proposed car vehicle.
  Second, we consider the off-diagonal inertia term $I_{xz}$, see the Coriolis
  term in \eqref{sys:v} and \eqref{sys:beta}, which becomes significant in
  studying both cornering equilibria and aggressive maneuvers.
  Third and final, we analyze the region of model validity in terms of the
  vehicle (geometric and tire) parameters. \oprocend
\end{remark}

\section{Equilibrium Manifold}
\label{sec:eq_manif}
In this section we analyze the equilibrium manifold of the car model, i.e. the
set of trajectories that can be performed by use of constant inputs.
Searching for ``constant'' trajectories requires the solution of a set of
nonlinear equations expressing the fact that all accelerations must be set to
zero.  To define an equilibrium trajectory, we refer to the car model in the
form \eqref{sys:beta}. The equilibria are obtained by enforcing
\begin{equation}
  (\dot{v},\dot{\beta},\ddot{\psi})=(0,0,0).
  \label{eq:constr}
\end{equation}
The corresponding trajectory of the full car model (including position and
orientation) is a circular path at constant speed $v$, yaw rate $\dot{\psi}$ and
vehicle sideslip angle $\beta$. Since $\dot{\beta}=0$, the lateral acceleration
is given by $a_{lat}=v\dot{\psi}$, and expressing the accelerations in the body
frame as follows,
\[
\left[
  \begin{array}{c}
    a_x \\
    a_y \\
  \end{array}
\right] = \left[
  \begin{array}{c}
    \dot{v}_x - v_y\dot{\psi} \\
    \dot{v}_y + v_x\dot{\psi} \\
  \end{array}
\right] = R_z(\beta) \left[
  \begin{array}{c}
    \dot{v} \\
    v\dot{\chi} \\
  \end{array}
\right]
\]
we have
\[
  a_x = -a_{lat}\sin{\beta}\,, \quad
  a_y = a_{lat}\cos{\beta}\,, \quad
  \dot{\psi} = a_{lat}/v.
\]
Now, referring to the dynamic model (\ref{sys:beta}), we set the
constraints (\ref{eq:constr}) and we get two equations from the load transfer in
equilibrium condition
\begin{equation}
\begin{split}
  -f_{fz} &= mg\frac{b}{a+b}+\frac{(I_{xz}+mhb)(\frac{a_{lat}}{v})^2+a_{lat}mh\sin\beta}{a+b} \\
  -f_{rz} &= mg\frac{a}{a+b}-\frac{(I_{xz}+mhb)(\frac{a_{lat}}{v})^2+a_{lat}mh\sin\beta}{a+b}
\end{split}
  \label{eq:eqmfd_fz}
\end{equation}
and the following three equations from the system dynamics:
\begin{equation}
  \begin{split}
    ma_x-mb\dot{\psi}^2+\mu_{fx}f_{fz}+\mu_{rx}f_{rz}&=0\\
    ma_y+\mu_{fy}f_{fz}+\mu_{ry}f_{rz}&=0\\
    mba_y+(a+b)\mu_{fy}f_{fz}&=0.\\
  \end{split}
  \label{eq:eq_manifold}
\end{equation}
For the clarity of presentation, we perform the equilibrium manifold computation
by using only the rear slip $\kappa_r$ as control input (and setting the
longitudinal one, $\kappa_f$, to zero). Substituting the expression of the
normal forces \eqref{eq:eqmfd_fz} into equations \eqref{eq:eq_manifold}, we
obtain a nonlinear system of three equations in five unknowns ($v$, $a_{lat}$,
$\beta$, $\delta$ and $\kappa_r$), so that the equilibrium manifold is a
two-dimensional surface.
%
%
We parameterize the equilibrium manifold in terms of the car speed and lateral
acceleration ($v$ and $a_{lat}$), so that the slip angle, steer angle and
longitudinal slip ($\beta$, $\delta$ and $\kappa_r$) are obtained by solving the
nonlinear equations in \eqref{eq:eq_manifold}.

We solve the nonlinear system by using a predictor corrector continuation
method, as described in \cite{ELA-KG:90}, relying on the continuity of the equilibria with
respect to the equilibrium manifold parameters $v$ and $a_{lat}$.
Next, we describe the predictor corrector continuation method applied to the
equilibrium manifold of our car model. We fix the
velocity $v$ and explore a one-dimensional slice of the manifold.
First, we provide a useful lemma from \cite{ELA-KG:90}.
\begin{lemma}[Lemma 2.1.3, \cite{ELA-KG:90}]
  \label{lem:num_cont_meth}
  Let $\feqmfd:\real^{n+1} \rightarrow \real^n$ be a smooth nonlinear function
  such that $\feqmfd(\xeqmfd_0)=0$ for some $\xeqmfd_0\in \real^{n+1}$ and let
  the \emph{Jacobian matrix} $D \feqmfd(\xeqmfd_0) \in \real^{n \times (n+1)}$
  have maximum rank. Then, there exists a smooth curve $s \in [0,s_1) \mapsto
  c(s) \in \real^{n+1}$, parametrized with respect to arclength $s$, for some
  open interval $[0,s_1)$ such that for all $s \in [0, s_1)$: i) $c(0) =
  \xeqmfd_0$, ii) $\feqmfd(c(s))=0$, iii) $rank(D \feqmfd(c(s)))=n$, and iv) $\dot{c}(s) \neq 0$. \oprocend
\end{lemma}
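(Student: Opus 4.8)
The plan is to build the curve as the trajectory of an autonomous ordinary differential equation whose right-hand side is a smooth unit tangent field spanning the (one-dimensional) kernel of the Jacobian, and then to read off the four properties from the construction. The full-rank hypothesis is what makes such a tangent field well defined, so the first and most delicate task is to produce it smoothly.

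First I would work on the open set $\Omega = \{\xeqmfd \in \real^{n+1} : \operatorname{rank}(D\feqmfd(\xeqmfd)) = n\}$, which contains $\xeqmfd_0$ by hypothesis and is open because full rank is an open condition. On $\Omega$ the kernel of $D\feqmfd(\xeqmfd)$ is one-dimensional, but a kernel generator is only determined up to a nonzero scalar, so the issue is to pick one smoothly and with a consistent sign. To do this I would use the cofactor construction: for $i \in \{1,\dots,n+1\}$ set $w_i(\xeqmfd) = (-1)^{i+1}\det M_i(\xeqmfd)$, where $M_i$ is the $n\times n$ minor obtained from $D\feqmfd(\xeqmfd)$ by deleting the $i$-th column. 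By the Laplace expansion identity $w(\xeqmfd) \in \ker D\feqmfd(\xeqmfd)$, and $w(\xeqmfd)\neq 0$ on $\Omega$ since at least one minor is nonzero there; being a polynomial in the smooth entries of $D\feqmfd$, the map $w$ is smooth. Normalizing, $t(\xeqmfd) := w(\xeqmfd)/\|w(\xeqmfd)\|$ is a smooth unit vector field on $\Omega$ spanning the kernel, with its orientation fixed once and for all by the cofactor formula.

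Next I would invoke the standard existence/uniqueness theorem for ODEs. Since $t$ is smooth, hence locally Lipschitz, on $\Omega$, the initial value problem $\dot{c}(s) = t(c(s))$, $c(0) = \xeqmfd_0$, admits a unique smooth solution on a maximal interval $[0,s_1)$ along which $c(s)$ remains in $\Omega$ (with $s_1$ the exit time from $\Omega$, possibly $+\infty$). Properties (i) and (iv) are then immediate: $c(0)=\xeqmfd_0$ by the initial condition, and $\dot{c}(s)=t(c(s))$ has unit norm, so $\dot{c}(s)\neq 0$; the unit-speed property $\|\dot{c}(s)\|=1$ is exactly the statement that $c$ is parametrized by arclength. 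Property (iii) holds because $c(s)\in\Omega$ for all $s\in[0,s_1)$ by the choice of the interval. For (ii) I would differentiate along the curve, $\frac{d}{ds}\feqmfd(c(s)) = D\feqmfd(c(s))\,\dot{c}(s) = D\feqmfd(c(s))\,t(c(s)) = 0$, since $t(c(s))\in\ker D\feqmfd(c(s))$; hence $s\mapsto\feqmfd(c(s))$ is constant and equals $\feqmfd(\xeqmfd_0)=0$ at $s=0$, giving $\feqmfd(c(s))\equiv 0$.

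The main obstacle is precisely the smooth, consistently oriented selection of the kernel direction: a naive normalization of an arbitrarily chosen generator can fail to be smooth where that generator's components vanish, and the sign can flip. The cofactor formula is what removes this difficulty, supplying an explicit globally smooth generator on $\Omega$ with a fixed orientation; everything after that is a routine application of ODE theory together with the elementary differentiation argument for (ii). The only additional bookkeeping is to keep the flow inside $\Omega$, which is automatic since $\Omega$ is open and $s_1$ is defined as the exit time.
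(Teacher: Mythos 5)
Your argument is correct, but note that the paper does not prove this lemma at all: it is imported verbatim from Allgower and Georg (Lemma~2.1.3 of \cite{ELA-KG:90}), so there is no in-paper proof to compare against, and what you have written is a genuine, self-contained alternative to the source's argument. The textbook route is via the Implicit Function Theorem: maximal rank of $D\feqmfd(\xeqmfd_0)$ gives an invertible $n\times n$ submatrix, so the zero set is locally the graph of a smooth one-variable map, which one then reparametrizes by arclength; the characterization of $c$ as the solution of the defining initial value problem $\dot{\xeqmfd}=v^\top(\xeqmfd)$ is established afterwards as a separate step --- which is exactly how the paper deploys the material in Proposition~\ref{prop:eq_mnfd}, where statement (i) cites the lemma for existence and statement (ii) recovers the ODE by differentiating $\feqmfd(c(s))=0$. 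You instead build the curve directly as an integral curve of a smooth unit kernel field, merging existence and the ODE characterization into one step. The genuinely delicate point in this route --- a globally smooth, consistently oriented generator of $\ker D\feqmfd$ on the full-rank locus --- is handled correctly by the alternating-cofactor vector $w_i=(-1)^{i+1}\det M_i$: its orthogonality to each row of $D\feqmfd$ is the Laplace expansion of a determinant with a repeated row, and it cannot vanish where the rank is $n$. The trade-off is clear: the IFT proof is shorter and avoids any global section of the kernel, but yields the curve only as a local graph and needs a separate arclength reparametrization; your proof delivers unit speed (hence arclength parametrization and property (iv)) and the tangent ODE for free, at the price of the cofactor construction. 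Both are complete.
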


Let $\xeqmfd=[a_{lat}, \beta, \delta, \kappa_r]^T$ and let $\feqmfd(\xeqmfd)=0$ be
the nonlinear system in \eqref{eq:eq_manifold}, with $\feqmfd: \real^4
\rightarrow \real^3$. The following proposition shows that there exists a one
dimensional manifold of solution points.

\begin{proposition}[Equilibrium manifold well posedness]
  \label{prop:eq_mnfd}
  Given the nonlinear system in \eqref{eq:eq_manifold}, the following holds
  true:
  \begin{enumerate}
  \item there exists a smooth curve $s \in [0,s_1) \mapsto c(s) \in \real^{4}$,
    for some $s_1>0$, 
    such that $\feqmfd(c(s)) = 0$ for all $s\in [0,s_1)$;
  \item $c(s)$ is the local solution of
    \begin{equation} \label{prop:init_val_pr}
      \begin{split}
        \dot{\xeqmfd} = v^\top(\xeqmfd) \qquad \xeqmfd(0) = \xeqmfd_0,
      \end{split}
    \end{equation}
    where $v^\top(\xeqmfd)$ is the \emph{tangent vector induced by}
    $D \feqmfd(\xeqmfd)$.
  \end{enumerate} 
\end{proposition}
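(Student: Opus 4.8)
The plan is to obtain both parts as a direct application of Lemma~\ref{lem:num_cont_meth} to the map $\feqmfd:\real^4\rightarrow\real^3$ defined by \eqref{eq:eq_manifold} (with the normal forces \eqref{eq:eqmfd_fz} substituted and $v$ held fixed). The lemma needs a single seed point $\xeqmfd_0$ with $\feqmfd(\xeqmfd_0)=0$ at which the $3\times 4$ Jacobian $D\feqmfd(\xeqmfd_0)$ has maximal rank $3$. Once such a point is exhibited, part (i) is literally the conclusion of the lemma, and part (ii) follows by matching the arclength velocity of the curve with the kernel direction of the Jacobian.

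First I would produce a convenient seed, the straight-running constant-speed configuration $\xeqmfd_0=[a_{lat},\beta,\delta,\kappa_r]^T=[0,0,0,0]^T$. At this point both tires have zero longitudinal and lateral slip, so all four coefficients $\mu_{fx},\mu_{fy},\mu_{rx},\mu_{ry}$ vanish, while \eqref{eq:eqmfd_fz} collapses to the static weight split $-f_{fz}=mgb/(a+b)$ and $-f_{rz}=mga/(a+b)$, both strictly positive and hence inside the validity region of Proposition~\ref{prop:RoV}. Using $a_x=a_y=\dot\psi=0$ at $a_{lat}=0$, every left-hand side of \eqref{eq:eq_manifold} is identically zero, so $\feqmfd(\xeqmfd_0)=0$.

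The hard part will be the rank condition $\operatorname{rank}D\feqmfd(\xeqmfd_0)=3$. I would differentiate the three equations with respect to $a_{lat},\beta,\delta,\kappa_r$, propagating the chain rule through the slip angles $\beta_f,\beta_r$ and, where relevant, through \eqref{eq:eqmfd_fz}. The bookkeeping simplifies sharply at the seed: the equilibrium normal forces \eqref{eq:eqmfd_fz} depend only on $a_{lat},\beta,v$ and have vanishing gradient there, and every $\mu$ that is not differentiated is zero. The cleanest route is to exhibit one nonvanishing $3\times 3$ minor, namely the columns $(\beta,\delta,\kappa_r)$. Only the longitudinal-balance equation is sensitive to $\kappa_r$ at the seed (through the rear longitudinal stiffness $\partial_{\kappa_r}\mu_{rx}\neq 0$ times the static load $f_{rz}$), so the $\kappa_r$ column is $[\,\text{(nonzero)},0,0\,]^T$. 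This reduces the $3\times3$ determinant to the $2\times 2$ lateral/yaw block in the variables $(\beta,\delta)$, whose determinant evaluates to $(a+b)\,f_{fz}f_{rz}\,(\partial_\delta\mu_{fy})(\partial_\beta\mu_{ry})$ up to sign. This is nonzero because the static loads are nonzero and the front steering effectiveness $\partial_\delta\mu_{fy}$ and the rear cornering stiffness $\partial_\beta\mu_{ry}$ are nonzero slopes of the Pacejka curves at the origin for the data of Appendix~\ref{APP:params}. The only subtlety here is the chain-rule bookkeeping; the nonvanishing of the tire stiffnesses at the origin is exactly what forces full rank.

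With the seed and the full-rank Jacobian established, part (i) is the conclusion of Lemma~\ref{lem:num_cont_meth}: a smooth arclength curve $c$ with $\feqmfd(c(s))=0$, full-rank Jacobian along $c$, and $\dot c(s)\neq 0$. For part (ii) I would differentiate the identity $\feqmfd(c(s))=0$ in $s$ to get $D\feqmfd(c(s))\,\dot c(s)=0$, so $\dot c(s)$ lies in the kernel of $D\feqmfd(c(s))$; by property (iii) of the lemma this kernel is one-dimensional along $c$. The arclength normalization gives $\|\dot c(s)\|=1$, so $\dot c(s)$ is the unique unit kernel vector up to sign. Fixing the orientation so that the sign convention defining $v^\top(\xeqmfd)$ holds at $s=0$ and invoking continuity of $\dot c$ along the connected curve yields $\dot c(s)=v^\top(c(s))$ for all $s$. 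Hence $c$ is the local solution of the initial value problem \eqref{prop:init_val_pr}, completing part (ii).
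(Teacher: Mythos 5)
Your proposal follows essentially the same route as the paper's proof: the same seed point $\xeqmfd_0=[0,0,0,0]^T$ (where all $\mu$'s vanish, giving $\feqmfd(\xeqmfd_0)=0$), an appeal to Lemma~\ref{lem:num_cont_meth} for part (i), and for part (ii) differentiation of $\feqmfd(c(s))=0$ to identify the unit-speed tangent $\dot c(s)$ with the one-dimensional kernel of $D\feqmfd(c(s))$, i.e.\ with $v^\top$. The only difference is that you make explicit the rank-three verification (via the nonvanishing minor $(a+b)f_{fz}f_{rz}(\partial_\delta\mu_{fy})(\partial_\beta\mu_{ry})$, which checks out), whereas the paper simply asserts it ``by explicit calculation.''
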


\begin{proof}
  To prove statement (i), we use Lemma~\ref{lem:num_cont_meth}.
  The nonlinear function $\feqmfd$ contains sums and products of
  trigonometric and power functions, thus it is smooth.  Using the expression of
  the combined slip forces introduced in Section~\ref{sec:tires_gen_forces}, for
  $\xeqmfd_0=[0,0,0,0]^T$ we have $\mu=0$, so that 
  $\feqmfd(\xeqmfd_0)=0$. Moreover, by explicit calculation, the Jacobian matrix
  at $\xeqmfd_0$ has rank three.

  To prove statement (ii), we differentiate $\feqmfd(c(s))=0$ with respect to
  the arc-length $s$. The tangent $\dot{c}(s)$ satisfies 
  $D \feqmfd(c(s))\dot{c}(s)=0, \,\,\, \|\dot{c}(s)\|=1 \;\; \forall s\in
  [0,s_1)$.
  Hence $\dot{c}(s)$ spans the one-dimensional kernel $ker(D \feqmfd(c(s)))$, or
  equivalently, $\dot{c}(s)$ is orthogonal to all rows of $D \feqmfd(c(s))$.
  In other words, the unique vector $\dot{c}(s)$ is the tangent vector induced
  by $D \feqmfd(c(s))$, $v^\top(\xeqmfd)$. Using the Implicit Function
  Theorem, e.g., \cite{MWH:97}, 
  the tangent vector $v^\top(\xeqmfd)$ depends smoothly on
  $D \feqmfd(c(s))$. Thus, $c$ is the solution curve of the initial value
  problem in \eqref{prop:init_val_pr}, which concludes the proof.
\end{proof}


%
In order to numerically trace the curve $c$ efficiently, we use a
predictor-corrector method.  The main idea is to generate a sequence of
points along the curve $\xeqmfd_i$, $i=1,2,\ldots$, that satisfy a given
tolerance, say $\|\feqmfd(\xeqmfd_i)\| \leq \nu$ for some $\nu > 0$.
So, for $\nu>0$ sufficiently small, there is a unique parameter value $s_i$ such
that the point $c(s_i)$ on the curve is nearest to $\xeqmfd_i$ in Euclidean
norm.  To describe how points $\xeqmfd_i$ along the curve $c$ are generated,
suppose that a point $\xeqmfd_i \in \real^4$ satisfies the chosen tolerance
(i.e. $\|\feqmfd(\xeqmfd_i)\| \leq \nu$). If $\xeqmfd_i$ is a regular point of
$\feqmfd$, then there exists a unique solution curve $c_i:[0,s_1) \rightarrow
\real^4$ which satisfies the initial value problem \eqref{prop:init_val_pr} with
initial condition $\xeqmfd(0) = \xeqmfd_i$.

To obtain a new point $\xeqmfd_{i+1}$ along $c$, we make a \emph{predictor
  step} as a simple numerical integration step for the initial value problem.
We use an \emph{Euler  predictor}:
  $\alpha_{i+1} = \xeqmfd_i + \epsilon \,v^\top(\xeqmfd)$, 
where $\epsilon>0$ represents a suitable stepsize.
%
The corrector step computes the point $\omega_{i+1}$ on $c$ which is nearest
to $\alpha_{i+1}$. The point $\omega_{i+1}$ is found by solving the optimization
problem
\begin{equation} \label{eq:correction}
  \|\omega_{i+1}-\alpha_{i+1}\|=\min_{\feqmfd(\omega)=0}\|\omega-\alpha_{i+1}\|
  \, .
\end{equation}
If the stepsize $\epsilon$ is sufficiently small (so that the predictor point
$\alpha_{i+1}$ is sufficiently close to the curve $c$) the minimization problem
has a unique solution $\omega_{i+1}$. We compute $\omega_{i+1}$ by using a
Newton-like method. The \emph{Newton point} $\NN(\alpha)$ for approximating the
solution of \eqref{eq:correction} is given by
$\NN(\alpha) = \alpha - D \feqmfd(\alpha)^\dag \feqmfd(\alpha)$.
%


The predictor-corrector continuation method used in the paper thus consists of repeatedly
performing these predictor and corrector steps as shown in the pseudo-code below.
\vspace{-2ex}
\begin{center}
  \begin{minipage}{0.7\linewidth}
\begin{algorithm}[H]
  \caption{Predictor-corrector continuation method}
  \label{alg:continuation_method}
  \begin{algorithmic}
    \REQUIRE initial equilibrium condition $\xeqmfd_0$ such that
    $\feqmfd(\xeqmfd_0)=0$ \FOR{$i = 0, 1, 2 \ldots$} \STATE set the initial
    steplength $\epsilon_i=\overline{\epsilon}$;
    \LOOP \STATE get predictor step: $\alpha_{i+1} = \xeqmfd_i + \epsilon_i \,
    v^\top(\xeqmfd)$;
    \STATE search corrector term:\\
    $~~~~~~~~~~$    $\omega_{i+1} = \alpha_{i+1} - D \feqmfd(\alpha_{i+1})^\dag \feqmfd(\alpha_{i+1})$;\\
    $~~~~~~~~~~$ $\alpha_{i+1}=\omega_{i+1}$;
    \STATE \textbf{if} convergence \textbf{then} break;
    \STATE \textbf{else} update step-length $\epsilon_{i+1} = \frac{\epsilon_{i}}{2}$;\\
    \textbf{end if}
    \ENDLOOP
    \STATE $\xeqmfd_{i+1} = \omega_{i+1}$;
    \ENDFOR
  \end{algorithmic}
\end{algorithm}
\end{minipage}
\end{center}

We compute and compare the equilibrium manifold for the car model with and
without load transfer (i.e. LT-CAR and bicycle model). The model parameters are
the one of a sports car with rear-wheel drive transmissions given in
Appendix~\ref{APP:params}.

\begin{figure*}[ht!]
  \begin{center}
  \!\subfloat[]{\includegraphics[scale=.28]{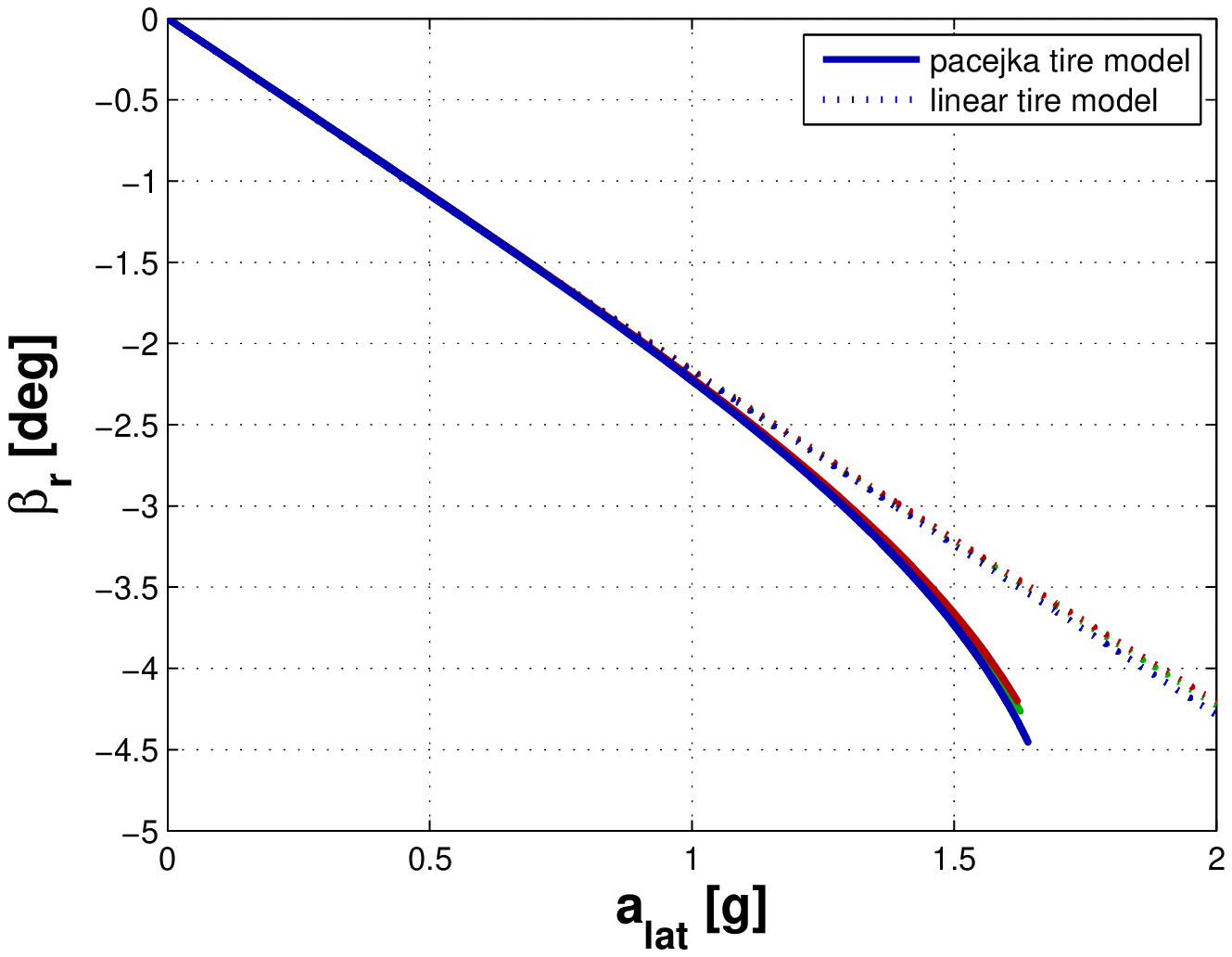} \label{fig:eqmf_fz_betar}}
  \!\!\!\!\!\subfloat[]{\includegraphics[scale=.28]{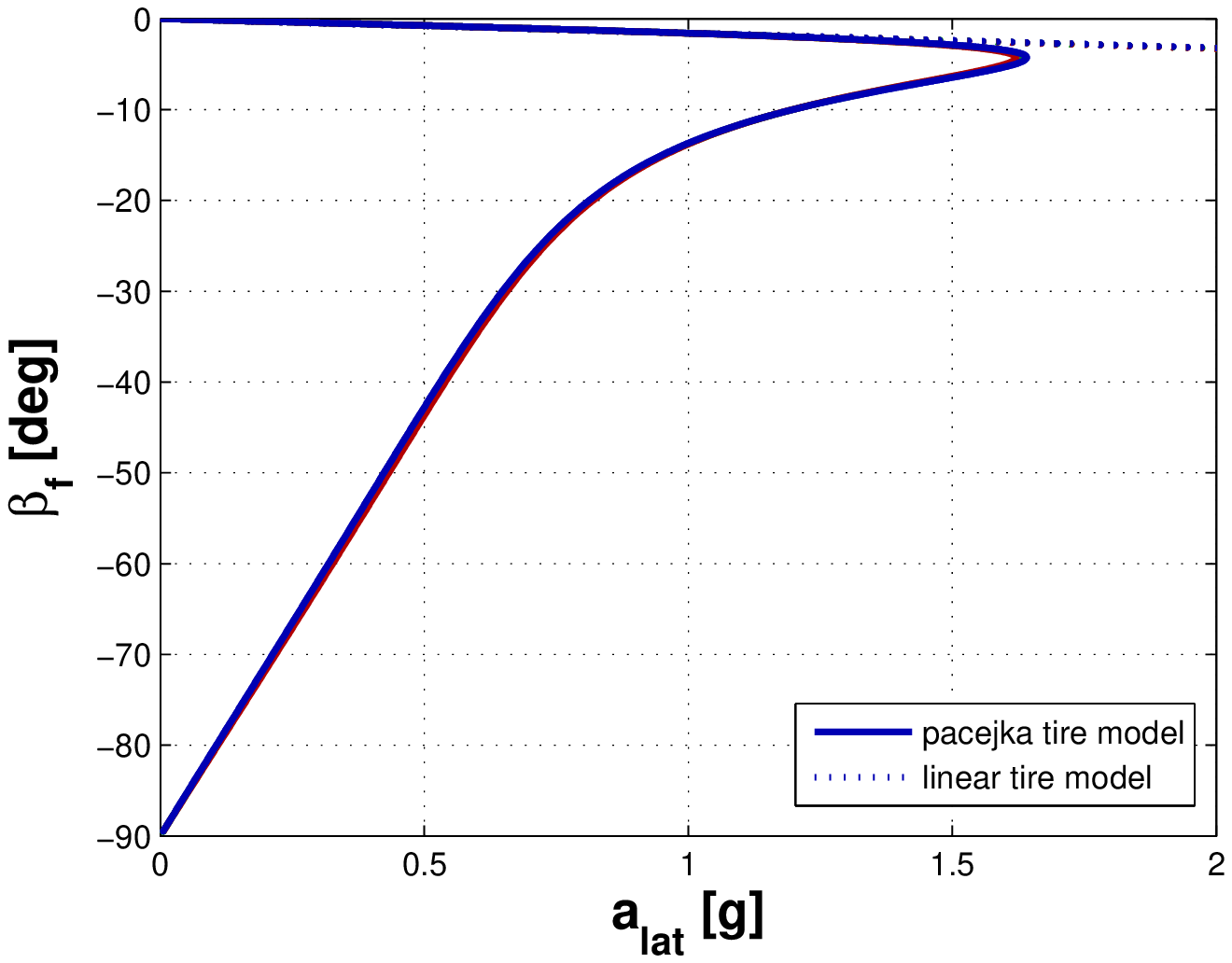} \label{fig:eqmf_fz_betaf}}
  \!\!\!\!\!\subfloat[]{\includegraphics[scale=.28]{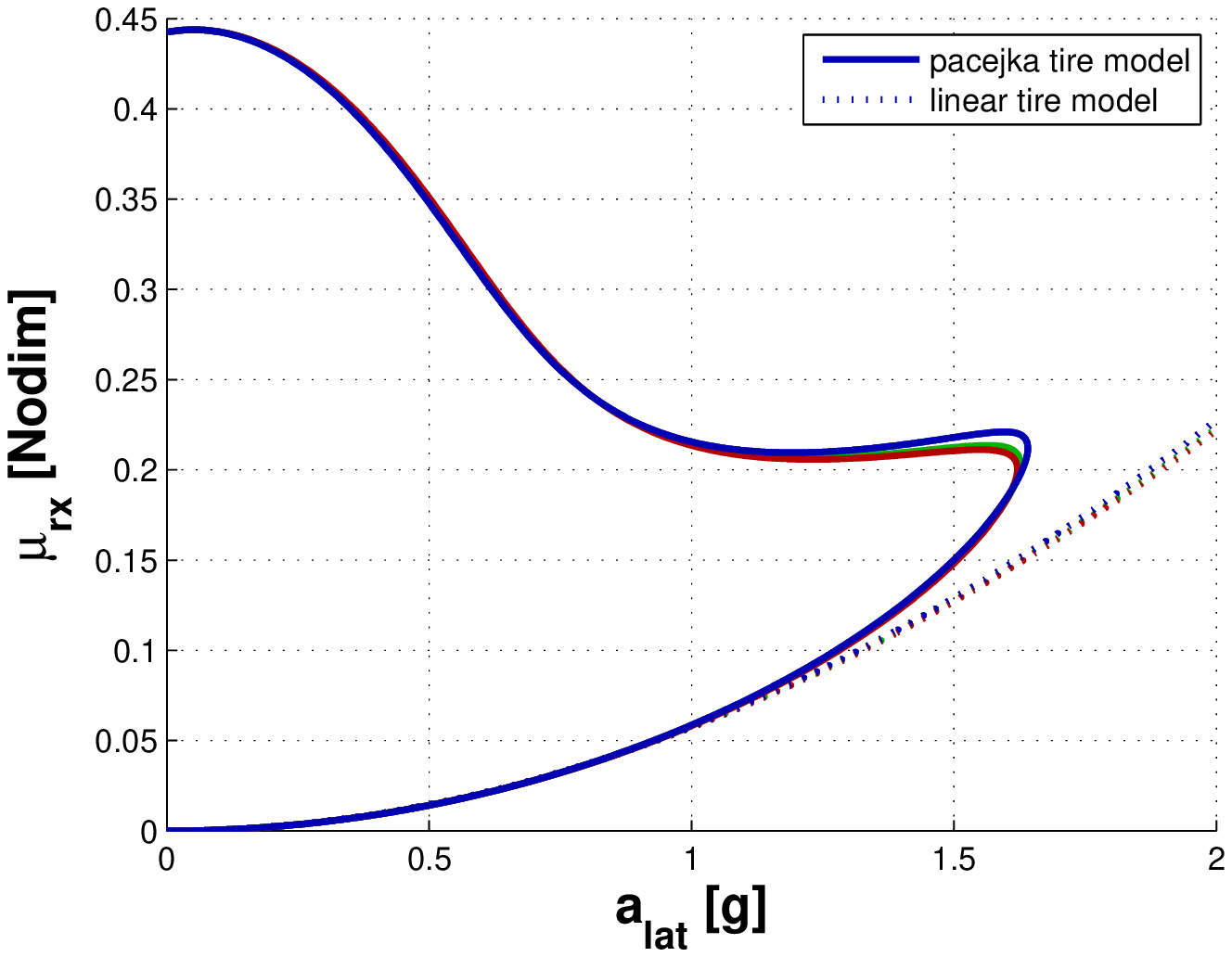} \label{fig:eqmf_fz_murx}}
  \!\subfloat[]{\includegraphics[scale=.28]{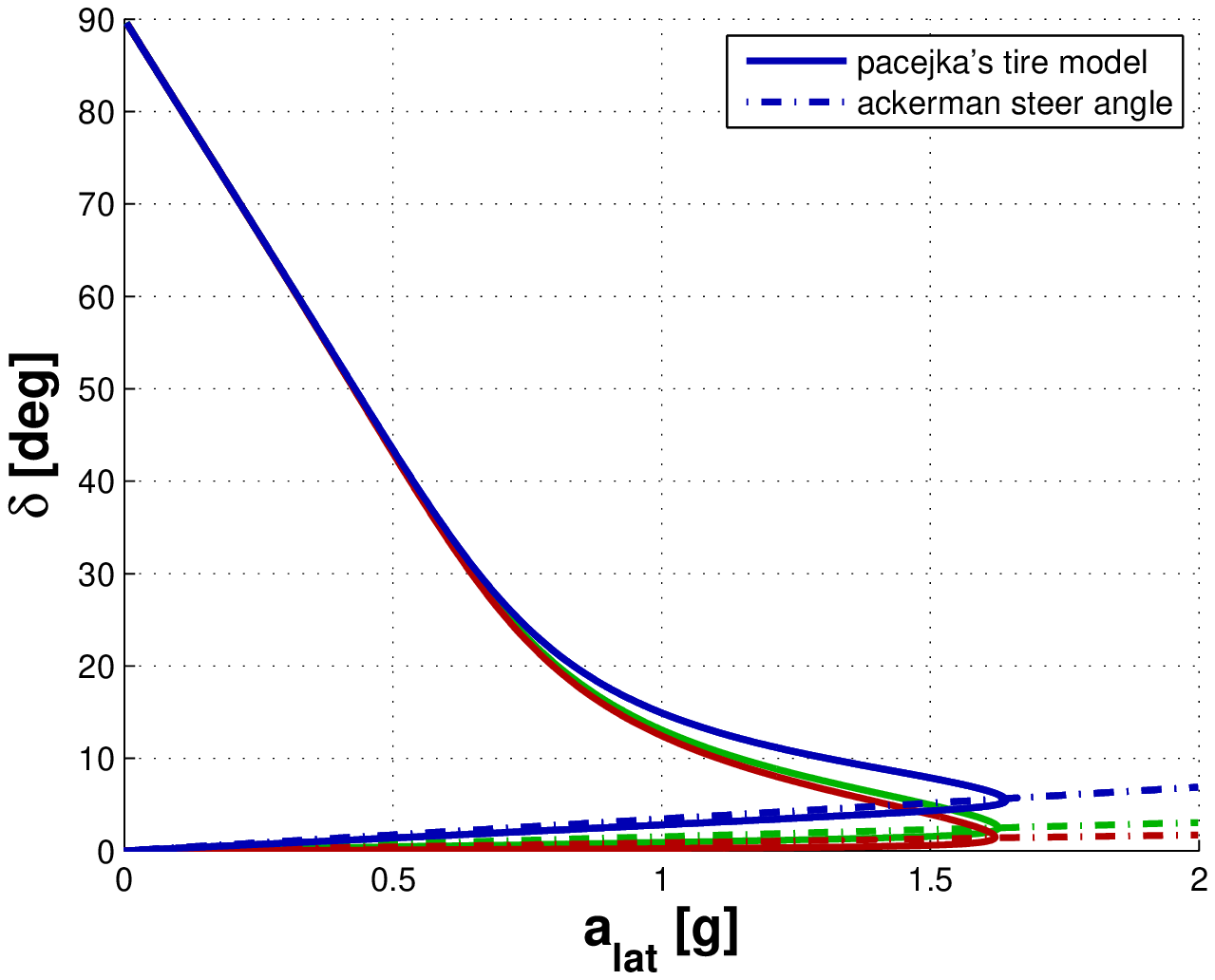} \label{fig:eqmf_fz_delta}}
%

  \!\subfloat[]{\includegraphics[scale=.28]{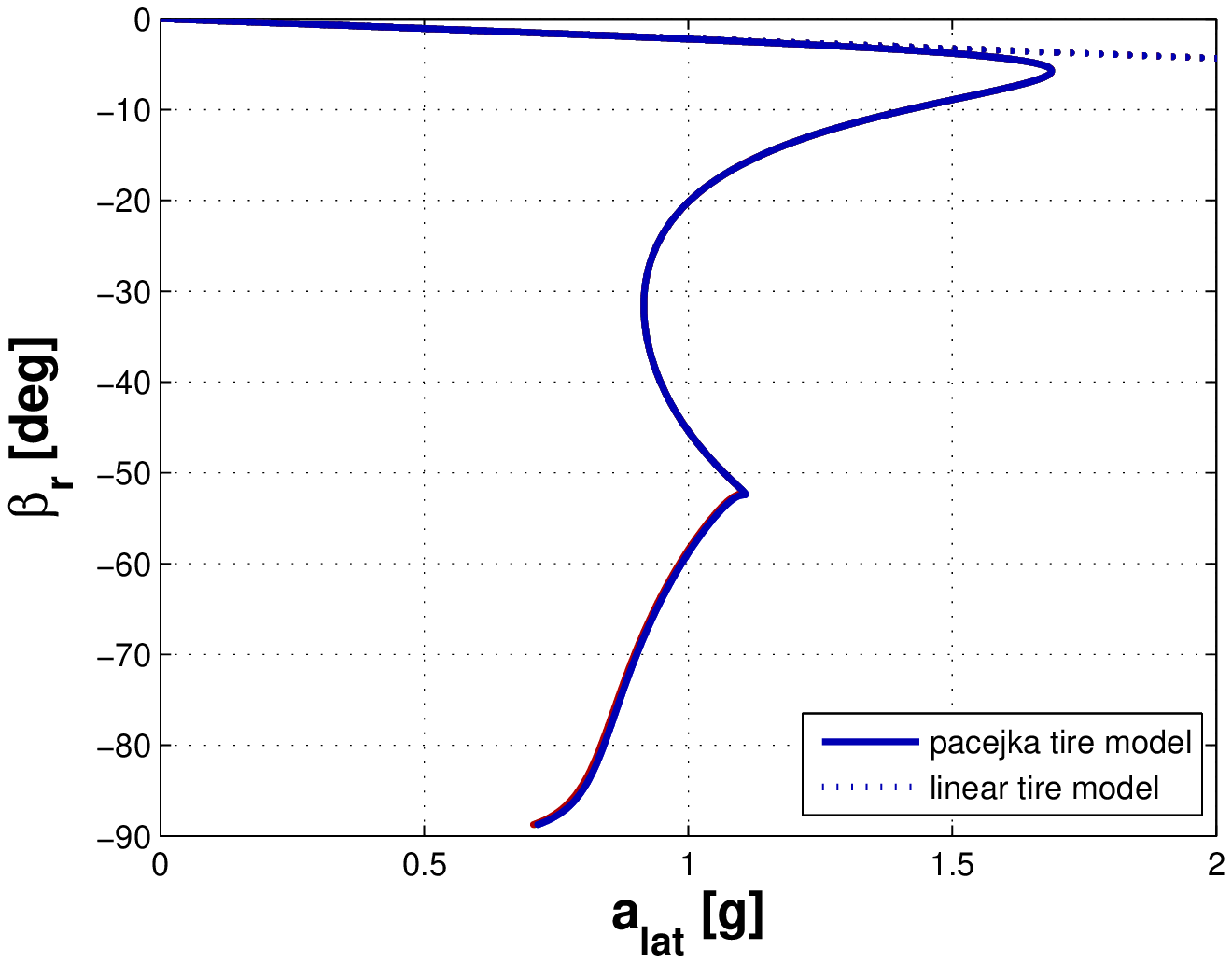} \label{fig:eqmf_nofz_betar}}
  \!\!\!\!\!\subfloat[]{\includegraphics[scale=.28]{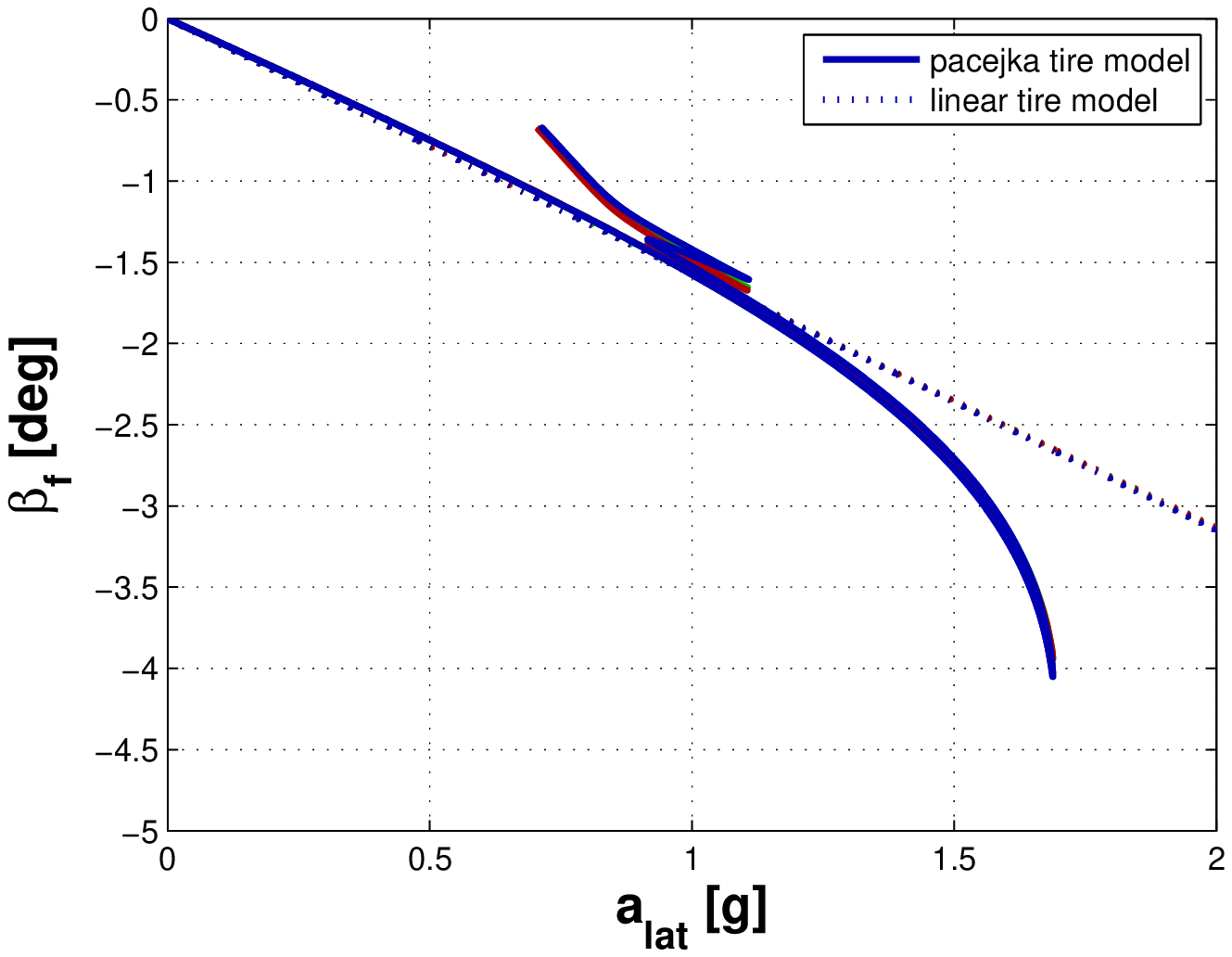} \label{fig:eqmf_nofz_betaf}}
  \!\!\!\!\!\subfloat[]{\includegraphics[scale=.28]{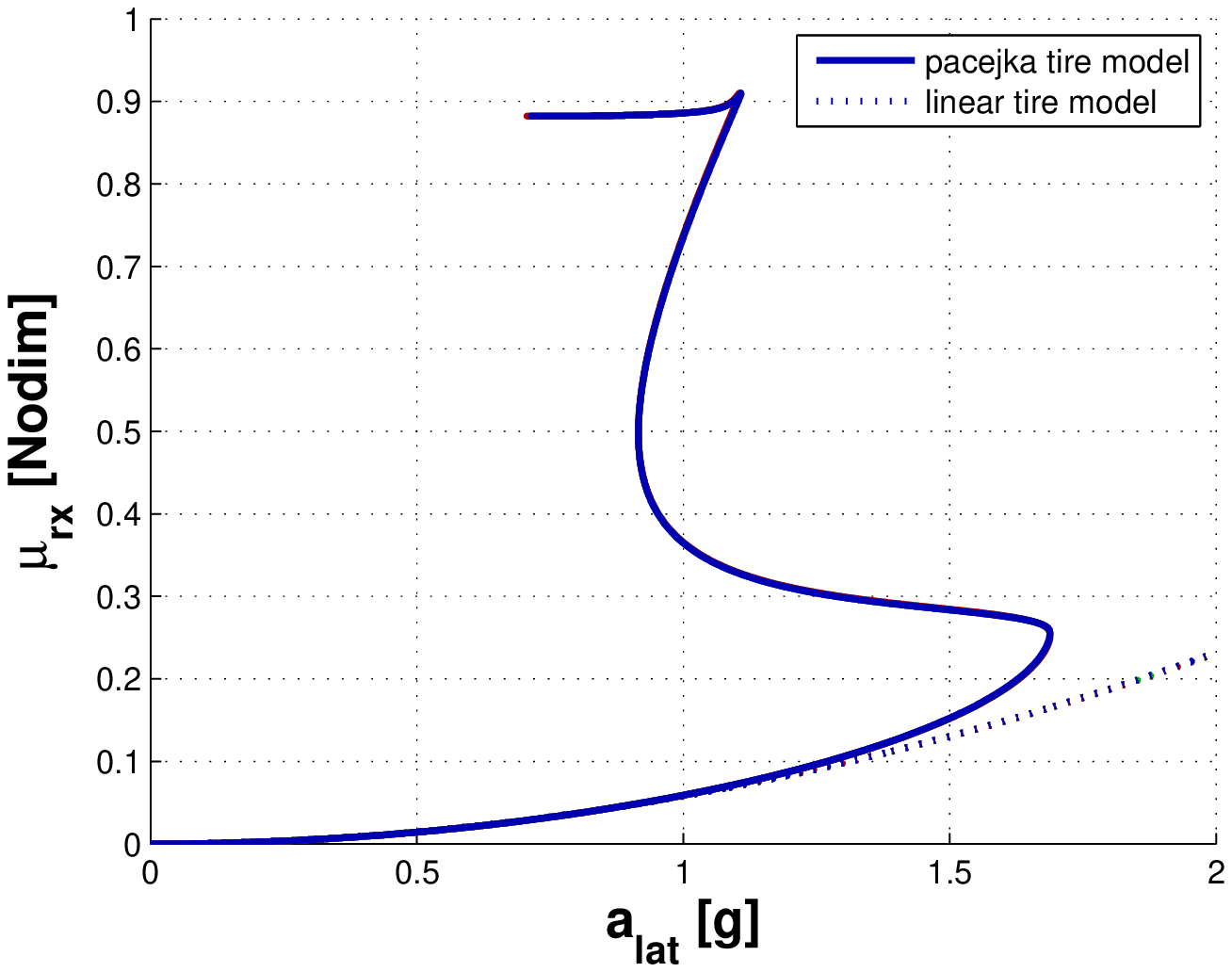} \label{fig:eqmf_nofz_murx}}
  \!\subfloat[]{\includegraphics[scale=.28]{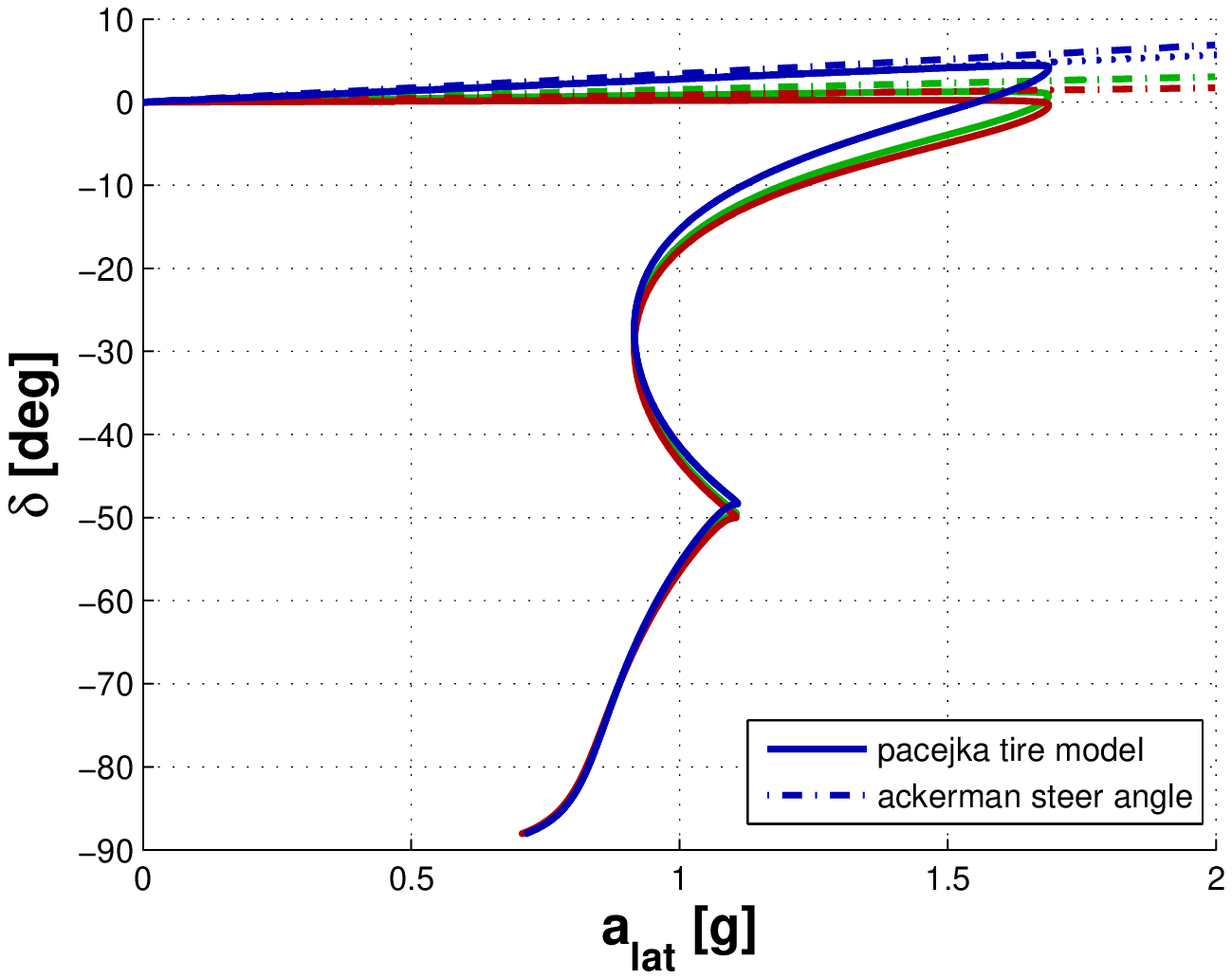} \label{fig:eqmf_nofz_delta}}
  \caption{\small Equilibrium manifold for a rear-wheel drive sports car
      with longitudinal load transfer in (a)-(d), and without load transfer in
      (e)-(h). Specifically: rear and front sideslip, longitudinal force coefficient, and steer angle for $v = (20, 30,
      40)$ m/s.  Dot lines in (a)-(b)-(c) and (e)-(f)-(g) are the equilibria
      with linear tire model. The dash-dot line in (d) and (h) is the Ackerman
      steer angle.}
    \label{fig:eqmf_berspo}
%

    \!\subfloat[]{\includegraphics[scale=.28]{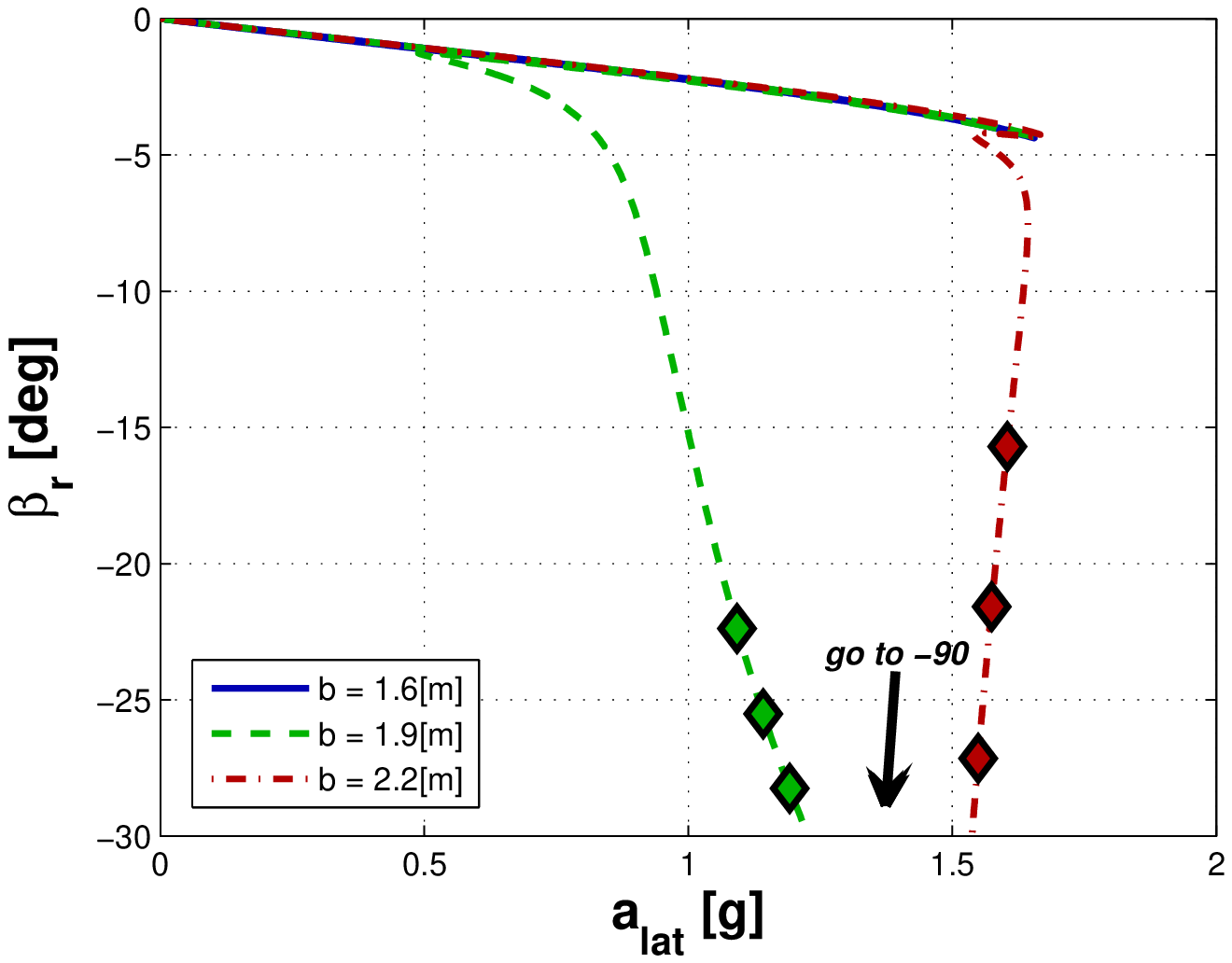} \label{fig:eqmf_varb_betar}}
    \!\!\!\!\!\subfloat[]{\includegraphics[scale=.28]{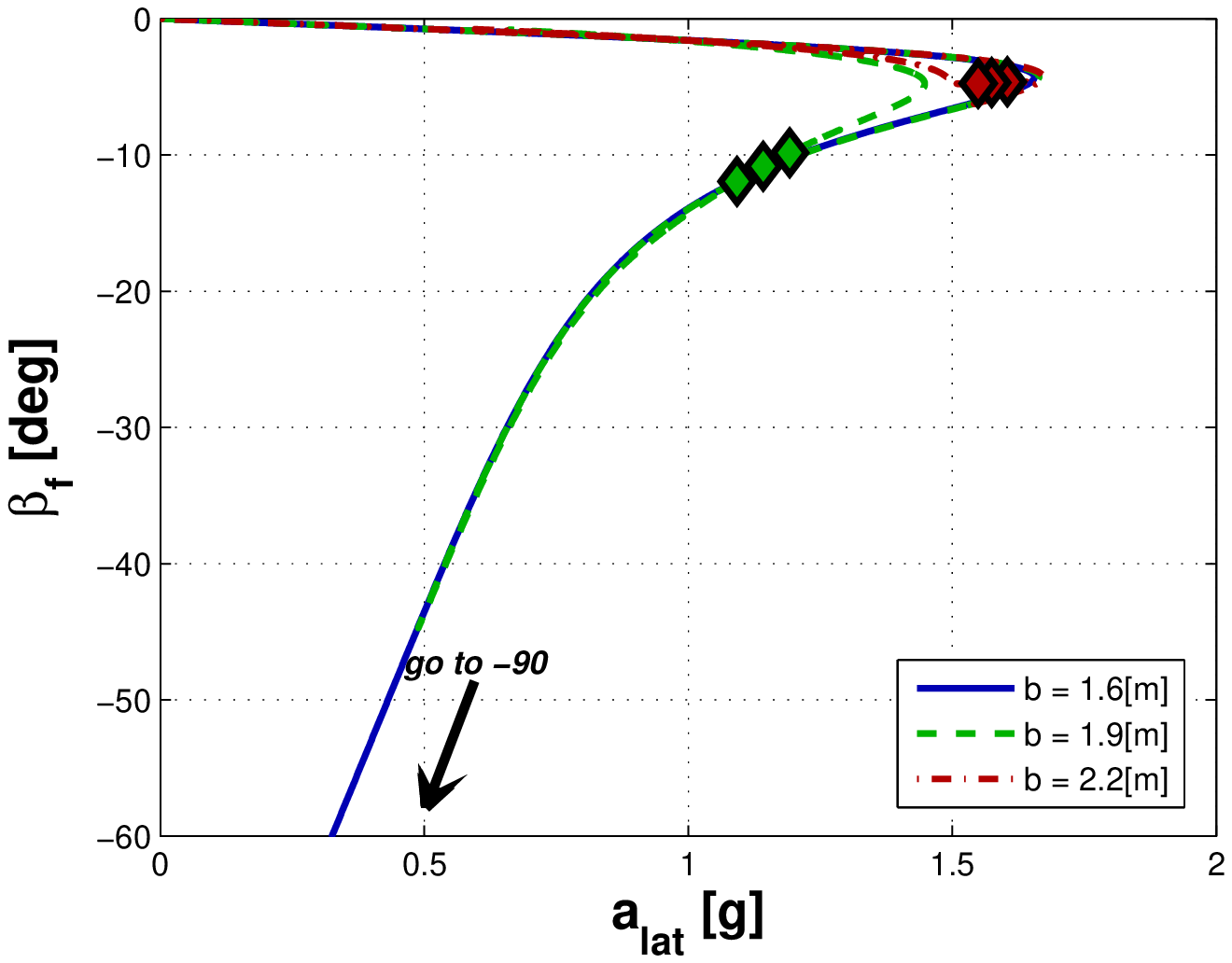} \label{fig:eqmf_varb_betaf}}
    \!\subfloat[]{\includegraphics[scale=.28]{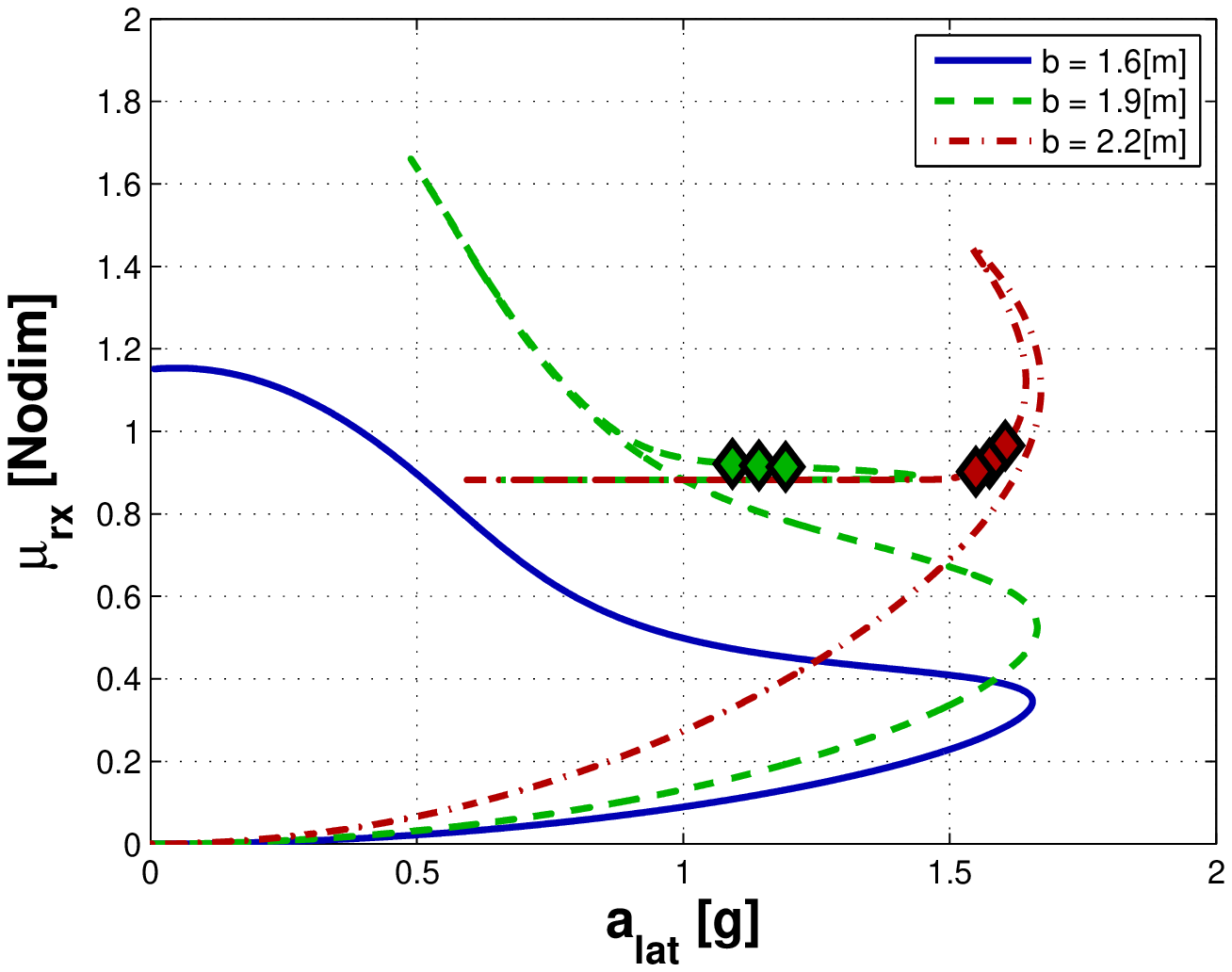} \label{fig:eqmf_varb_murx}}
    \!\!\!\!\!\subfloat[]{\includegraphics[scale=.28]{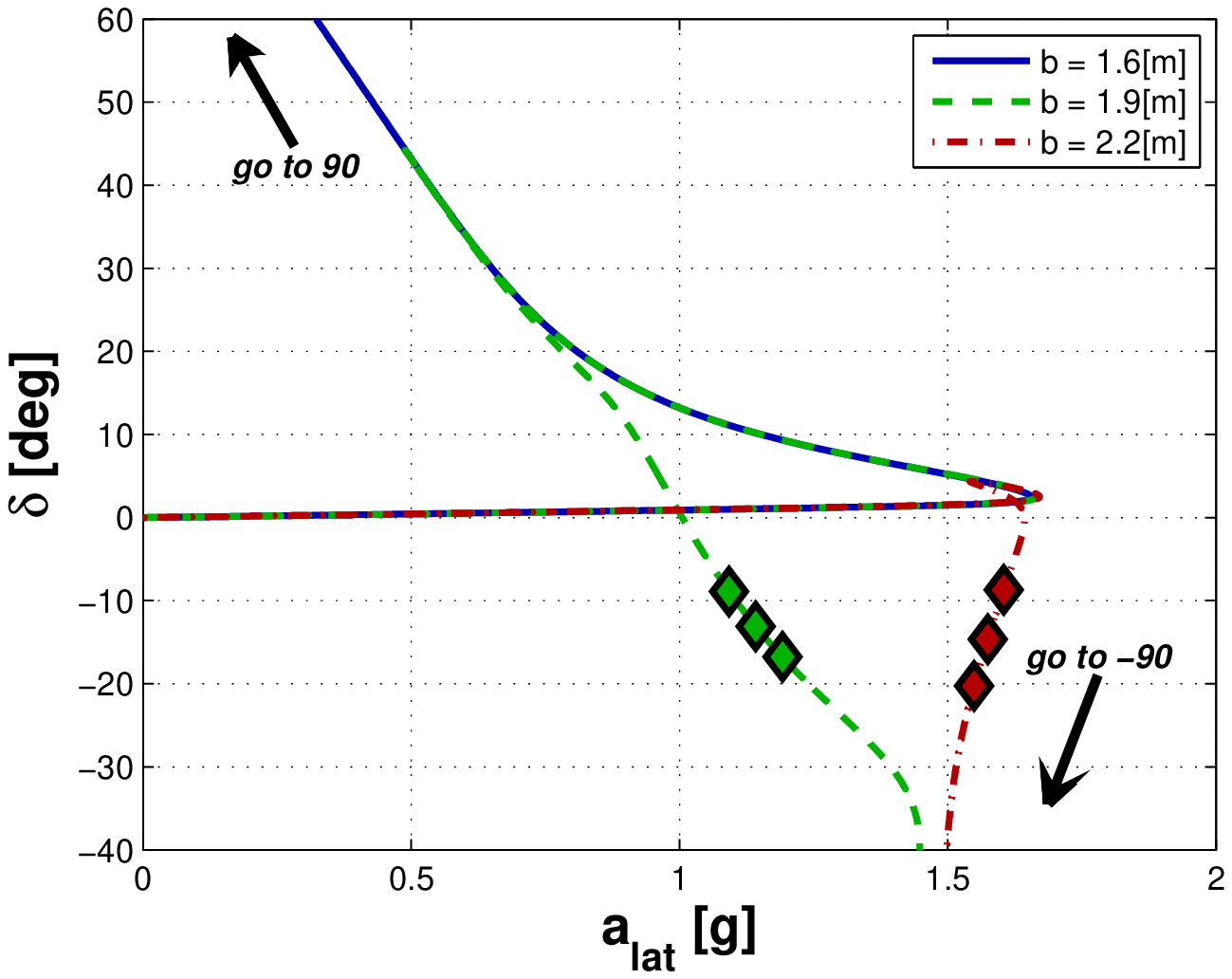} \label{fig:eqmf_varb_delta}}
    \caption{\small Equilibrium manifold for a rear-wheel drive
    sports car for different positions of the center of mass. Specifically: rear
      sideslip, front sideslip, longitudinal force coefficient and steer
      angle for $v = 30$ m/s, $b=(1.6,1.9,2.1)$ m and $a+b=2.45$ m. In (d)
      the red and green diamond markers show three equilibrium points with steer
      angle opposite to the direction of the turn (counter-steering).
    }
    \label{fig:eqmfd_varying_b}
%
%
%
  \end{center}
\vspace{-2ex}
\end{figure*}

Some slices of the equilibrium manifold are shown in
Figure~\ref{fig:eqmf_berspo}.
%
The plots are given only for positive values of the lateral acceleration due to
the symmetry of the equilibrium manifold. Indeed, the rear and front sideslip,
and the steer angle are symmetric functions of $a_{lat}$, while the
longitudinal force coefficient is antisymmetric.

For low longitudinal and lateral slips a first
class of equilibria appears. These equilibria are close to the ones with the
linear tire approximation (the solid lines in Figures~\ref{fig:eqmf_berspo} are close to the dot lines).
Indeed, for low slips ($\beta_r,\beta_f<5$ deg and $\kappa_r < 0.05$) the
tires work within their linear region as appears in Figures~\ref{fig:pacejka_long}~and~\ref{fig:pacejka_lat}.
To characterize the vehicle behavior in this region, we can use,
\cite{TG:92}, the \emph{understeer gradient}
\[
  K_{us}(a_{lat};v) = \frac{\partial\delta(a_{lat};v)}{\partial a_{lat}} - K_a,
\]
where $K_a = \frac{a+b}{v^2}$ is called Ackerman steer angle gradient. The
vehicle is said to be understeering if $K_{us}>0$, neutral if $K_{us}=0$ and
oversteering if $K_{us}<0$.
From a graphical point of view, the understeering behavior can be measured by
looking at how much the curve $a_{lat}\mapsto\delta(a_{lat};v)$, for fixed
$v$, departs from the line $a_{lat}\mapsto K_a a_{lat}$.
%
As shown in Figure~\ref{fig:eqmf_fz_delta} and \ref{fig:eqmf_nofz_delta}, the steer angle
gradient is slightly negative, which suggests an oversteering behavior $K_{us} < 0$.
It is worth noting that if the relationship between $a_{lat}$ and $\beta$ ($=\beta_r$) is approximately linear, then the load transfer is approximately quadratic as a function of $a_{lat}$ by \eqref{eq:eqmfd_fz}. This is in fact what occurs in this case (see Figure~\ref{fig:eqmf_fz_betar}), so that $-f_{rz}$ increases approximately quadratically with $a_{lat}$ (with $-f_{fz}$ decreasing by the same amount). 
That is, with increased lateral acceleration, we see that the rear tire(s) becomes more effective (due to increased loading) and the front becomes less effective. 

For high values of the longitudinal and lateral slips the equilibrium manifolds
depart from their linear-tire approximation. 
Indeed, the linear tires, without
force saturation, can generate a lateral force for a wider range of lateral
acceleration. 
We observe a significantly different structure of the equilibrium manifold for
the two models, which gives a first evidence of the importance of taking into
account the load transfer phenomenon.
Specifically, since the available tire force is limited for the LT-CAR model, the achievable lateral acceleration is also limited. This limit occurs as a smooth turning of the equilibrium manifold as seen in, e.g., Figures~\ref{fig:eqmf_fz_betaf} and ~\ref{fig:eqmf_fz_murx}, and the manifold can be continued with steering going all the way to $90$[deg] at which point the front tire(s) is just pushing with no lateral force. 
The steady-state behavior of the LT-CAR is clear: for a given $v$ and $a_{lat}$, the required $\beta$ is close to that predicted by the linear tire model while there are two solutions (except at the maximum value of $a_{lat}$) for the steering angle providing rather different values of $\beta_f$ and $\kappa_r$. 
This straightforward behavior of $\beta$ seems to be related to the fact that the rear tire(s) becomes more effective as $a_{lat}$ is increased. 
In contrast, for the bicycle model (i.e., without load transfer) the front tire(s) retains a higher loading and its equilibrium manifold shows that it is the front sideslip $\beta_f$ that keeps a small angle (so that the front wheel is nearly aligned with the direction of travel) while the rear sideslip is allowed to be quite large, see Figures~\ref{fig:eqmf_nofz_betar} and~\ref{fig:eqmf_nofz_betaf}. 
%
%
%
Topologically, the equilibrium manifolds of the system with and without load transfer are strikingly different. 
%
Clearly, the load
transfer is the responsible of this difference due to its modulating action on
the ground forces.

The predictor-corrector continuation method can be also used to perform a
sensitivity analysis of the equilibrium manifold with respect to the car
parameters (as, e.g., mass, moment of inertia, center of mass position).
In Figure~\ref{fig:eqmfd_varying_b} we highlight the results obtained
when varying the center of mass position along the body longitudinal axis. By
setting the sports car inertial parameters, we compute the manifold varying
the value of $a$ and $b$ with constant wheelbase $a+b=2.45$ m.
%
 %
%
When the center of mass is moved over the half wheelbase toward the front
axle, the manifold has a significantly different structure (green and red
lines). In particular, the equilibria at highest rear lateral and
longitudinal slips, highlighted with the red diamond markers, are achieved
with steer angle opposite to the direction of the turn (counter-steering).
This car set up resembles the one of rally cars which, indeed, take advantage
of the counter-steering behavior in performing aggressive turns.
The significant change of the equilibrium manifold with respect to the position
of the center of mass suggests that the equilibrium manifold sensitivity
analysis can be used as a design tool to optimize the car performance.
Thus, a deep investigation of these and other parameters on the shape of the equilibrium manifold (and hence on the nonlinear system behavior) would be an interesting area of research.

\section{Nonlinear optimal control based trajectory exploration}
\label{sec:traj_explor}
In this section we describe the optimal control based strategies used to explore the
dynamics of the car vehicle and provide numerical computations showing their
effectiveness.

\subsection{Exploration strategy based on least-square optimization}
Complex dynamic interactions make the development of maneuvers highly
nontrivial. To this end, we use nonlinear least squares trajectory optimization
to explore system trajectories. That is, we consider the optimal control problem
\begin{equation} \label{eq:OCP}
\begin{split}
  \min &\;\; \frac{1}{2}\int_0^T \|\xrm(\tau)-\xrm_{d}(\tau)\|_Q^2 +
  \|\urm(\tau)-\urm_{d}(\tau)\|_R^2d\tau  
+ \frac{1}{2}\|\xrm(T)-\xrm_{d}(T)\|_{P_1}^2\\
  \subj &\;\; \dot \xrm (t) = f(\xrm (t),\urm (t)) \qquad \xrm(0) = \xrm_0,
\end{split}
\end{equation}
where $Q$, $R$ and $P_1$ are positive definite weighting matrices, for
$z\in\real^n$ and $W\in\real^{n\times n}$ $\|z\|_W^2 = z^T W z$,
%
%
and $(\xrm_d(\cdot), \urm_d(\cdot))$ is a desired curve.

We propose an optimal control based strategy to solve the optimal control
problem~\eqref{eq:OCP} and compute aggressive vehicle trajectories.
The strategy is based on the projection operator Newton method, \cite{JH:02},
see Appendix~\ref{APP:PO_appr}.
%
We want to stress that the projection operator Newton method, as any other a
descent method, guarantees the convergence to a local minimum of the optimal
control problem in~\eqref{eq:OCP}. Thus, a naive application of this method (or
any other available optimal control solver) may let the algorithm converge to
a (local minimum) trajectory that is too far from the desired curve and does not
contain useful information on the vehicle capabilities.
%
In order to deal with this issue, we develop an exploration strategy based on
the following features: (i) choose a desired (state-input) curve that well
describes the desired behavior of the vehicle, (ii) embed the original optimal
control problem into a class of problems parametrized by the desired curve, and
(iii) design a continuation strategy to morph the desired curve from an initial
nonaggressive curve up to the target one.
%
%

\subsubsection{Desired Curve design}
First, we describe how to choose the desired curve. The path and the
velocity profile to follow on that path, are usually driven by the
exploration objective. Thus, the positions $x_{d}(t)$ and
$y_{d}(t)$ 
and the velocity $v_{d}(t)$, with $t \in [0, T]$, of the desired curve are
assigned. For example, in the next sections we describe two maneuvers where
we want to understand the vehicle capabilities in following respectively a
chicane at ``maximum speed'' and a real testing track at constant speed.

How to choose the other portion of the desired curve (i.e. the remaining
states and the inputs) strongly affects the exploration process.
In order to choose this portion of the desired curve, we use a quasi
trajectory that, with some abuse of notation, we call \emph{quasi-static
  trajectory}.

Given $x_{d}(t)$, $y_{d}(t)$, $v_{d}(t)$, and the curvature
$\sigma_{d}(t)$, $t \in [0,T]$, for each $t \in [0, T]$, we impose the
equilibrium conditions~\eqref{eq:constr} for the desired velocity and path
curvature at time $t$. That is, posing $v_{qs}(t) = v_{d}(t)$ and
$\dot{\psi}_{qs}(t) = v_{d}(t) \sigma_{d}(t)$, we compute the
corresponding equilibrium value for the sideslip angle, $\beta_{qs}(t)$,
the yaw rate, $\dot{\psi}_{qs}(t)$, and the yaw angle,
$\psi_{qs}(t)$, together with the steer angle, $\delta_{qs}(t)$, and
the rear and front longitudinal slips, $\kappa_{r_{qs}}(t)$ and $\kappa_{f_{qs}}(t)$,  by solving the nonlinear
equations~\eqref{eq:eq_manifold}.
Thus, the quasi-static trajectory $(\xrm_{qs}(t), \urm_{qs}(t))$, $t \in [0,
T]$, is given by
\begin{align*}
  \xrm_{qs}(t) =& [x_{d}(t), \, y_{d}(t), \, \psi_{qs}(t), \, v_{d}(t), \, \beta_{qs}(t), \, v_{d}(t)\sigma_{d}(t)]^T \, ,\\
  \urm_{qs}(t) =& [\delta_{qs}(t), \, \kappa_{r_{qs}}(t), \kappa_{f_{qs}}(t)]^T \,.
\end{align*}

\begin{remark}
  \label{rmk:closeness_quasi-static}
  We stress that the quasi-static trajectory is not an LT-CAR trajectory since
  it does not satisfy the dynamics.
  However, experience shows that, 
  for low values of the (longitudinal and lateral) accelerations, the
  quasi-static trajectory is close to the trajectory manifold.  \oprocend
\end{remark}

The above considerations suggest that the quasi-static trajectory represents a
reasonable guess of the system trajectory on a desired track for a given
velocity profile. Thus, when only the desired position and velocity curves are
available, we set the desired curve as the quasi-static trajectory,
i.e. $\xi_{d} = (\xrm_{qs}(\cdot), \urm_{qs}(\cdot))$.
In doing this choice we remember that the positions and velocity profiles are
the ones we really want to track, whereas the other state profiles are just a
guess. Thus, we will weight the first much more than the latter.

\begin{remark}
  Since we are interested in exploring ``limit'' vehicle capabilities, most of
  the times, as it happens in real prototype tests, we will study aggressive
  maneuvers characterized by high levels of lateral acceleration. Thus, it can
  happen that a quasi-static trajectory can not be found (we are out of the
  equilibrium manifold).  If this is the case, we generate the desired curve by
  using the linear tires car model, LT$^2$-CAR, discussed in
  Section~\ref{sec:tires_gen_forces}, so that higher lateral accelerations can
  be achieved.
  In this way we can compute the quasi-static trajectory, and thus the desired
  curve, for more aggressive path and velocity profiles. \oprocend
\end{remark}


\subsubsection{Initial trajectory and optimal control embedding}
With the desired curve in hand we still have the issue of choosing the initial
trajectory to apply the projection operator Newton method.
To design the initial trajectory, we could choose an equilibrium trajectory
(e.g. a constant velocity on a straight line). However, such naive initial
trajectory could lead to a local minimum that is significantly far from the
desired behavior or cause a relatively high number of iterations.
From the considerations in Remark~\ref{rmk:closeness_quasi-static}, we know that
a quasi-static trajectory obtained by a velocity profile that is not ``too
aggressive'' is reasonably close to the trajectory manifold.

These observations motivate and inspire the development of an embedding and
continuation strategy.
We parametrize the optimal control problem in \eqref{eq:OCP} with respect
to the desired curve. Namely, we design a \emph{family of desired curves} that
continuously morph a quasi-static trajectory with a ``non-aggressive'' velocity
profile into the actual desired (quasi-static) curve.

\subsubsection{Continuation Update rule}
We start with a non-aggressive
desired curve, $\xi^1_d = (\xrm^1_d(\cdot), \urm^1_d(\cdot))$, and choose as initial
trajectory, $\xi^1_0 $, the projection of the desired curve, $\xi^1_0 =
\PP(\xi^1_d)$. That is, we implement equation \eqref{eq:proj_oper_def} with
$(\alpha(\cdot), \mu(\cdot)) = (\xrm^1_d(\cdot),
\urm^1_d(\cdot))$.
Then, we update the temporary desired curve, $\xi^i_d$, with the new curve in
the family, $\xi^{i+1}_d$, (characterized by a more aggressive velocity profile
on the same track) and use as initial trajectory for the new problem the optimal
trajectory at the previous step. The procedure ends when an optimal trajectory
is computed for the optimal control problem where the temporary desired curve
equals the actual one. Next, we give a pseudo code description of the
exploration strategy. We denote PO\_Newt$(\xi_i, \xi_{d})$ the local minimum
trajectory obtained by implementing the projection operator Newton method
presented in Appendix~\ref{APP:PO_appr} for a given desired curve $\xi_{d}$ and
initial trajectory $\xi_{i}$.
\vspace{-2ex}
\begin{center}
  \begin{minipage}{0.7\linewidth}
\begin{algorithm}[H]
\caption{Exploration strategy}
\label{alg:exploration_strategy}
\begin{algorithmic}
\REQUIRE desired path and velocity $x_{d}(\cdot)$, $y_{d}(\cdot)$ and $v_{d}(\cdot)$
\STATE compute: desired curve $\xi_{d} = (\xrm_{qs}(\cdot), \urm_{qs}(\cdot))$;\\
design: $\xi_{d}^i$, $i\in\{1,\ldots,n\}$, 
s.t. $\PP(\xi_{d}^1) \simeq \xi_{d}^1$ and $\xi_{d}^n = \xi_{d}$;\\
compute: initial trajectory $\xi_{0}^1 = \PP(\xi_{d}^1)$.

\FOR{$i=1, \ldots, n$}
\STATE compute: $\xi_{opt}^i$ = PO\_Newt($\xi_0^{i}$, $\xi_{d}^i$);
\STATE set: $\xi_0^{i+1}=\xi_{opt}^i$;
\ENDFOR
\ENSURE $\xi_{opt} = \xi_{opt}^n$.
\end{algorithmic}
\end{algorithm}
\end{minipage}
\end{center}


%
%
%
%
%
%
%

\subsection{Aggressive maneuver on a chicane and model validation}
As first computation scenario we perform an aggressive maneuver by using a
Computer Aided Engineering (CAE) tool for virtual prototyping to generate the
desired curve. CAE tools for virtual prototyping allow car designers to create a
full vehicle model and perform functional tests, without realizing a physical
prototype, with a very high level of reliability.
%
%
As CAE tool, we use Adams/Car developed by MSC.Software. Adams is one of the
most used multibody dynamics tools in the automotive industry.

%

The objective of this computation scenario is twofold: (i) we show the
effectiveness of the exploration strategy in finding an LT-CAR trajectory close
to the desired curve, and (ii) we validate the LT-CAR model by showing that the
desired curve, which is a trajectory of the full Adams model, is in fact
``almost'' a trajectory of the LT-CAR model.

The desired curve is obtained as follows. We set as desired path the chicane
depicted in Figure~\ref{fig:validation_model_adams}.  To obtain the desired
velocity profile, we set the initial velocity to $150$ km/h ($41.67$ m/s), and
invoke an Adams routine that generates a velocity profile to drive the vehicle
on the given path at maximum speed under a maximum acceleration
(\texttt{a$_{\texttt{max}}$}). The remaining desired state curves are obtained
by means of an Adams closed loop controller that drives the (Adams) vehicle on
the given path with the given velocity profile. The desired inputs are set to
zero since they do not have an immediate correspondence with the inputs of the
Adams vehicle. They are weighted lightly, thus giving the optimization the
necessary freedom to track the states.  With this desired trajectory in hands,
to ``run'' the exploration strategy, we need to define the initial trajectory
and the continuation update rule for the desired trajectory morphing.

The exploration strategy for this maneuver is as follows.
Initially, we limit
the maximum acceleration parameter to $50\%$ of the desired one
(\texttt{a$_{\texttt{max}}$}$_0$ $=$ $50\%$ \texttt{a$_{\texttt{max}}$}).
This
gives a trajectory that can be easily projected to the LT-CAR model to get a
suitable initial trajectory.
Then, we increase the vehicle capabilities of a $10\%$
acceleration step-size until the desired maximum acceleration is reached. For each
intermediate step, we set the Adams trajectory as temporary desired trajectory
and the optimal trajectory at the previous step as initial trajectory. A pseudo
code of the strategy is given in the following table.

\begin{center}
  \begin{minipage}{0.7\linewidth}
\begin{algorithm}[H]
\caption{Exploration strategy for the chicane maneuver}
\label{alg:exploration_strategy_chicane}
\begin{algorithmic}
\STATE Run: Adams/Car with \texttt{path} = ``chicane''\\
$~~~~~$ compute: velocity profile with \texttt{a$_{\texttt{max}}$}$_0$ = $50\%$ \texttt{a$_{\texttt{max}}$}\\
$~~~~~$ run: closed-loop driver to get $\xi_{d}^{50\%}$\\
Compute: initial trajectory $\xi_0^{50\%} = \PP(\xi_{d}^{50\%})$\\
\FOR{$i = 50, \ldots, 100$}
\STATE Run: Adams/Car with \texttt{path} = ``chicane'' \\
$~~~~$ compute: velocity profile with \texttt{a$_{\texttt{max}}$}$_i$ =
$i\%$ \texttt{a$_{\texttt{max}}$}\\
$~~~~$ run: closed-loop driver to get $\xi_{d}^{i\%}$\\
\STATE Compute: $\xi_{opt}^{i\%}$ = PO\_Newt($\xi_0^{i\%}$, $\xi_{d}^{i\%}$);
\STATE Set: $\xi_0^{(i+10)\%}=\xi_{opt}^{i\%}$;\\
\ENDFOR
\ENSURE $\xi_{opt} = \xi_{opt}^{100\%}$.
\end{algorithmic}
\end{algorithm}
\end{minipage}
\end{center}


%

\begin{figure*}[!ht]
  \begin{center}
    \subfloat[\small The chicane maneuver.] 
    {\includegraphics[width=56mm, height=35mm]{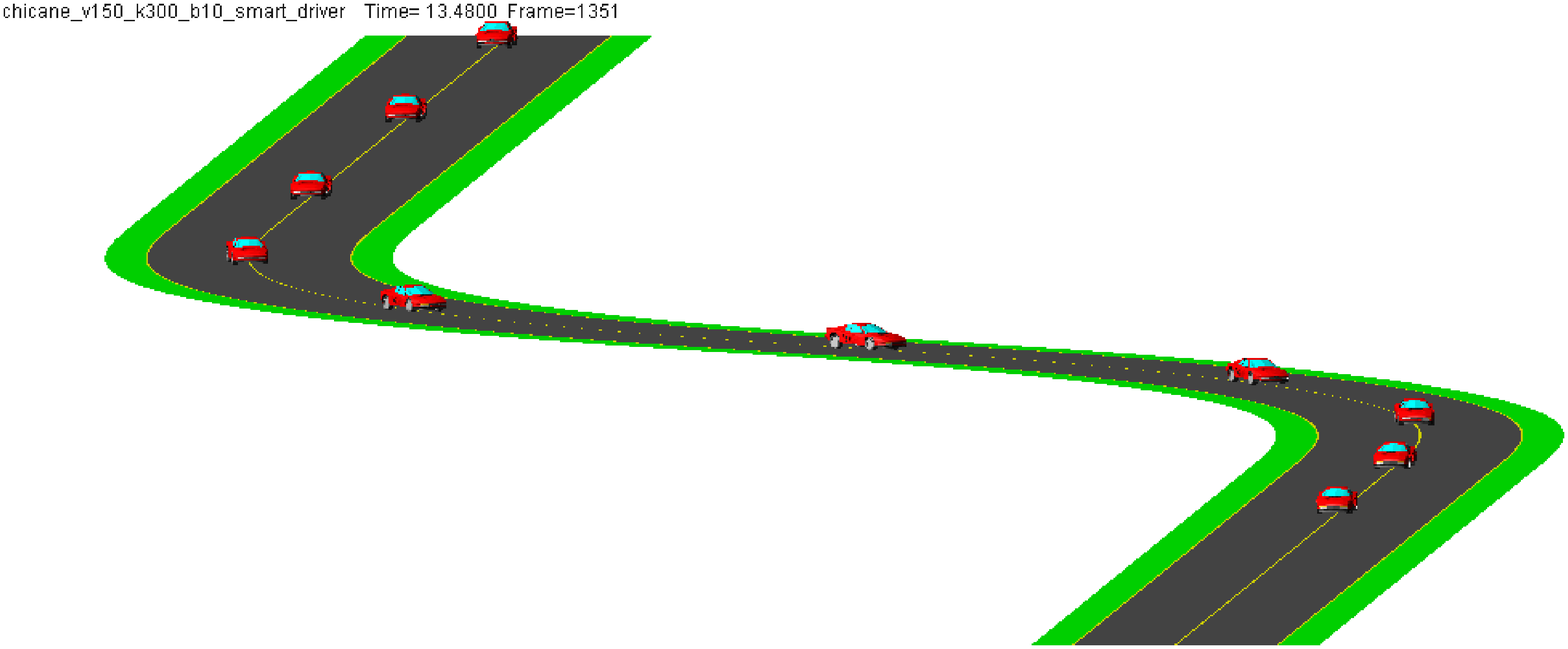} \label{fig:validation_model_adams}}
    \subfloat[\small Lateral acceleration $a_{lat}$.]
    {\includegraphics[scale=.375]{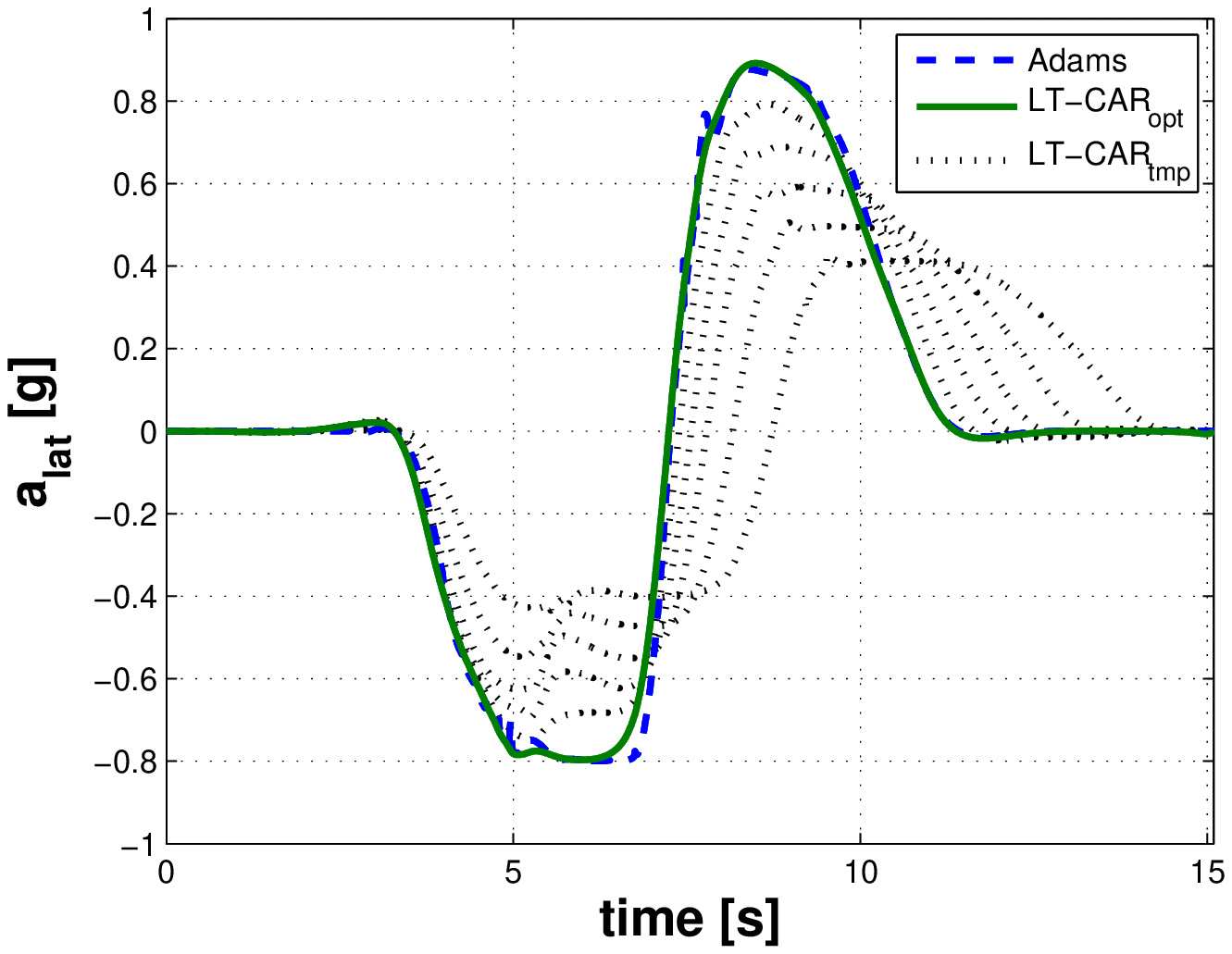} \label{fig:validation_model_alat}}
    \subfloat[\small Longitudinal velocity $v_x$.]
    {\includegraphics[scale=.375]{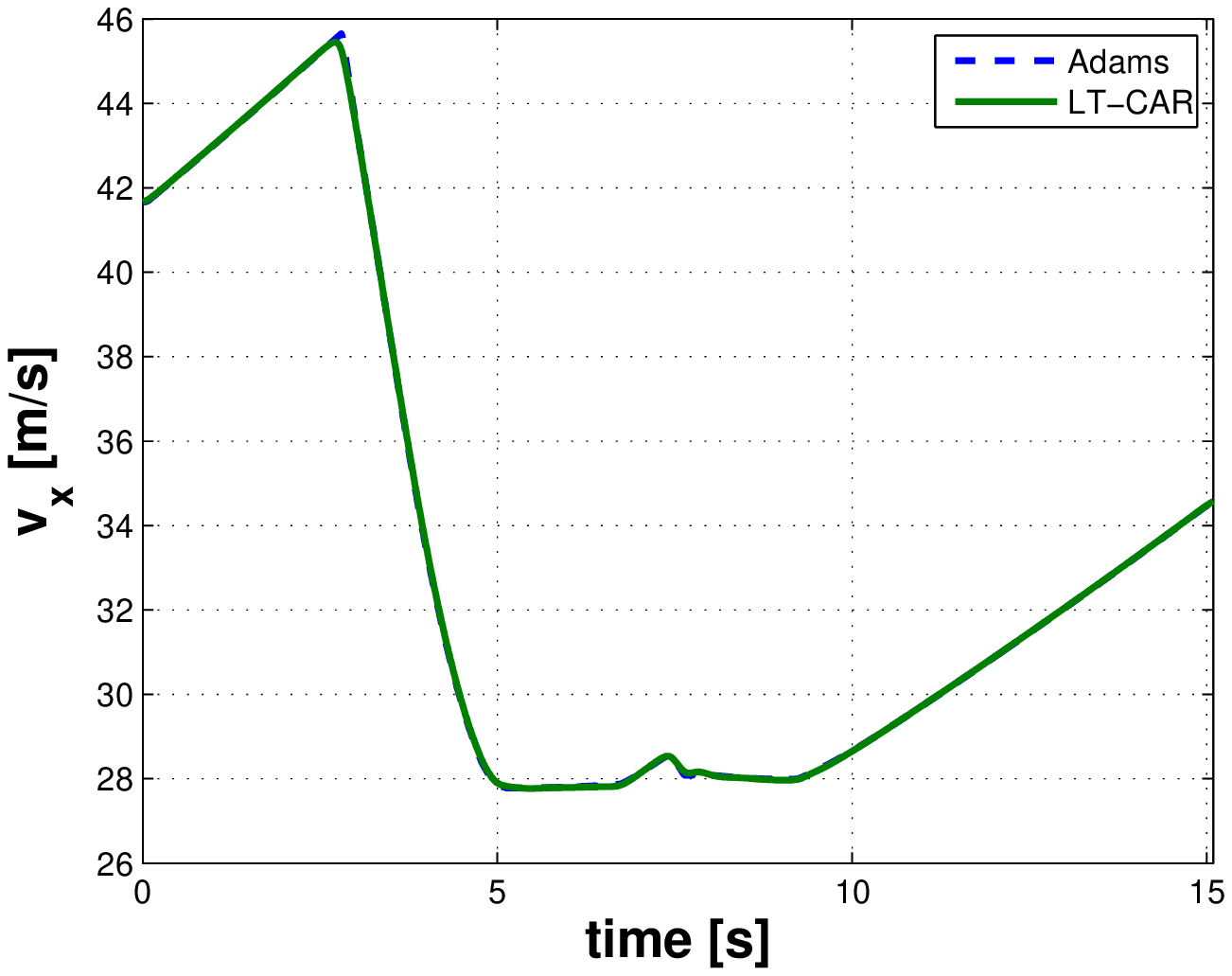} \label{fig:validation_model_vx}}

    \subfloat[\small Lateral velocity $v_y$.]
    {\includegraphics[scale=.375]{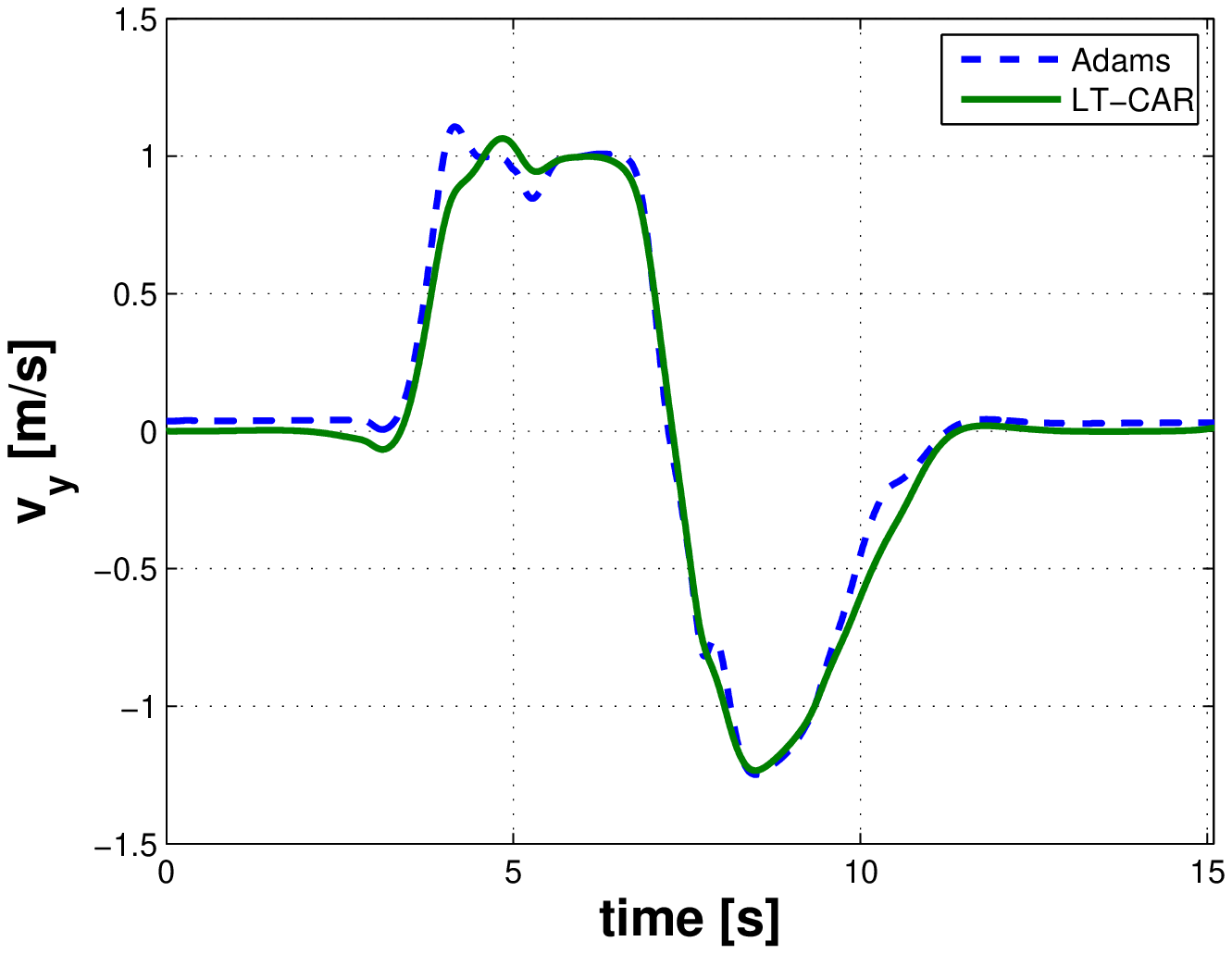} \label{fig:validation_model_vy}}
    \subfloat[\small Control inputs $\delta$(rad) and $\kappa$(Nodim).]
    {\includegraphics[scale=.375]{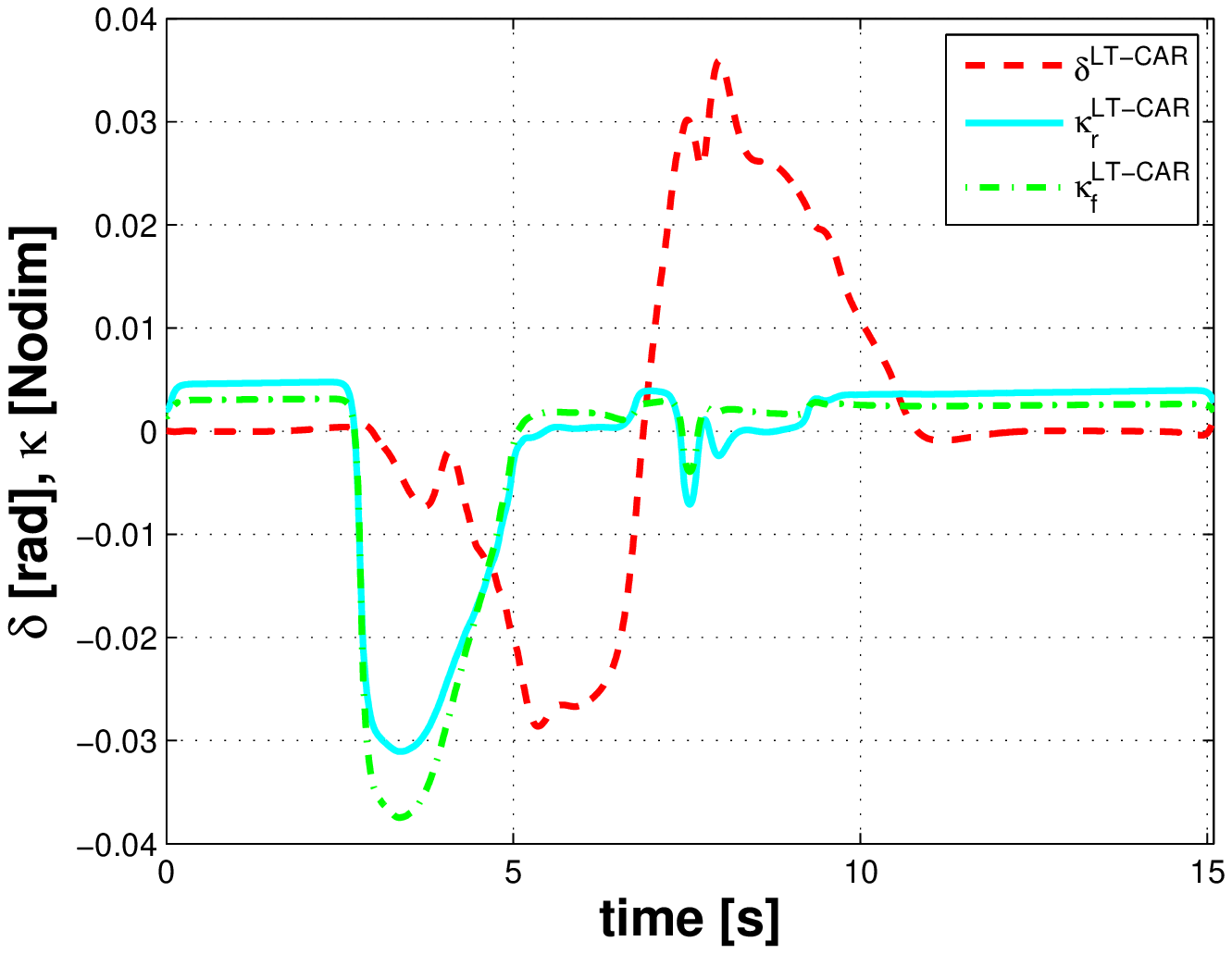} \label{fig:validation_model_inputs}}
%
    \subfloat[\small Wheel load $f_z$.]
    {\includegraphics[scale=.375]{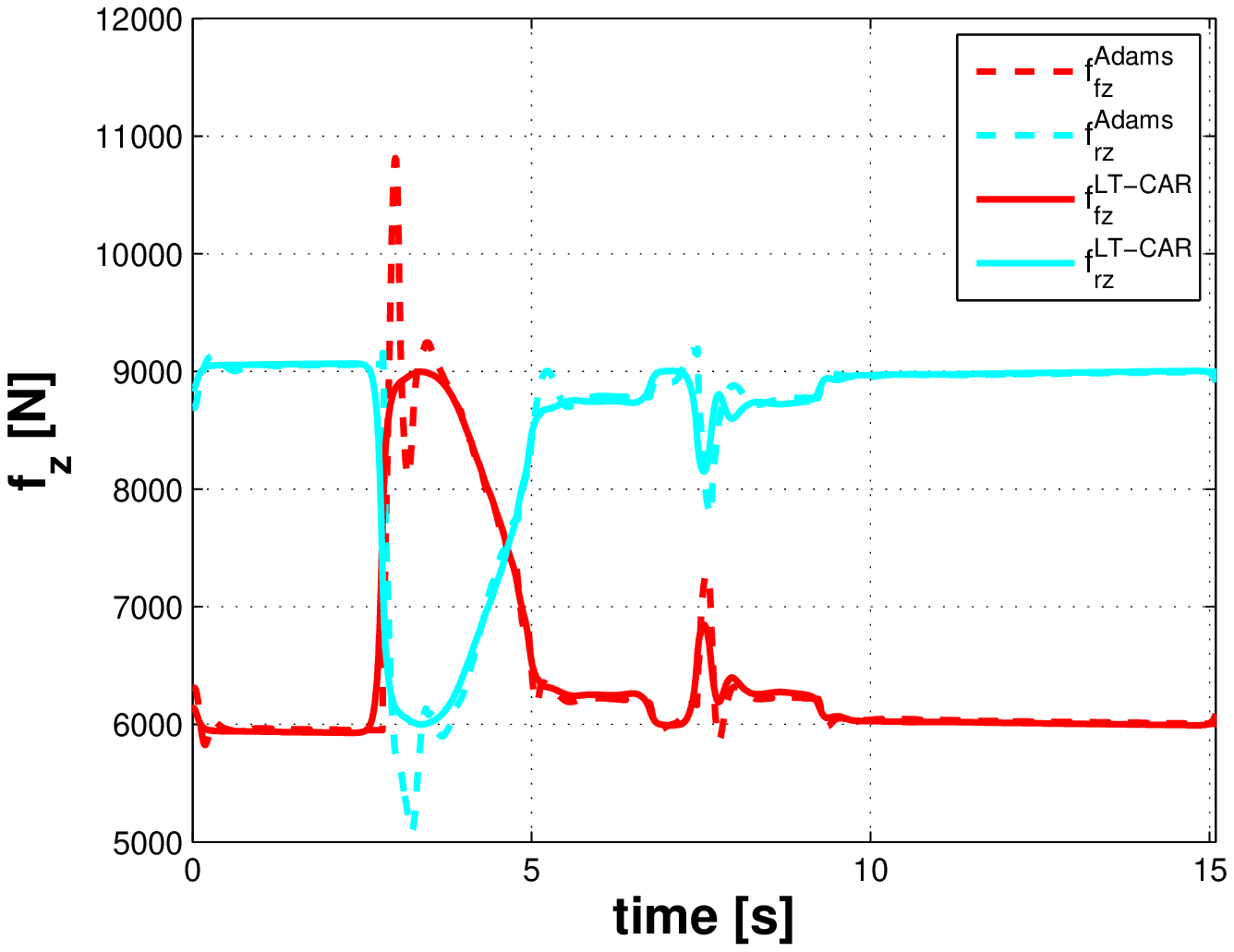} \label{fig:validation_model_fz}}
    \caption{\small Aggressive chicane maneuver.  
      The dash and solid lines are the Adams and the optimal LT-CAR
      trajectories, respectively (except for the input plots that are given only
      for the LT-CAR). Temporary optimal trajectories are in light dot lines.
      }
    \label{fig:validation_model}
  \end{center}
\vspace{-4ex}
\end{figure*}

%

In Figure~\ref{fig:validation_model} we show the main plots of the first
computation scenario. 
From the numerical computations we observed a fairly good position tracking. The
position error was less than $0.1$ m. In Figure~\ref{fig:validation_model_alat}
we show the lateral acceleration profile followed by the LT-CAR model versus the
Adams vehicle one. The light dot lines show the temporary optimal lateral
accelerations obtained during the continuation updates. In
Figure~\ref{fig:validation_model_vx} and Figure~\ref{fig:validation_model_vy} we
report respectively the longitudinal and lateral speed profiles. The maximum
error is less than $0.36$ m/s for the longitudinal speed and $0.07$ m/s for the
lateral one.
%
Comparing Figure~\ref{fig:validation_model_fz} with
Figure~\ref{fig:validation_model_vx}, we may notice the relationship between the
load transfer and the longitudinal acceleration (velocity slope).
The vehicle enters the first turn decreasing the speed (constant
negative slope) and the front load suddenly increases due to the load transfer
induced by the strong braking. After the first turn the velocity is slightly
increased (constant positive slope) as well as the load on the rear. Entering
the second turn, the vehicle reduces its speed again and then accelerates out
again.

It is worth noting in Figure~\ref{fig:validation_model_fz} how the LT-CAR
load transfer follows accurately the Adams vehicle load transfer except for a
high frequency oscillation (probably due to the Adams suspensions transient). We
stress the fact that there is an accurate prediction of the load transfer
although the LT-CAR has not a suspension model.

\subsection{Constant speed maneuver on a real testing track}
In this test the desired maneuver consists of following a real testing track at
constant speed%
\footnote{See http://www.nardotechnicalcenter.com/ for details on the track}.
In particular, we choose a desired speed that in the last turn gives a lateral acceleration
exceeding the tire limits.  For this reason we compute the desired curve as the
quasi-static trajectory of the Linear Tires LT-CAR model, (LT)$^2$-CAR, on the
desired path profile depicted in Figure~\ref{fig:ntc_path} with velocity
$v=30$ m/s.

The exploration strategy for this maneuver is as follows.
To morph to the desired curve, we start with a speed of $25$ m/s and increase the
velocity profile of $1$ m/s at each step.
%
%
For each speed value, we compute the desired curve as the quasi-static
trajectory of the (LT)$^2$-CAR model on the track. As mentioned before, for the
(LT)$^2$-CAR model we can find the quasi-static trajectory on a wider range of
lateral accelerations. The exploration strategy thus follows the usual steps. In
the following pseudo code we denote $\xi_{\text{LT$^2$-CAR}}^{v}$ the
quasi-static trajectory of LT$^2$-CAR obtained on the given path at constant
velocity $v$.


\begin{center}
  \begin{minipage}{0.7\linewidth}
\begin{algorithm}[H]
\caption{Exploration strategy for the constant speed maneuver}
\label{alg:exploration_strategy_constspeed}
\begin{algorithmic}
\REQUIRE desired path $x_{d}(\cdot)$, $y_{d}(\cdot)$ and $v_{d}(\cdot)\equiv30$m/s\\[1.1ex]
\STATE compute: desired curve $\xi_{d} = \xi_{\text{LT$^2$-CAR}}^{30\text{m/s}}$;\\[1.1ex]
  compute: initial trajectory $\xi_{0}^{25} = \PP(\xi_{\text{LT$^2$-CAR}}^{25\text{m/s}})$.\\

\FOR{$v=25, \ldots, 30$ m/s}
\STATE set: $\xi_{d}^{v}=\xi_{\text{LT$^2$-CAR}}^{v}$;
\STATE compute: $\xi_{opt}^v$ = PO\_Newt($\xi_0^{v}$, $\xi_{d}^v$);
\STATE set: $\xi_0^{v+1}=\xi_{opt}^v$;
\ENDFOR
\ENSURE $\xi_{opt} = \xi_{opt}^{30}$.
\end{algorithmic}
\end{algorithm}
\end{minipage}
\end{center}

\begin{figure*}[!ht]
  \begin{center}
    \subfloat[\small Path $x$-$y$.]
    {\includegraphics[scale=.256]{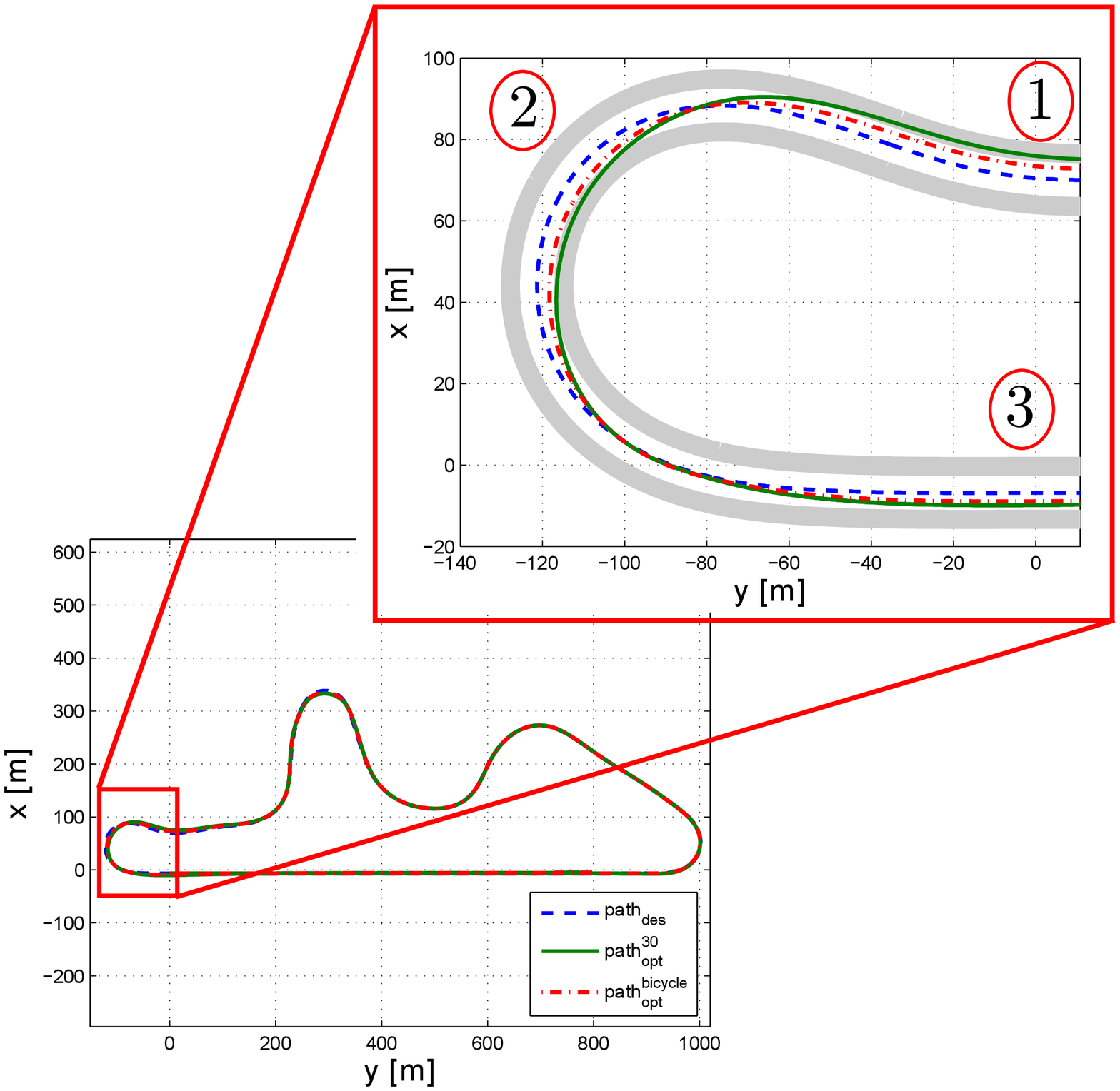} \label{fig:ntc_path}}
    \subfloat[\small Lateral acceleration $a_{lat}$.]
    {\includegraphics[scale=.375]{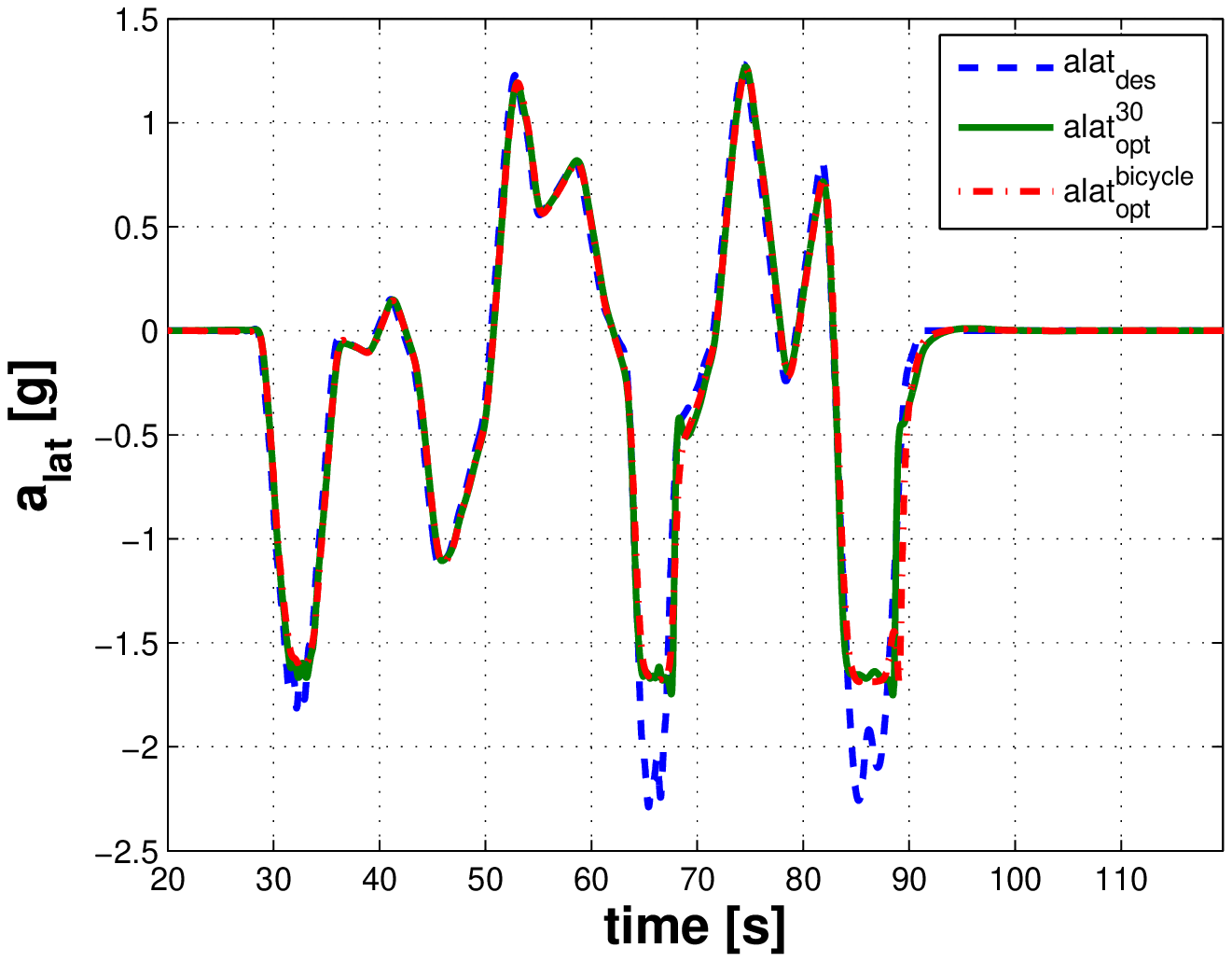} \label{fig:ntc_alat}}
    \subfloat[\small Velocity $v$.]
    {\includegraphics[scale=.375]{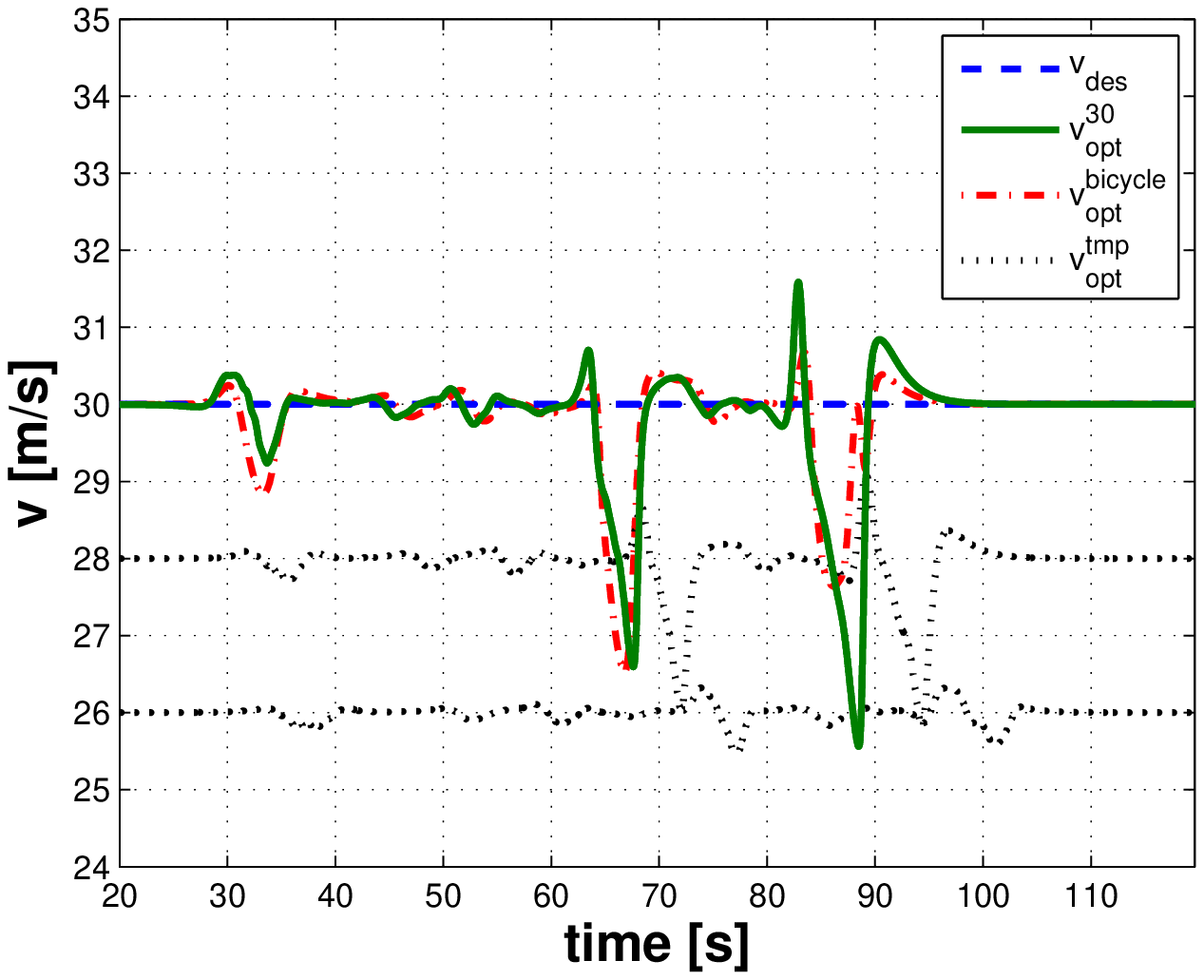} \label{fig:ntc_v}}

    \subfloat[\small Sideslip $\beta$.]
    {\includegraphics[scale=.375]{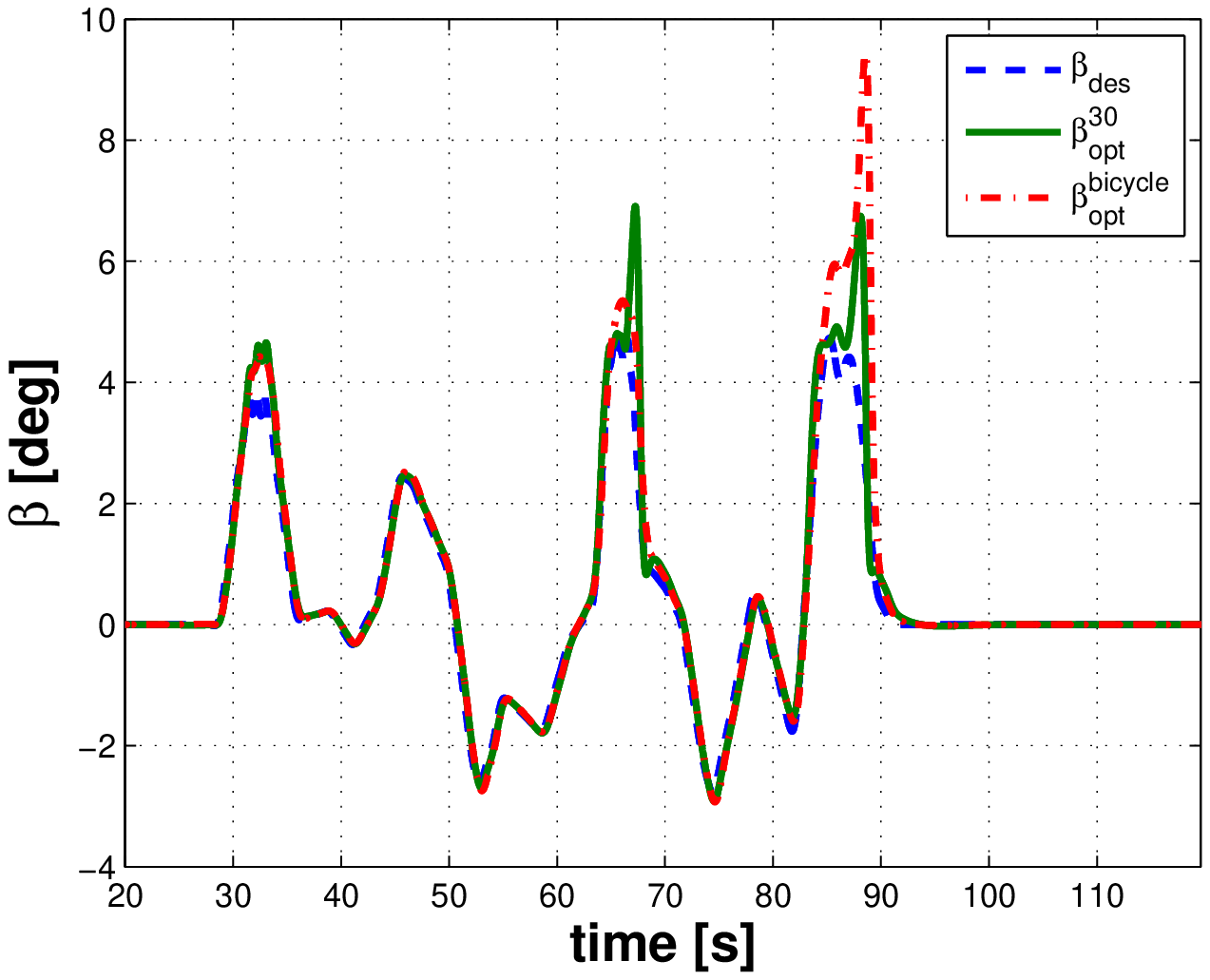} \label{fig:ntc_beta}}
    \subfloat[\small Steer angle $\delta$.]
    {\includegraphics[scale=.375]{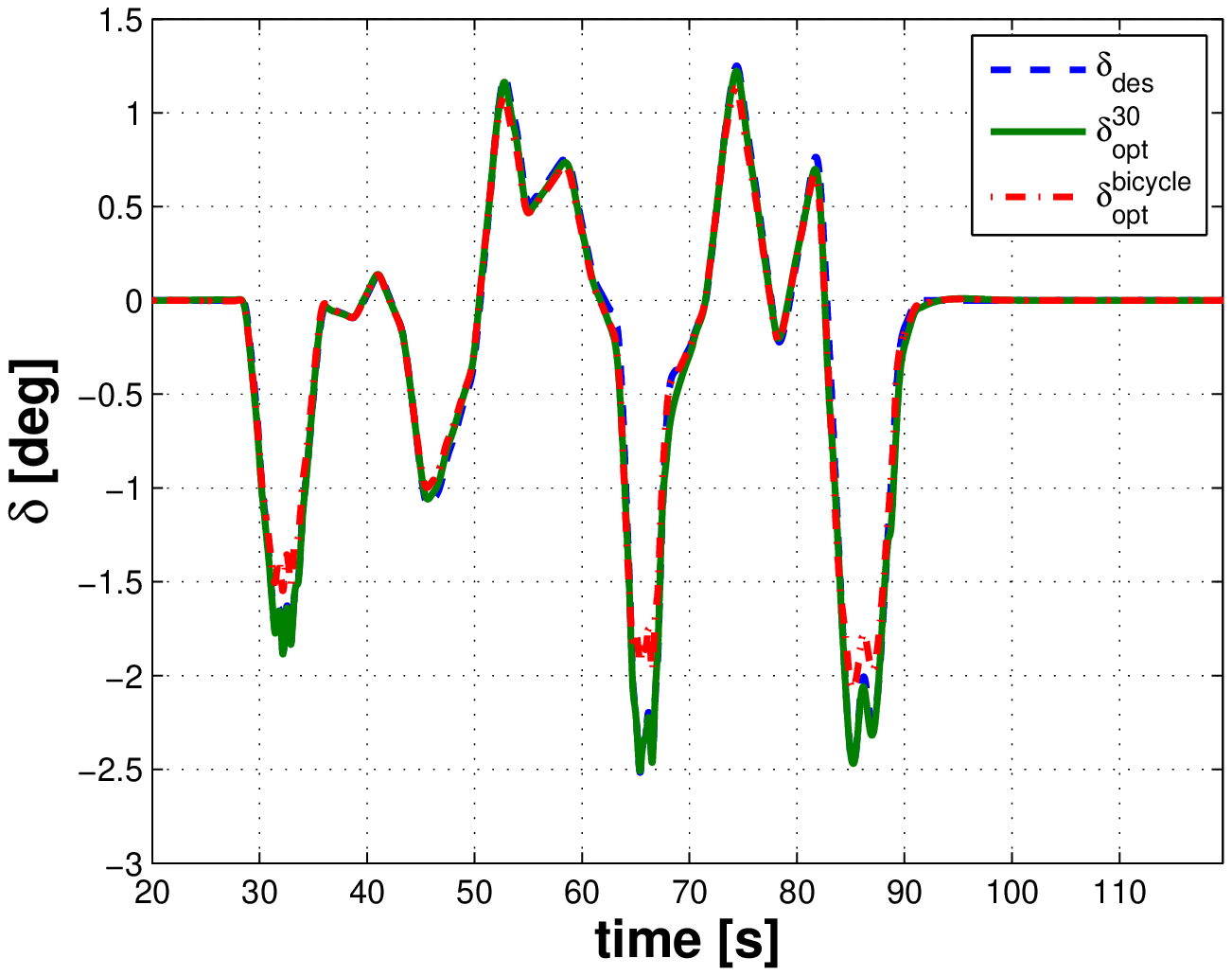} \label{fig:ntc_delta}}
    \subfloat[\small Longitudinal slip $\kappa$.]
    {\includegraphics[scale=.375]{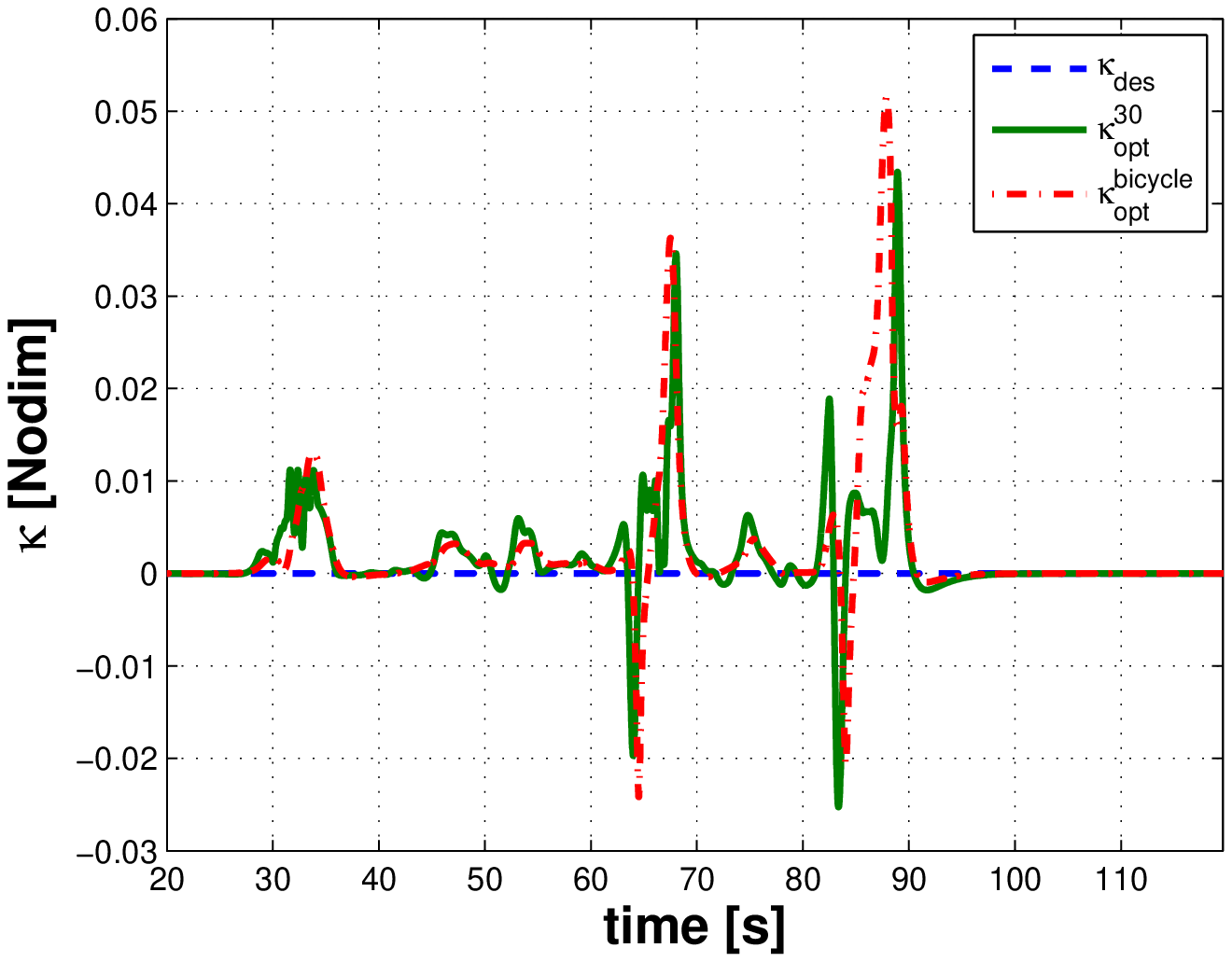} \label{fig:ntc_kappa}}

%

    \caption{\small Constant speed ($30m/s$) maneuver on a real
      testing track. The dash, solid and
      dash-dot lines are the desired curve, the optimal LT-CAR
      and the optimal bicycle model trajectories, respectively. Temporary optimal
      trajectories (for $v=26,28$ m/s)
      are in light dot lines.
    }
    \label{fig:ntc}
  \end{center}
\vspace{-4ex}
\end{figure*}

In Figures~\ref{fig:ntc} the optimal trajectory of the LT-CAR model (solid
green) is compared with the desired curve (dash blue) and with the optimal
trajectory of the bicycle model (dash-dot red).
We choose a desired speed ($30$ m/s) that in the last turn gives a lateral
acceleration exceeding the tire limits. 
%
The comparison with the bicycle model confirms the importance of including the
load transfer.
%
%
Indeed, the behavior of the two models is significantly different in the braking
and acceleration regions, Figure~\ref{fig:ntc_kappa}.
In particular, in the last turn, the vehicle decelerates in order to
satisfy the maximum acceleration limit. Notice that both the two models achieve
this limit, Figure~\ref{fig:ntc_alat}.  However, the LT-CAR has to anticipate
the breaking point and increase the value of the longitudinal slip with respect
to the bicycle model, Figure~\ref{fig:ntc_kappa}. This behavior is due to the
load transfer: in the LT-CAR the weight shifts to the front axle, thus reducing
the traction capability. The behavior is reversed in the acceleration region.
Notice that, in order to achieve the maximum lateral acceleration, the LT-CAR
requires a lower sideslip angle, but a higher steer angle,
Figures~\ref{fig:ntc_beta}~and~\ref{fig:ntc_delta}.
Next, we comment on an interesting phenomenon happening in the last turn.  In
the first straight portion (highlighted with ``$1$'' in
Figure~\ref{fig:ntc_path}), the vehicle moves to the right of
the track to reduce the path curvature when entering the turn.
%
%
In order to generate the required lateral forces 
in the turn (portion ``$2$'') 
the tires have a high sideslip angle, Figure~\ref{fig:ntc_beta}.
When the car starts to exit the turn (portion ``$3$''), 
%
the lateral forces on the tires decrease, so that the longitudinal slip can increase,
Figure~\ref{fig:ntc_kappa}, 
to regain the desired constant speed.

\section{Conclusions}
In this paper we studied the problem of modeling and exploring the dynamics of a
single-track rigid car model that takes into account tire models and load
transfer.
Starting from the bicycle model, we introduced the load transfer phenomenon by
explicitly imposing the holonomic constraints for the contact with the ground.
The resulting model shows many of the interesting dynamic effects of a real car.
For this rigid car model we characterized the equilibrium manifold on the entire
range of operation of the tires and analyzed how it changes with respect to
suitable parameters.
Finally, we provided a strategy, based on nonlinear optimal control techniques
and continuation methods, to explore the trajectories of the car model.
%
Specifically, the proposed exploration strategy provides an effective approach
for exploring the limits of the vehicle. The strategy was used, e.g., to find
trajectories in which the lateral acceleration limit of the vehicle is reached,
without applying constrained optimal control methods.
We provided numerical computations showing the effectiveness of the exploration
strategy on an aggressive maneuver and a real testing track.


%

\appendix


\subsection{Car model parameters}
\label{APP:params}
The tire equations introduced in Section~\ref{sec:tires_gen_forces} are based on
the formulation in \cite{HBP:02}. The pure longitudinal and lateral slips are given by
\[
\begin{split}
f_{x0}(\kappa)&=d_x\sin{\{c_x\arctan{[ b_x\kappa-e_x(b_x\kappa-\arctan{ b_x\kappa
    }) ]}\}},\\
f_{y0}(\beta)&=d_y\sin{\{c_y\arctan{[b_y\beta-e_y(b_y\beta-\arctan{b_y\beta})]}\}}
\end{split}
\]
and the loss functions for combined slips by
\begin{equation*}
  \begin{split}
    g_{x\beta}(\kappa,\beta) &=
    \cos{\biggl[c_{x\beta}\arctan{\Bigl(\beta\frac{r_{bx1}}{1+r^{2}_{bx2}\kappa^2}\Bigr)}\biggr]},\\
    g_{yk}(\kappa,\beta) &=
    \cos{\biggl[c_{yk}\arctan{\Bigl(\kappa\frac{r_{by1}}{1+r^{2}_{by2}\beta^2}\Bigr)}\biggr]}.
  \end{split}
\end{equation*}
The parameters are based on the ones given in \cite{GG:00}. The tire parameters
are determined by nonlinear curve-fitting routines. \\[1.2ex]
%
\begin{minipage}{.50\textwidth}
\center\small{Sports car}
\[
\footnotesize{
  \begin{array}{llllll}
     ~         &   ~~\mbox{rear}      &   ~~\mbox{front}     &      ~        &   ~~\mbox{rear}      &   ~~\mbox{front}    \\
    d_x      &   \phantom{-}1.688   &   \phantom{-}1.688   &     d_y     &   \phantom{-}1.688     &   \phantom{-}1.688     \\[1ex]
    c_x        &   \phantom{-}1.65    &   \phantom{-}1.65    &     c_y       &   \phantom{-}1.79     &   \phantom{-}1.79   \\
    b_x        &   \phantom{-}8.22  &   \phantom{-}8.22  &     b_y       &   \phantom{-}8.822  &   \phantom{-}12.848  \\
    e_x        &   \phantom{-}-10.0  &   \phantom{-}-10.0  &     e_y       &            - 2.02   &            -1.206   \\[1ex]
    c_{x\beta} &   \phantom{-}1.1231  &  \phantom{-}1.1231   &   c_{y\kappa} &   \phantom{-}1.0533  & \phantom{-}1.0533  \\
    r_{bx1}    &   \phantom{-}13.476  &  \phantom{-}13.476   &    r_{by1}    &   \phantom{-}7.7856  &  \phantom{-}7.7856\\
    r_{bx2}    &   \phantom{-}11.354  &  \phantom{-}11.354   &    r_{by2}    &   \phantom{-}8.1697  &  \phantom{-}8.1697\\
  \end{array}
  }
\]
\footnotesize{
\[
  a = 1.421 [m]
  \quad
  b = 1.029 [m]
  \quad
  h = 0.42 [m]
\]
\[
    m = 1480 [kg] \quad
 I_b =
  \left[
  \begin{array}{ccc}
    590 &   0      & -50 \\
      0      & 1730 &   0      \\
    -50 &   0      & 1950
  \end{array}
  \right]
\]
}
\end{minipage}%
\begin{minipage}{.50\textwidth}
\center\small{Adams model} 
\[
\footnotesize{
  \begin{array}{llllll}
     ~         &   ~~\mbox{rear}      &   ~~\mbox{front}     &      ~        &   ~~\mbox{rear}      &   ~~\mbox{front}    \\
    d_x      &   \phantom{-}1.48   &   \phantom{-}1.48   &     d_y     &   \phantom{-}1.22     &   \phantom{-}1.22     \\[1ex]
    c_x        &   \phantom{-}1.37    &   \phantom{-}1.37    &     c_y       &   \phantom{-}1.25     &   \phantom{-}1.25   \\
    b_x        &   \phantom{-}18.22  &   \phantom{-}18.22  &     b_y       &   \phantom{-}17.8  &   \phantom{-}17.8  \\
    e_x        &            -  0.46  &            -  0.46  &     e_y       &   \phantom{-}0.02   &   \phantom{-}0.02   \\[1ex]
    c_{x\beta} &   \phantom{-}1.1231  &  \phantom{-}1.1231   &   c_{y\kappa} &   \phantom{-}1.0533  & \phantom{-}1.0533  \\
    r_{bx1}    &   \phantom{-}13.476  &  \phantom{-}13.476   &    r_{by1}    &   \phantom{-}7.7856  &  \phantom{-}7.7856\\
    r_{bx2}    &   \phantom{-}11.354  &  \phantom{-}11.354   &    r_{by2}    &   \phantom{-}8.1697  &  \phantom{-}8.1697\\
  \end{array}
}
\]
\footnotesize{
\[
  a = 1.48 [m]
  \quad
  b = 1.08 [m]
  \quad
  h = 0.43 [m]
\]
\[
    m = 1528.68 [kg] \quad
 I_b =
  \left[
  \begin{array}{ccc}
    583.39 &   0      & -1.91 \\
      0      & 6129.12 &   0      \\
    -1.91 &   0      & 6022.36
  \end{array}
  \right]
\]
}
\end{minipage}

\subsection{Proof of Proposition \ref{prop:car_model}}
\label{APP:proof_model}
 This appendix gives the main steps for the derivation of the constrained Lagrangian dynamics.

  To prove statement (i), we use Lagrange's equations \eqref{eq:lagrange}
  including all the coordinates (even the constrained ones) and plug the
  constraints directly into the equations of motion (rather than attempting to
  eliminate the constraints by an appropriate choice of coordinates).  The
  constraints are taken into account by adding the constraint forces into the
  equation of motion as additional forces which affect the motion of the
  system.
  Hence the constrained equations of motion can be written as
 \[
  \begin{split}
    M(q)\ddot{q}+C(q,\dot{q})+G(q) &= J^T_f(q)f-A^T(q)\lambda  \\
    A(q)\ddot{q}+\dot{A}(q)\dot{q} &= 0,
  \end{split}
  \]
  where $M$, $C$, $G$ and $A$ are the one introduced in \eqref{eq:xypsiztheta}
  and \eqref{eq:constr_matrix}.
  The constraints lead to $q_c(t)=\dot{q}_c(t)=\ddot{q}_c(t)=0$, $\forall t \in
  \real$, so that we have
 \begin{align*}
    & [M(q)\ddot{q}+C(q,\dot{q})+G(q)]|_{q_c=0}=[J^T_f(q)f-A^T(q)\lambda]|_{q_c=0}
  \end{align*}
  where
  \begin{align*}
    &M(q)|_{q_c=0} = \left[
        \begin{array}{c|c}
          M_1(q_r) & M_2(q_r)
        \end{array}
      \right]
    = \left[
        \begin{array}{ccc|cc}
          m & 0 & -mbs_{\psi} & 0 & -mhc_{\psi}\\
          0 & m & mbc_{\psi} & 0 & -mhs_{\psi}\\
          -mbs_{\psi} & mbc_{\psi} & mb^2+I_{zz} & 0 & 0\\
          0 & 0 & 0 & m & mb \\
          -mhc_{\psi} & -mhs_{\psi} & 0 & mb & I_{yy}+m(b^2+h^2)\\
        \end{array}
      \right] 
  \end{align*}

  \[
  A(q)|_{q_c=0} = \left[
      \begin{array}{ccccc}
        0 & 0 & 0 & 1 & -(a+b) \\
        0 & 0 & 0 & 1 &  0\\
      \end{array}
    \right] \, ,
  \]
  %
  and $C(q,\dot{q})|_{q_c=0}$, $G(q)|_{q_c=0}$,
  $J^T_f(q)f|_{q_c=0}$ are given by 
  \eqref{th:lagr_const_G}, and \eqref{th:lagr_const_U} respectively.
  We rewrite the equations of motion with respect to the \emph{extended variables}
  $[q_r, \lambda]^T$ as
  \begin{equation} [M_1(q_r) | A^T]
     \left[
        \begin{array}{cc}
          \ddot{q}_r \\
          \lambda \\
        \end{array}
      \right] \,
    +\CC(q_r,\dot{q}_r)+\GG(q_r)=
    \left[
      \begin{array}{c}
        \UU_1 \\ \hline
        0
      \end{array}
    \right].
    \label{th:laststep}
  \end{equation}
  Defining $\tilde{\MM} = [M_1(q_r) | A^T]$, the special
  structure \eqref{th:lagr_const_matrix} follows.

  To prove statement (ii), we compute the reduced Lagrangian
  $\mathcal{L}_{r}(q_{r}) = T(q_{r},\dot{q}_{r})-V(q_{r})$ and derive the
  Euler-Lagrange equations. Explicit calculations, shown in
  Appendix~\ref{APP:reduced_model}, lead to equation \eqref{eq:xypsi}. The
  expression of the constraint forces follows from the arguments to
  prove statement (i).

  Finally, to prove (iii), if the forces $f$ depend linearly on the reaction
  forces we have $f=F\lambda$, for a suitable $F$, then we can rewrite the
  generalized forces as
  \begin{align*}
    &\biggl[
      \begin{array}{c}
        \UU_1 \\ \hline
        0
      \end{array}
    \biggr] = J^T_f(q_r)|_{q_c=0} \left[
        \begin{array}{cc}
          \mu_{fx} & 0 \\
          \mu_{fy} & 0 \\
          0 & \mu_{rx} \\
          0 & \mu_{ry} \\
        \end{array}
      \right]
    \lambda
     = 
        \begin{bmatrix}
          c_{\psi}\mu_{fx}-s_{\psi}\mu_{fy} & c_{\psi}\mu_{rx}-s_{\psi}\mu_{ry} \\
          s_{\psi}\mu_{fx}+c_{\psi}\mu_{fy} & s_{\psi}\mu_{rx}+c_{\psi}\mu_{ry} \\
          (a+b)\mu_{fy} & 0 \\ \hline
          0 & 0 \\
          0 & 0 \\
        \end{bmatrix}
      \lambda 
     := \left[
        \begin{array}{c}
          \MM_{12} \\ \hline
          0 \\
        \end{array}
      \right]\lambda,
  \end{align*}
  so that equation \eqref{th:laststep} becomes
  \begin{equation*}
      \tilde{\MM}(q_r)
      \left[
        \begin{array}{c}
          \ddot{q}_r \\
          \lambda \\
        \end{array}
      \right]
      -
      \left[
        \begin{array}{c|c}
          0 & \MM_{12}(q_r,\mu) \\
          \hline
          0 & 0
        \end{array}
      \right]
      \left[
        \begin{array}{c}
          \ddot{q}_r \\
          \lambda \\
        \end{array}
      \right]
      +\CC+\GG = 0
  \end{equation*}
from which equation \eqref{th:dynamic_eq} follows directly.


\subsection{Reduced order model without load transfer (bicycle model)}
\label{APP:reduced_model}
The vector $q_r = [x,y,\psi]^T$ provides a valid set of
generalized coordinates for dynamics calculations.  So, the equations of motion
for a Single-track rigid car with generalized coordinates $q_r=[x,y,\psi]^T$ are
given by
\begin{equation*}
  \MM_{11}(q_r)\ddot{q_r}+\CC_1(q_r,\dot{q}_r)+\GG_1(q_r)=\UU_1
\end{equation*}
where the mass matrix, the Coriolis and gravity vectors are
\[
\MM_{11}(q_r) = \left[
  \begin{array}{ccc}
    m & 0 & -mbs_{\psi} \\
    0 & m & mbc_{\psi} \\
    -mbs_{\psi} & mbc_{\psi} & (I_{zz}+mb^2) \\
  \end{array}
\right], \,
\CC_{1}(q_r,\dot{q}_r) = \left[
  \begin{array}{c}
    -mbc_{\psi}\dot{\psi}^2 \\
    -mbs_{\psi}\dot{\psi}^2 \\
    0 \\
  \end{array}
\right], \, 
\GG_1(q_r) = \left[
  \begin{array}{c}
    0 \\
    0 \\
    0 \\
  \end{array}
\right]
\]
and the vector of generalized forces is
%
\[
  \UU_1  = J_f^T(\psi)f = \left[
    \begin{array}{cccccc}
      c_\psi   & -s_\psi  & c_\psi   &  -s_\psi \\
      s_\psi   & c_\psi   & s_\psi   &   c_\psi \\
      0  &    (a+b) &       0  &        0  \\
    \end{array}
  \right] \left[
    \begin{array}{c}
      f_{fx}\\
      f_{fy}\\
      f_{rx}\\
      f_{ry}
    \end{array}
  \right].
\]


\subsection{Projection Operator Newton method}
\label{APP:PO_appr}
We recall the optimal control tools, namely the Projection Operator-based Newton
method, used to explore the trajectory manifold of the car vehicle, see
\cite{JH:02} and \cite{JH-DGM:98}. We are interested in solving optimal control
problems of the form
\[
 \begin{split}
   \min &\;\; h(\xi; \xi_d) \!:= \!\frac{1}{2}\!\!\int_0^T
   \!\!\!\!\|x(\tau)\!-\!x_{d}(\tau)\|_Q^2 +
   \|u(\tau)\!-\!u_{d}(\tau)\|_R^2d\tau 
   + \frac{1}{2}\|x(T)-x_{d}(T)\|_{P_1}^2\\
   \!\text{subj. \!to} &\;\; \dot x = f(x,u) \qquad x(0) = x_0,
 \end{split}
\]
with $\xi = (x(\cdot),u(\cdot))$ and $\xi_d = (x_d(\cdot), u_d(\cdot))$.
Denoting $\TT$ the manifold of bounded trajectories $(x(\cdot),u(\cdot))$ on
$[0,T]$, the optimization problem can be written as
\begin{equation}
  \min_{\xi\in\TT} h(\xi;\xi_d).
  \label{eq:constrOCP}
\end{equation}

%
The Projection Operator Newton method is based on a trajectory tracking approach, defining a projection operator that maps
a state-control curve (e.g., a desired curve) onto the trajectory
manifold. Specifically, the time varying-trajectory tracking control law
\begin{equation}
  \begin{split}
    \dot{x}(t) =& f(x(t), u(t)), \qquad x(0) = x_0,\\
    u(t) =& \mu(t) + K(t)(\alpha(t) - x(t))
  \end{split}
\label{eq:proj_oper_def}
\end{equation}
defines the projection operator
\[
\PP : \xi = (\alpha(\cdot), \mu(\cdot)) \mapsto \eta = (x(\cdot), u(\cdot)),
\]
mapping the curve $\xi$ to the trajectory $\eta$.

Using the projection operator to locally parametrize the trajectory manifold, we
may convert the constrained optimization problem \eqref{eq:constrOCP} into one
of minimizing the unconstrained functional $g(\xi;\xi_d) = h(P(\xi);\xi_d)$
using, for example, a Newton descent method as described below.
A geometric representation of the projection operator is shown in Figure~\ref{fig:proj_oper}.
\begin{figure}[ht!]
 \begin{center}
\vspace{-2ex}   
\includegraphics[scale=.25]{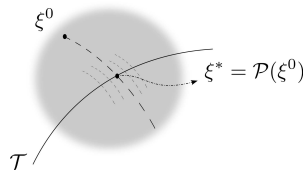}
\vspace{-2ex}
   \caption{\small Geometric representation of the trajectory manifold: every
     point of $\TT$ is a trajectory of the system. The projection of the curve $\xi^0 = (\alpha(\cdot),
     \mu(\cdot))$ on $\TT$ through $\PP$ is the trajectory $\xi^* = (\xrm(\cdot), \urm(\cdot))$.}
   \label{fig:proj_oper}
 \end{center}
\vspace{-4ex}
\end{figure}
%
Minimization of the trajectory functional is accomplished by iterating over the
algorithm shown in the table, where $\xi_i$ indicates the current trajectory iterate,
$\xi_0$ an initial trajectory, and $\zeta \mapsto Dg(\xi_i;\xi_d) \cdot \zeta$
and $\zeta \mapsto D^2 g(\xi_i;\xi_d)(\zeta,\zeta)$ are respectively the first
and second Fr\'echet differentials of the functional $g(\xi) = h(\PP(\xi);\xi_d)$ at
$\xi_i$.

%


 \noindent{\bf Projection operator Newton method (PO\_Newt)}\\
 Given initial trajectory $\xi_0 \in \TT$\\
 {\bf For} $i = 0, 1, 2 ...$
 \begin{itemize}
 \item[] design $K$ defining $\PP$ about $\xi_i$
 \item[] search for descent direction
   $$\zeta_i = \text{arg} \min_{\zeta\in T_{\xi_i} \TT} Dg(\xi_i;\xi_d) \cdot \zeta + \frac{1}{2} D^2 g(\xi_i;\xi_d)(\zeta,
   \zeta)$$
 \item[] step size $\gamma_i = \arg \min_{\gamma \in (0,1]} g(\xi + \gamma
   \zeta_i)$;
 \item[] project $\xi_{i+1}={\PP}(\xi_i + \gamma_i \zeta_i)$.
 \end{itemize} {\bf end}


The algorithm has the structure of a standard Newton method for the minimization
of an unconstrained function. The key points are the design of $K$ defining the
projection operator and the computation of the derivatives of $g$ to ``search
for descent direction''. It is worth noting that these steps involve the
solution of well known linear quadratic optimal control problems
\cite{JH:02}.




\begin{thebibliography}{10}
\providecommand{\url}[1]{#1}
\csname url@samestyle\endcsname
\providecommand{\newblock}{\relax}
\providecommand{\bibinfo}[2]{#2}
\providecommand{\BIBentrySTDinterwordspacing}{\spaceskip=0pt\relax}
\providecommand{\BIBentryALTinterwordstretchfactor}{4}
\providecommand{\BIBentryALTinterwordspacing}{\spaceskip=\fontdimen2\font plus
\BIBentryALTinterwordstretchfactor\fontdimen3\font minus
  \fontdimen4\font\relax}
\providecommand{\BIBforeignlanguage}[2]{{%
\expandafter\ifx\csname l@#1\endcsname\relax
\typeout{** WARNING: IEEEtran.bst: No hyphenation pattern has been}%
\typeout{** loaded for the language `#1'. Using the pattern for}%
\typeout{** the default language instead.}%
\else
\language=\csname l@#1\endcsname
\fi
#2}}
\providecommand{\BIBdecl}{\relax}
\BIBdecl

\bibitem{AR-GN-JH:10}
A.~Rucco, G.~Notarstefano, and J.~Hauser, ``Dynamics exploration of a
  single-track rigid car model with load transfer,'' in \emph{{IEEE} Conf.\ on
  Decision and Control}, Atlanta, GA, USA, December 2010, pp. 4934--4939.

\bibitem{TD:95}
T.~D. Day, ``An overview of the \textsc{HVE} vehicle model,'' in \emph{SAE,
  paper no. 950308}, 1995, pp. 55--68.

\bibitem{RF-AB:06}
R.~Frezza and A.~Beghi, ``A virtual motorcycle driver for closed-loop
  simulation,'' \emph{IEEE Control Systems Magazine}, vol.~5, pp. 62--77, 2006.

\bibitem{WFI:95}
W.~F. Milliken and D.~L. Milliken, \emph{Race car vehicle dynamics}.\hskip 1em
  plus 0.5em minus 0.4em\relax SAE International, 1995.

\bibitem{TG:92}
T.~D. Gillespie, \emph{Fundamentals of Vehicle Dynamics}.\hskip 1em plus 0.5em
  minus 0.4em\relax SAE, Warrendale, 1992.

\bibitem{JYW:01}
J.~Y. Wong, \emph{Theory of ground vehicles}.\hskip 1em plus 0.5em minus
  0.4em\relax John Wiley \& Sons, 2001.

\bibitem{UK-LN:05}
U.~Kiencke and L.~Nielsen, \emph{Automotive Control Systems for Engine,
  Driveline, and Vehicle}.\hskip 1em plus 0.5em minus 0.4em\relax Springer
  Verlag, 2005.

\bibitem{RR:06}
R.~Rajamani, \emph{Vehicle Dynamics and Control}.\hskip 1em plus 0.5em minus
  0.4em\relax Springer, 2006.

\bibitem{EV-PT-JL:08}
E.~Velenis, P.~Tsiotras, and J.~Lu, ``Optimality properties and driver input
  parameterization for trail-braking cornering,'' \emph{European Journal of
  Control}, vol.~14, no.~4, pp. 308--320, 2008.

\bibitem{EV-EF-PT:10}
E.~Velenis, E.~Frazzoli, and P.~Tsiotras, ``Steady-state cornering equilibria
  and stabilization for a vehicle during extreme operating conditions,''
  \emph{International Journal of Vehicle Autonomous Systems}, vol.~8, no. 2--4,
  2010.

\bibitem{EO-SH-HT-SD:98}
E.~Ono, S.~Hosoe, H.~D. Tuan, and S.~Doi, ``Bifurcation in vehicle dynamics and
  robust front wheel steering control,'' \emph{IEEE Transactions on Control
  Systems Technology}, vol.~6, no.~3, pp. 412--420, 1998.

\bibitem{MA:06}
M.~Abdulrahim, ``On the dynamics of automobile drifting,'' in \emph{SAE
  Technical Paper 2006-01-1019}, 2006.

\bibitem{EF:08}
E.~Frazzoli, ``Discussion on `{O}ptimality properties and driver input
  parameterization for trail-braking cornering','' \emph{European Journal of
  Control}, vol.~14, no.~4, pp. 321--324, July-August 2008.

\bibitem{YI-LI-LU-LIU:11}
J.~Yi, J.~Li, J.~Lu, and Z.~Liu, ``On the stability and agility of aggressive
  vehicle maneuvers: A pendulum-turn maneuver example,'' \emph{IEEE
  Transactions on Control Systems Technology}, vol.~PP, no.~99, pp. 1 --14,
  2011.

\bibitem{DC-RS-PS:00a}
D.~Casanova, R.~Sharp, and P.~Symonds, ``\BIBforeignlanguage{English}{Minimum
  time manoeuvring: The significance of yaw inertia},''
  \emph{\BIBforeignlanguage{English}{Vehicle System Dynamics}}, vol.~34, no.~2,
  pp. 77--115, 2000.

\bibitem{FK-JF-CK-SS:11}
F.~Kehrle, F.~J.V., C.~Kirches, and S.~Sager, ``Optimal control of formula 1
  race cars in a vdrift based virtual environment,'' in \emph{IFAC World
  Congress Milan}, 2011.

\bibitem{DC-RS-PS:00b}
D.~Casanova, R.~Sharp, and P.~Symonds, ``On minimum time optimisation of
  formula one cars: the influence of vehicle mass,'' in \emph{In Proceedings of
  International Symposium on Advanced Vehicle Control}, Ann Arbor, Michigan,
  2000.

\bibitem{DC-RS-PS:02}
------, ``On the optimisation of the longitudinal location of the mass centre
  of a formula one car for two circuits,'' in \emph{In Proceedings of
  International Symposium on Advanced Vehicle Control}, Hiroshima, Japan, 2002.

\bibitem{And-Iagn:10}
S.~Anderson, S.~Peters, T.~Pilutti, and K.~Iagnemma, ``An optimal-control-based
  framework for trajectory planning, threat assessment, and semi-autonomous
  control of passenger vehicles in hazard avoidance scenarios,''
  \emph{International Journal of Vehicle Autonomous Systems}, vol.~8, pp.
  190--216, 2010.

\bibitem{PF-FB-JA-HET-DH:07a}
P.~Falcone, F.~Borrelli, J.~Asgari, H.~E. Tseng, and D.~Hrovat, ``Predictive
  active steering control for autonomous vehicle systems,'' \emph{IEEE
  Transactions on Control Systems Technology}, vol.~15, no.~3, pp. 566--580,
  2007.

\bibitem{PF-FB-JA-HET-DH:07b}
P.~Falcone, M.~Tufo, F.~Borrelli, J.~Asgari, and H.~Tseng, ``A linear time
  varying model predictive control approach to the integrated vehicle dynamics
  control problem in autonomous systems,'' in \emph{IEEE Conf. on Decision and
  Control}, New Orleans, LA, 2007, pp. 2980 -- 2985.

\bibitem{PM-JH:09}
P.~MacMillin and J.~Hauser, ``Development and exploration of a rigid motorcycle
  model,'' in \emph{{IEEE} Conf.\ on Decision and Control}, Shanghai, China,
  Dec. 2009, pp. 4396--4401.

\bibitem{AS-JH-AB:12}
A.~Saccon, J.~Hauser, and A.~Beghi, ``Trajectory exploration of a rigid
  motorcycle model,'' \emph{IEEE Transactions on Control Systems Technology},
  vol.~20, no.~2, pp. 424 --437, march 2012.

\bibitem{ELA-KG:90}
E.~L. Allgower and K.~Georg, \emph{Numerical continuation methods: an
  introduction}.\hskip 1em plus 0.5em minus 0.4em\relax Springer-Verlag New
  York, Inc., 1990.

\bibitem{GG:00}
G.~Genta, \emph{Motor vehicle dynamics: modeling and simulation}.\hskip 1em
  plus 0.5em minus 0.4em\relax World Scientific, 2006.

\bibitem{JH:02}
J.~Hauser, ``A projection operator approach to the optimization of trajectory
  functionals,'' in \emph{{IFAC} {W}orld {C}ongress}, Barcelona, 2002.

\bibitem{HBP:02}
H.~B. Pacejka, \emph{Tire and Vehicle dynamics}.\hskip 1em plus 0.5em minus
  0.4em\relax Butterworth Heinemann, 2002.

\bibitem{RF-AB-GN:05}
R.~Frezza, A.~Beghi, and G.~Notarstefano, ``Almost kinematic reducibility of a
  car model with small lateral slip angle for control design,'' in \emph{IEEE
  International Symposium on Industrial Electronics}, Dubrovnik, June 2005, pp.
  343--348.

\bibitem{CC:95}
C.~Canudas~de Wit, H.~Olsson, K.~J. Astrom, and P.~Lischinsky, ``A new model
  for control of systems with friction,'' \emph{IEEE Transactions on Automatic
  Control}, vol. 40(3), pp. 419--425, 1995.

\bibitem{CC:03}
C.~Canudas~de Wit, P.~Tsiotras, E.~Velenis, M.~Basset, and G.~Gissinger,
  ``Dynamic friction models for road/tire longitudinal interaction,''
  \emph{Vehicle System Dynamics}, vol.~39, no.~3, pp. 189--226, 2003.

\bibitem{MWH:97}
M.~W. Hirsch, \emph{\BIBforeignlanguage{English}{Differential topology}}.\hskip
  1em plus 0.5em minus 0.4em\relax Springer, New York, 1997.

\bibitem{JH-DGM:98}
J.~Hauser and D.~G. Meyer, ``The trajectory manifold of a nonlinear control
  system,'' in \emph{{IEEE} Conf.\ on Decision and Control}, vol.~1, December
  1998, pp. 1034--1039.

\end{thebibliography}


\end{document}